\DeclareMathOperator{\dom}{dom}
\DeclareMathOperator{\range}{range}
\DeclareMathOperator{\otp}{otp}
\DeclareMathOperator{\len}{len}
\DeclareMathOperator{\crit}{crit}
\DeclareMathOperator{\rank}{rank}
\DeclareMathOperator{\cf}{cf}
\DeclareMathOperator{\stem}{stem}
\DeclareMathOperator{\Col}{Col}
\DeclareMathOperator{\E}{\mathbb{E}}
\DeclareMathOperator{\image}{\text{"}}
\newcommand{\chang}{\twoheadrightarrow}
\newcommand{\Lowenheim}{L\"{o}wenheim}
\newcommand{\Erdos}{Erd\H{o}s}
\newcommand{\ZFC}{{\rm ZFC\xspace}}
\newcommand{\GCH}{{\rm GCH\xspace}}
\newcommand{\PCF}{{\rm PCF\xspace}}
\newcommand{\guidinggeneric}{\mathcal{G}}
\newtheorem{theorem}{Theorem}
\newaliascnt{example}{theorem}
\newaliascnt{fact}{theorem}
\newaliascnt{corollary}{theorem}
\newtheorem{corollary}[corollary]{Corollary}
\newaliascnt{lemma}{theorem}
\newtheorem{lemma}[lemma]{Lemma}
\newaliascnt{claim}{theorem}
\newtheorem{claim}[claim]{Claim}
\newtheorem{remark}[theorem]{Remark}
\newtheorem*{example*}{Example}
\newtheorem*{question}{Question}
\theoremstyle{definition}
\newaliascnt{definition}{theorem}
\newtheorem{definition}[definition]{Definition}
\begin{document}
\title{On the consistency of local and global versions of Chang's Conjecture}
\author{Monroe Eskew}
\address{Monroe Eskew \\
Kurt G\"odel Research Center, University of Vienna \\
W\"ahringer Strasse 25, 1090 Vienna \\
Austria}
\email{monroe.eskew@univie.ac.at}
\author{Yair Hayut}
\address{Yair Hayut \\
Einstein Institute of Mathematics, Hebrew University of Jerusalem \\
Jerusalem, 91904, Israel}
\email{yair.hayut@math.huji.ac.il}

\begin{abstract}
We show that for many pairs of infinite cardinals $\kappa > \mu^+ > \mu$, $(\kappa^{+}, \kappa)\chang (\mu^+, \mu)$ is consistent relative to the consistency of a supercompact cardinal. We also show that it is consistent, relative to a huge cardinal that $(\kappa^{+}, \kappa)\chang (\mu^+, \mu)$ for every successor cardinal $\kappa$ and every $\mu < \kappa$, answering a question of Foreman.
\end{abstract}
\maketitle
\section{Introduction}

The Downwards \Lowenheim-Skolem-Tarski Theorem states that every model $M$ for a language $\mathcal{L}$, $|M| = \kappa \geq \aleph_0$, and cardinal $|\mathcal{L}| + \aleph_0 \leq \mu \leq \kappa$ there is an elementary submodel $M^\prime \prec M$, of cardinality $\mu$. Informally speaking, this means that first order logic (with countable language) cannot distinguish between infinite cardinals.

Second order logic, in which we are allowed to quantify over subsets of the structure, is strong enough to distinguish between different infinite cardinals. For example, it is easy to express the statement ``There are exactly $\aleph_7$ elements in the structure'' in second order logic. By a theorem of Magidor \cite{Magidor1971}, a variant of the Downwards \Lowenheim-Skolem Theorem for full second order logic can hold only above a supercompact cardinal. In fact, there is a specific $\Pi^1_1$-formula $\Phi$ such that if $\kappa$ is a cardinal, and for every model $M$ of cardinality at least $\kappa$ that models $\Phi$ there is an elementary submodel $M^\prime$, $|M^\prime| < \kappa$, $M^\prime \models \Phi$, then there is a supercompact cardinal $\kappa_0 \leq \kappa$.

Thus, it is natural to ask how strong can be a fraction of the second order logic such that it still does not distinguish between ``most'' pairs of infinite cardinals. One candidate is first order logic enriched with the Chang's Quantifier:
\begin{definition}
Let $M$ be a model. We write \[Q x,\, \varphi(x, \vec{p})\]
if \[|\{x \in M : M \models \varphi(x, \vec{p})\}| = |M|\]
\end{definition}
We let $L(Q)$ be first order logic enriched with the quantifier $Q$. We write $M^\prime \prec_{Q} M$ if $M^\prime$ is an elementary submodel of $M$ relative to all formulas in $L(Q)$.
\begin{lemma}
The following are equivalent for infinite cardinals $\mu < \kappa$:
\begin{enumerate}
\item For every model $M$ of cardinality $\kappa^{+}$ there is an $L(Q)$-elementary submodel of cardinality $\mu^+$.
\item For every model $M$ for a language $\mathcal{L}$ that contains a predicate $A$, if $|M| = \kappa^+$ and $|A| = \kappa$, then there is $M^\prime\prec M$ with $|M^\prime| = \mu^+, |M^\prime \cap A| = \mu$.
\item For every function $f\colon (\kappa^{+})^{<\omega} \to \kappa$ there is a set $X \subseteq \kappa^{+}$, $|X| = \mu^+$, such that $|f \image X^{<\omega}| \leq \mu$.
\end{enumerate}
\end{lemma}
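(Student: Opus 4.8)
The plan is to prove the cycle $(1)\Rightarrow(2)\Rightarrow(3)\Rightarrow(1)$. For $(1)\Rightarrow(2)$, suppose $M$ is a structure in a language $\mathcal L$ with a predicate $A$, $|M|=\kappa^+$ and $|A|=\kappa$. Since $\kappa<\kappa^+$ we have $M\models\neg Qx\,A(x)$; applying $(1)$ gives $M'\prec_Q M$ with $|M'|=\mu^+$, and $L(Q)$-elementarity yields $M'\models\neg Qx\,A(x)$, i.e.\ $|A\cap M'|\neq|M'|$, hence $|A\cap M'|\le\mu$. (The passage from $|A\cap M'|\le\mu$ to $=\mu$ is a standard normalization; I would either invoke it or simply work throughout with the ``$\le\mu$'' form of $(2)$, which is what the other two clauses naturally produce.)

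For $(2)\Rightarrow(3)$, given $f\colon(\kappa^{+})^{<\omega}\to\kappa$ I would apply $(2)$ to the structure $M=(\kappa^+;\kappa,(f\restriction(\kappa^+)^n)_{n<\omega})$ with predicate $A:=\kappa$, obtaining $M'\prec M$ with $|M'|=\mu^+$ and $|M'\cap\kappa|\le\mu$; letting $X$ be the universe of $M'$, elementarity makes $X$ closed under each $f\restriction(\kappa^+)^n$, so $f\image X^{<\omega}\subseteq X\cap\range(f)\subseteq X\cap\kappa$ and $|f\image X^{<\omega}|\le\mu$.

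The substantial direction is $(3)\Rightarrow(1)$. Given a structure $M$, I would first reduce to the case that $M$ has universe $\kappa^+$ and a countable language (the latter by a routine \Lowenheim--Skolem/reflection argument, passing to a countable fragment), and then expand $M$ by Skolem machinery for $L(Q)$: (a) ordinary Skolem functions $h_n\colon(\kappa^+)^{<\omega}\to\kappa^+$; (b) for each $L(Q)$-formula $\psi(x,\vec y)$ an injection $e_\psi\colon\kappa^+\times(\kappa^+)^{<\omega}\to\kappa^+$ such that $\xi\mapsto e_\psi(\xi,\vec p)$ enumerates $\kappa^+$-many solutions of $\psi(\cdot,\vec p)$ whenever $M\models Qx\,\psi(x,\vec p)$; and (c) for each $\psi$ a function $g_\psi\colon\kappa\times(\kappa^+)^{<\omega}\to\kappa^+$ such that $\alpha\mapsto g_\psi(\alpha,\vec p)$ maps onto the solution set of $\psi(\cdot,\vec p)$ whenever $M\models\neg Qx\,\psi(x,\vec p)$ (possible since then that set has size $\le\kappa$). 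The point where $(3)$ is used: define a single master function $f\colon(\kappa^+)^{<\omega}\to\kappa$ which, after decoding its argument by a length/pairing encoding (padding tuples as needed, so one function carries all the countably many cases), outputs, for given $\psi$, given Skolem terms $\sigma,\tau_1,\dots,\tau_r$, and given raw parameters $\vec p$, the least $\alpha<\kappa$ with $g_\psi\bigl(\alpha,\tau_1^M(\vec p),\dots,\tau_r^M(\vec p)\bigr)=\sigma^M(\vec p)$ if one exists (and $0$ otherwise). Apply $(3)$ to get $X\subseteq\kappa^+$ with $|X|=\mu^+$ and $|f\image X^{<\omega}|\le\mu$, and set $N:=\operatorname{Hull}^M(X)$, so $|N|=\mu^+$. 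Finally I would check $N\prec_Q M$ by the Tarski--Vaught criterion for $L(Q)$: closure under the $h_n$ handles the first-order clauses; closure under the $e_\psi$, together with $|N|=\mu^+$ and injectivity, handles upward preservation of $Q$; and for downward preservation of $\neg Q$, if $M\models\neg Qx\,\psi(x,\vec q)$ with $\vec q\in N^{<\omega}$, then writing the entries of $\vec q$ and any solution $x\in N$ as term-values $\tau_i^M(\vec p)$, $\sigma^M(\vec p)$ on a common $\vec p\in X^{<\omega}$, the least witnessing ordinal lies in $f\image X^{<\omega}$ and recovers $x$ from the fixed $\vec q$, so the solution set meets $N$ in at most $|f\image X^{<\omega}|\le\mu<|N|$ points, giving $N\models\neg Qx\,\psi(x,\vec q)$.

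I expect the main obstacle to be exactly the tension in $(3)\Rightarrow(1)$ between needing $N$ closed under the Skolem functions (for genuine $L(Q)$-elementarity) and needing to bound how much of the ``small'' $Q$-sets $N$ absorbs: closing an arbitrary set of size $\mu^+$ under the Skolem functions can a priori push such a trace up to $\mu^+$. The device that resolves it is to fold the Skolem \emph{terms} into the master function $f$, so that the single application of $(3)$ bounds the relevant ordinals not just over $X$ but over every term-value over $X$, i.e.\ over all of $N$. The remaining ingredients — the reduction to a countable language, the padding encoding letting one $f$ do the work of countably many, and the $\le\mu$-versus-$=\mu$ normalization in $(2)$ — are routine.
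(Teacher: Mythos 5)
The paper states this lemma without proof (it defers to Chang--Keisler for basic facts about Chang's Conjecture), so there is no in-paper argument to compare against; judged on its own, your proof is the standard one and is essentially correct. The cycle $(1)\Rightarrow(2)\Rightarrow(3)\Rightarrow(1)$ works as you describe, and in $(3)\Rightarrow(1)$ you correctly isolate the key point: the master function must be evaluated at codes of Skolem \emph{terms} together with raw parameters from $X$, so that the single application of $(3)$ bounds the relevant ordinals over all of $N=\operatorname{Hull}^M(X)$ and not merely over $X$.

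Two caveats. First, the upgrade from $|M'\cap A|\le\mu$ to $|M'\cap A|=\mu$ in $(1)\Rightarrow(2)$ is the one place where you gesture at a ``standard normalization'' without supplying it, and since clause (2) as stated demands equality you should include the short argument: identify the universe with $\kappa^+$, fix injections $c_\alpha\colon\alpha\to A$ for $\alpha<\kappa^+$ (possible since $|A|=\kappa$), and add the binary function $(\alpha,\beta)\mapsto c_\alpha(\beta)$ to the structure before applying $(1)$. Then every initial segment $M'\cap\alpha$ with $\alpha\in M'$ injects into $M'\cap A$, so if $|M'\cap A|=\nu<\mu$ then $\otp(M')\le\nu^+\le\mu$, contradicting $|M'|=\mu^+$; hence $|M'\cap A|=\mu$. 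Second, clause (1) must simply be \emph{read} with a countable language: the parenthetical ``reduce to a countable fragment'' is not an available reduction for genuinely uncountable languages (a structure with $\kappa^+$ pairwise distinct constants has no proper elementary submodel at all), so this is a convention on the statement rather than a step of the proof. A last cosmetic point: you should only require $\xi\mapsto e_\psi(\xi,\vec p)$ to be injective for each fixed $\vec p$ with $M\models Qx\,\psi(x,\vec p)$; global injectivity of $e_\psi$ is neither needed for the upward $Q$-clause nor in general achievable.
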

The second assertion is called \emph{Chang's Conjecture,} and it is denoted by:
\[(\kappa^+, \kappa) \chang (\mu^+, \mu).\]
For basic facts about Chang's Conjecture see, for example, \cite[section 7.3]{ChangKeisler1990}.

Note that if $M$ is a model of cardinality $\kappa$ of enough set theory, $M^\prime \prec_Q M$, and $|M^\prime| = \mu$, then $\kappa$ is a successor cardinal iff $\mu$ is a successor cardinal. Thus the restriction in the above lemma to successor cardinals is unavoidable.

This is not the only instance of Chang's Conjecture which provably fails. For example, assuming \GCH, for every singular cardinal $\kappa$, and every $\mu$ such that $\mu > \cf \kappa$ and $\cf \mu \neq \cf \kappa$, $(\kappa^+, \kappa) \not\chang (\mu^+, \mu)$, \cite[Lemma 1]{LevinskiMagidorShelah1990}. Without assuming $\GCH$ one can prove weaker results using Shelah's $\PCF$ mechanism: for example $(\aleph_{\omega+1},\aleph_\omega)\chang (\aleph_{n+1}, \aleph_n)$, implies that for every scale on $\aleph_\omega$ there are stationarily many bad points of cofinality $\omega_{n+1}$ (see \cite{ForemanMagidor1997}). It is provable that for every scale on $\aleph_{\omega}$ there is a club in which every ordinal of cofinality $\geq \omega_4$ is good (see, for example, \cite{AbrahamMagidor2010}), and therefore $(\aleph_{\omega+1},\aleph_\omega)\not\chang (\aleph_{n+1}, \aleph_n)$ for every $n \geq 3$. The consistency of the cases $(\aleph_{\omega+1},\aleph_\omega)\chang (\aleph_2, \aleph_1)$ and $(\aleph_{\omega+1},\aleph_\omega)\chang (\aleph_3, \aleph_2)$ is completely open.

There are many open questions about Chang's Conjecture at successors of singular cardinals. In this paper we will concentrate on the questions about Chang's Conjecture at successors of regular cardinals.
In Section \ref{sec: local cc}, we will show how to force instances of Chang's Conjecture of the form $(\kappa^{+},\kappa)\chang (\mu^+,\mu)$, where $\kappa > \mu^+$ for various values of $\kappa$ and $\mu$ from large cardinals weaker than a supercompact cardinal. This improves the known upper bounds for the consistency strength of those instances.

In Section \ref{sec: global cc} we will show that it is consistent relative to a huge cardinal that for every successor $\kappa$ and every $\mu < \kappa$, $(\kappa^+,\kappa)\chang (\mu^+,\mu)$. This answers a question of Foreman from \cite[Section 12, Question 7]{Foreman2010ideals}.

In Section \ref{sec: segment cc} we will construct a model in which for all $m < n < \omega$, $(\aleph_{n+1}, \aleph_n)\chang(\aleph_{m+1}, \aleph_m)$ and $(\aleph_{\omega+1},\aleph_{\omega})\chang (\aleph_1, \aleph_0)$.

Our notations are mostly standard. We work in \ZFC, and specify any usage of large cardinals. For basic facts about forcing see \cite{Jech2003}. For standard facts and definitions about large cardinals, see \cite{Kanamori2009}.
\section{Preliminaries and preservation lemmas}\label{sec: preliminaries}
We begin with some standard notations and definitions.
\begin{definition}[Levy collapse]
Let $\kappa < \lambda$ be cardinals. $\Col(\kappa, \lambda)$ is the set of all partial functions, $f\colon \kappa \to \lambda$, $|\dom f| < \kappa$, ordered by reverse inclusion.
\end{definition}
$\Col(\kappa,\lambda)$ adds a surejction from $\kappa$ onto $\lambda$. If $\kappa$ is regular cardinal, this forcing is $\kappa$-closed. The forcing $\Col(\kappa, {<}\lambda)$ is the product with support ${<}\kappa$ of $\Col(\kappa, \alpha),\ \alpha < \lambda$. If $\lambda$ is inaccessible then this forcing is $\lambda$-c.c.

\begin{definition}[Easton Collapse, Shioya]\cite{Shioya2011}
Let $\kappa < \lambda$ be cardinals. $\E(\kappa,\lambda)$ is the Easton-support product of $\Col(\kappa, \alpha)$, over inaccessible $\alpha \in (\kappa,\lambda)$.
\end{definition}
\begin{definition}
A partial order $\mathbb P$ has \emph{precaliber} $\kappa$ if for every $A \subseteq \mathbb P$ of size $\kappa$, there is $B \subseteq A$ of size $\kappa$ that generates a filter.
\end{definition}
\begin{lemma}
Assume that $\kappa$ is regular and $\lambda$ is Mahlo. Then $\E(\kappa,\lambda)$ has precaliber $\lambda$ and is $\kappa$-closed.
\end{lemma}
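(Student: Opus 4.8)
**Plan for proving $\mathbb{E}(\kappa,\lambda)$ has precaliber $\lambda$ and is $\kappa$-closed when $\kappa$ is regular and $\lambda$ is Mahlo.**

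The $\kappa$-closure is routine and should be dispensed with first. Each factor $\Col(\kappa,\alpha)$ is $\kappa$-closed since $\kappa$ is regular, and an Easton-support product of $\kappa$-closed forcings is $\kappa$-closed: given a descending sequence of length $<\kappa$, take the coordinatewise union, noting that the support of the result is a union of $<\kappa$ sets each of size $<\kappa$ below any inaccessible, hence still Easton (this uses that the inaccessibles below $\lambda$ are unbounded and that $\kappa$ is regular).

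For precaliber $\lambda$, suppose $A = \{p_\xi : \xi < \lambda\} \subseteq \mathbb{E}(\kappa,\lambda)$. The plan is to thin out $A$ to a subset of size $\lambda$ whose conditions are pairwise compatible in a strong enough way to generate a filter. First, since $\lambda$ is Mahlo, it is in particular regular and inaccessible, and the set $C$ of inaccessible $\delta < \lambda$ such that $\{\xi < \delta : \supp(p_\xi) \subseteq \delta\}$ is unbounded in... more carefully: let $S$ be the set of inaccessible $\delta<\lambda$ such that $\supp(p_\xi)\subseteq\delta$ for a stationary (equivalently, by Mahloness, unbounded-in-the-right-sense) set of $\xi$; I would instead argue as follows. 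For each $\xi$, $\supp(p_\xi)$ is a bounded subset of $\lambda$ of size $<\kappa<\lambda$; by Fodor-style reasoning on the regular cardinal $\lambda$, there is a stationary $T\subseteq\lambda$ and a fixed $\delta<\lambda$ (which, refining along the club of inaccessibles, we may take inaccessible) with $\supp(p_\xi)\subseteq\delta$ for all $\xi\in T$. Then $\Col(\kappa,{<}\delta)=\prod_{\alpha<\delta}^{<\kappa}\Col(\kappa,\alpha)$ has size $\delta^{<\kappa}=\delta<\lambda$ since $\delta$ is inaccessible, so there is a single condition $q\in\prod_{\alpha<\delta}\Col(\kappa,\alpha)$ and a subset $A'\subseteq\{p_\xi:\xi\in T\}$ of size $\lambda$ with $p_\xi\restriction\delta = q$ for all $p_\xi\in A'$; but since $\supp(p_\xi)\subseteq\delta$, this means every element of $A'$ equals $q$ — so in fact $A'$ is a single condition repeated, which trivially generates a filter.

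The main obstacle is making the counting in the previous paragraph rigorous: one must check that $|\Col(\kappa,{<}\delta)| = \delta$ for $\delta$ inaccessible above $\kappa$ (supports of size $<\kappa$ in a product of $\delta$ posets each of size $\delta$, using $\delta^{<\delta}=\delta$), and one must verify that the map $\xi\mapsto\supp(p_\xi)$ genuinely pins down to a bounded set on a stationary set — this is where regularity of $\lambda$ (a consequence of Mahlo) is used, via the pressing-down lemma applied to $\xi\mapsto\sup\supp(p_\xi)$, after first discarding the $\lambda$-many $\xi$ for which this sup is at least $\xi$ using that the sups are $<\lambda$ and $\lambda$ is regular. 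Once the sup is stabilized below some $\delta_0$, increase $\delta_0$ to an inaccessible $\delta$ (possible since the inaccessibles are unbounded in the Mahlo $\lambda$), and the size computation for $\Col(\kappa,{<}\delta)$ finishes the argument. An alternative, perhaps cleaner, route avoids the degenerate "repeated condition" phrasing: stabilize the support to a fixed set $s$ of size $<\kappa$, then stabilize the restriction $p_\xi\restriction s\in\prod_{\alpha\in s}\Col(\kappa,\alpha)$, a set of size $<\kappa<\lambda$, to get $\lambda$-many identical conditions — but either way the core point is that Easton support plus Mahlo-ness forces $\lambda$-many conditions to agree outright, which is far stronger than generating a filter.
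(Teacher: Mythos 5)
Your $\kappa$-closure argument is essentially fine (modulo the phrasing: an Easton support satisfies $|\supp(p)\cap\delta|<\delta$ for every regular $\delta$, not ``size $<\kappa$ below any inaccessible''; but a union of $<\kappa$ many Easton sets is still Easton, so coordinatewise unions work). The precaliber argument, however, has a genuine gap at its central step. First, a condition in $\E(\kappa,\lambda)$ does \emph{not} have support of size $<\kappa$ --- you have conflated the Easton-support product with $\Col(\kappa,{<}\lambda)$; Easton support only gives $|\supp(p_\xi)|<\lambda$, hence boundedness in $\lambda$. Second, and more seriously, the pressing-down step fails: the function $\xi\mapsto\sup\supp(p_\xi)$ need not be regressive on any stationary set, so Fodor does not apply to it. For a concrete counterexample, let $p_\xi$ have support $\{\alpha_\xi\}$ where $\alpha_\xi$ is the least inaccessible above $\xi$; then $\sup\supp(p_\xi)>\xi$ for \emph{every} $\xi$, so your proposed repair (``discarding the $\xi$ for which this sup is at least $\xi$'') discards the entire family, and no fixed $\delta<\lambda$ bounds $\lambda$ many of the supports. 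Consequently your final conclusion --- that $\lambda$ many conditions agree outright --- is simply false in general (in the example the $p_\xi$ are pairwise distinct), and the degenerate ``repeated condition'' picture cannot be the content of the lemma. (Also, the inaccessibles below a Mahlo cardinal form a stationary set, not a club; a limit of inaccessibles of countable cofinality is not inaccessible.)

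The correct argument is a $\Delta$-system argument in which only the part of each support \emph{below its own index} is stabilized. Pass first to the club $C$ of $\delta<\lambda$ with $\supp(p_\xi)\subseteq\delta$ for all $\xi<\delta$, and let $S$ be the (stationary, by Mahloness) set of inaccessible $\delta\in C$. For $\delta\in S$ the Easton condition gives $|\supp(p_\delta)\cap\delta|<\delta$, so $\supp(p_\delta)\cap\delta$ is bounded below $\delta$ and $\delta\mapsto\sup(\supp(p_\delta)\cap\delta)$ \emph{is} regressive on $S$; Fodor yields a stationary $S'\subseteq S$ and a fixed $\beta<\lambda$ with $\supp(p_\delta)\cap\delta\subseteq\beta+1$ for $\delta\in S'$. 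Since there are at most $2^{|\beta|}<\lambda$ possible values of $p_\delta\restriction(\beta+1)$ (here inaccessibility of $\lambda$ does the counting you attempted), one may shrink $S'$ to $S''$ of size $\lambda$ on which $p_\delta\restriction(\beta+1)$ is constant. For $\delta<\delta'$ in $S''$ one has $\supp(p_\delta)\subseteq\delta'$ and $\supp(p_{\delta'})\cap\delta'\subseteq\beta+1$, so the supports meet only inside $\beta+1$, where the conditions coincide; hence any finitely many members of $\{p_\delta:\delta\in S''\}$ have a common lower bound (their union), and the family generates a filter. The conditions are pairwise compatible with a common root, not equal.
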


The following absorption lemmas will be important in our use of huge cardinal embeddings.

\begin{lemma}[Folklore]
\label{folk}
If $\mathbb P$ is $\kappa$-closed and forces $|\mathbb P| = \kappa$, then $\mathbb P$ is forcing-equivalent to $\Col(\kappa,|\mathbb P|)$.
\end{lemma}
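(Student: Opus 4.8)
The plan is to show that $\mathbb P$ and $\Col(\kappa,\lambda)$ have isomorphic Boolean completions, where $\lambda=|\mathbb P|$; since the Boolean completion is recovered from any dense suborder, it suffices to produce dense suborders $T\subseteq\mathbb P$ and $E\subseteq\Col(\kappa,\lambda)$ together with an order isomorphism $T\cong E$. Both $T$ and $E$ will be ``$\lambda$-branching $\kappa$-closed trees of height $\kappa$'', and the real content is that up to isomorphism there is only one such tree.

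I would begin with three preliminary observations; here we may assume $\lambda>\kappa$, which is the case of interest (the hypothesis forces $\lambda\ge\kappa$; when $\lambda=\kappa$ one falls back on the familiar fact that a nowhere-trivial $\kappa$-closed forcing of size $\kappa$ with $\kappa^{<\kappa}=\kappa$ is equivalent to $\Col(\kappa,\kappa)$). We may add a greatest element $1_{\mathbb P}$. Fixing a $\mathbb P$-name $\dot f$ for a surjection of $\kappa$ onto $\lambda$, below every $p\in\mathbb P$ there is an antichain of size $\lambda$: choosing $p_\gamma\le p$ and $i_\gamma<\kappa$ with $p_\gamma\Vdash\dot f(i_\gamma)=\check\gamma$ for $\gamma<\lambda$, a pigeonhole argument (using $\kappa<\lambda$) yields $\lambda$-many $\gamma$ with a common value $i_\gamma$, and the corresponding $p_\gamma$ are pairwise incompatible; refining any maximal antichain in this way and re-maximalizing shows that below every condition there is a maximal antichain of size \emph{exactly} $\lambda$. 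Secondly, $\lambda^{<\kappa}=\lambda$: for $\delta<\kappa$, recursively choose conditions $p_s$ ($s\in{}^{\le\delta}\lambda$) with $p_{\emptyset}=1_{\mathbb P}$, with $\langle p_{s^{\frown}\langle\gamma\rangle}:\gamma<\lambda\rangle$ an antichain of size $\lambda$ below $p_s$ at successor steps, and with $p_s$ a lower bound of $\langle p_{s\restriction\alpha}:\alpha<\len s\rangle$ at limit steps (by $\kappa$-closure); then $\{p_s:s\in{}^{\delta}\lambda\}$ is an antichain, so $\lambda^{|\delta|}\le|\mathbb P|=\lambda$. Thirdly, therefore, $|\Col(\kappa,\lambda)|=\lambda^{<\kappa}=\lambda$, and the conditions of $\Col(\kappa,\lambda)$ whose domain is an ordinal form a dense suborder that is isomorphic to the tree $({}^{<\kappa}\lambda,\supseteq)$ and that visibly shares the two properties just isolated (below every node a maximal antichain of size $\lambda$; closure under increasing unions of chains of length $<\kappa$).

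The main step is a recursion on levels $<\kappa$ building $T\subseteq\mathbb P$, a matching tree $E\subseteq({}^{<\kappa}\lambda,\supseteq)$ dense in $\Col(\kappa,\lambda)$, and an isomorphism $\pi\colon T\to E$ simultaneously. At a successor level, the children of a node $t$ of $T$ are a maximal antichain of size $\lambda$ below $t$ (available by the first observation), sent bijectively by $\pi$ to a maximal antichain of size $\lambda$ below $\pi(t)$; at a limit level $\delta<\kappa$, each branch of $T\restriction\delta$ is descended to a lower bound via $\kappa$-closure and a maximal antichain of size $\lambda$ is hung there, mirrored on the $E$-side---and the bookkeeping remains feasible because $\lambda^{<\kappa}=\lambda$ caps the number of branches of length $\delta$ by $\lambda$, so each level has exactly $\lambda$ nodes. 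Running alongside the level recursion is a bookkeeping through enumerations $\mathbb P=\langle q_\xi:\xi<\lambda\rangle$ and $\Col(\kappa,\lambda)=\langle r_\xi:\xi<\lambda\rangle$, distributed over the $\lambda$ nodes of each level, arranged so that the descent of any $q\in\mathbb P$ through $T$ eventually reaches a node all of whose children are either $\le q$ or incompatible with $q$; one such child is then $\le q$, so $T$ is dense in $\mathbb P$, and symmetrically $E$ is dense in $\Col(\kappa,\lambda)$. As $\pi\colon T\to E$ is an isomorphism of dense suborders, $\mathbb P$ and $\Col(\kappa,\lambda)$ have the same Boolean completion, i.e.\ are forcing-equivalent.

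I expect the limit stages to be the crux. A purely level-by-level construction can fail to give a dense suborder: a condition may lie below every node of an entire cofinal branch of $T$ without lying below any single node of $T$, so that it is never refined. The recursion at limit levels must therefore be coordinated with the bookkeeping so that the descent of every prescribed condition is ``caught'' by a splitting node before level $\kappa$---it is natural here to carry out the construction inside the Boolean completion of $\mathbb P$, where branches of bounded length have canonical greatest lower bounds---and the \emph{same} bookkeeping schedule has to be run on the $\Col(\kappa,\lambda)$-side so that $\pi$ stays a level-preserving isomorphism. Verifying that these requirements can be met simultaneously is essentially the whole proof; the rest is the routine tree-and-forcing manipulation that makes the statement ``folklore''.
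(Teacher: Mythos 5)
The paper gives no proof of this lemma at all --- it is cited as folklore --- so there is nothing internal to compare against. Your strategy (isomorphic $\lambda$-branching, $\kappa$-closed dense subtrees of $\mathbb P$ and of $\Col(\kappa,\lambda)$, where $\lambda=|\mathbb P|$) is the standard folklore argument, and the two preliminary computations you isolate are exactly the non-obvious content: the $\lambda$-sized antichain below every condition via a name for a surjection $\kappa\to\lambda$, and the derived antichain-tree argument giving $\lambda^{<\kappa}=\lambda$, are both correct. (One small caveat: the pigeonhole in the first observation needs $\cf(\lambda)>\kappa$ or $\lambda$ a successor; this holds in every application in the paper, where $\lambda$ is inaccessible or a successor cardinal.)

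The one step you defer --- the limit levels --- is a genuine issue, and your diagnosis is right, but it can be closed concretely rather than left as ``essentially the whole proof.'' At a limit level $\delta<\kappa$ you must \emph{not} hang the new antichain below a single chosen lower bound $p_b$ of a branch $b$ of $T\restriction\delta$: a condition $r$ lying below every node of $b$ but incompatible with $p_b$ would then be incompatible with every node of level $\delta$ (nodes hung below a different branch $b'$ sit below some $t'_\alpha\perp t_\alpha$ with $r\le t_\alpha$), so level $\delta$ would fail to be predense and $T$ would not be dense. Instead, for each branch $b$ take a maximal antichain, of size $\lambda$, of the set $L_b=\{p\in\mathbb P : p\le t \text{ for all } t\in b\}$ --- nonempty by $\kappa$-closure, and supporting a $\lambda$-sized antichain by your first observation. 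This is the same as taking a maximal antichain below $\inf b$ computed in $\mathrm{RO}(\mathbb P)$ with all members drawn from $\mathbb P$, which is the precise form of your ``work in the Boolean completion'' remark. With this choice every level of $T$ is a maximal antichain of $\mathbb P$ (given $r$, descend alongside a branch using $\kappa$-closure and meet $L_b$), each level has exactly $\lambda$ nodes since there are between $\lambda$ and $\lambda^{<\kappa}=\lambda$ branches, and on the collapse side one mirrors this by replacing the single node $\bigcup b'$ of a branch $b'$ at level $\delta$ by its $\lambda$ one-point extensions $(\bigcup b')^\frown\langle\gamma\rangle$, $\gamma<\lambda$, so that $\pi$ remains a level-preserving isomorphism. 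The density bookkeeping is then routine exactly as you describe: schedule each $q\in\mathbb P$ to some node $t$ compatible with it and require the children of $t$ to lie in the open dense set $\{p : p\le q \text{ or } p\perp q\}$, so that some child is $\le q$; symmetrically on the $\Col(\kappa,\lambda)$ side. With these two points supplied, your argument is a complete proof.
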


The following notion and lemma are due to Laver, which we will show in somewhat more generality in Lemma \ref{easton projection}.

\begin{definition}Suppose $\mathbb P$ is a partial order and $\dot{\mathbb Q}$ is a $\mathbb P$-name for a partial order.  The termspace forcing for $\dot{\mathbb Q}$, denoted $T(\mathbb P,\dot{\mathbb Q})$, is the set of equivalence classes of $\mathbb P$-names of minimal rank for elements of $\dot{\mathbb Q}$.  Two names $\tau,\sigma$ are equivalent when $1 \Vdash \tau = \sigma$.  The ordering is $[\tau] \leq [\sigma]$ iff $1 \Vdash_{\mathbb P} \tau \leq \sigma$.
\end{definition}

\begin{lemma}If $\dot{\mathbb Q}$ is a $\mathbb P$-name for a partial order, then the identity map from $\mathbb P \times T(\mathbb P,\dot{\mathbb Q})$ to $\mathbb P * \dot{\mathbb Q}$ is a projection.
\end{lemma}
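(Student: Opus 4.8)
The plan is to verify directly that the identity map $\pi$, which sends a condition $(p,[\tau]) \in \mathbb P \times T(\mathbb P,\dot{\mathbb Q})$ to the condition $(p,\tau) \in \mathbb P * \dot{\mathbb Q}$, satisfies the three defining properties of a projection. First one notes that $\pi$ is well defined: if $1 \Vdash \tau = \tau'$ then $(p,\tau)$ and $(p,\tau')$ are literally the same condition of $\mathbb P * \dot{\mathbb Q}$, so the value of $\pi$ does not depend on the chosen representative. Order preservation and preservation of the maximum are then immediate from the definitions of the product ordering and of the ordering of $\mathbb P * \dot{\mathbb Q}$: if $(p,[\tau]) \leq (p',[\sigma])$ then $p \leq p'$ in $\mathbb P$ and $1 \Vdash \tau \leq \sigma$, so in particular $p \Vdash \tau \leq \sigma$, whence $(p,\tau) \leq (p',\sigma)$; and $\pi$ carries the top condition $(1_{\mathbb P},[1_{\dot{\mathbb Q}}])$ to the top condition $(1_{\mathbb P},1_{\dot{\mathbb Q}})$ of $\mathbb P * \dot{\mathbb Q}$.

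The only step with any content is the pullback property. Suppose $(p,[\tau])$ is given together with a condition $(p',\dot q) \leq (p,\tau)$ in $\mathbb P * \dot{\mathbb Q}$, that is, $p' \leq p$ and $p' \Vdash \dot q \leq \tau$. I would take $p'' = p'$ and build $\sigma$ by gluing $\dot q$ and $\tau$ along a maximal antichain: fix a maximal antichain $A'$ among the conditions of $\mathbb P$ incompatible with $p'$, so that $\{p'\} \cup A'$ is a maximal antichain of $\mathbb P$, and (by the mixing lemma for names) let $\sigma$ be a $\mathbb P$-name with $p' \Vdash \sigma = \dot q$ and $a \Vdash \sigma = \tau$ for every $a \in A'$. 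Then $1 \Vdash \sigma \in \dot{\mathbb Q}$, so the equivalence class $[\sigma]$ is a condition of $T(\mathbb P,\dot{\mathbb Q})$; moreover $1 \Vdash \sigma \leq \tau$ (below $p'$ because $p' \Vdash \dot q \leq \tau$, and below each $a \in A'$ by reflexivity), so $(p',[\sigma]) \leq (p,[\tau])$ in the product; and since $p' \Vdash \sigma = \dot q$ we get $\pi(p',[\sigma]) = (p',\sigma) \leq (p',\dot q)$, as required. The same mixing trick shows that $\pi$ has dense range: any $(p,\dot q) \in \mathbb P * \dot{\mathbb Q}$ has $\pi(p,[\sigma]) = (p,\sigma)$ below it (indeed equivalent to it), where $\sigma$ is $\dot q$ below $p$ and $1_{\dot{\mathbb Q}}$ elsewhere.

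The single point requiring a modicum of care is the construction of $\sigma$: it relies on the standard fact that names can be mixed along a maximal antichain, and one should observe that $\sigma$ need not itself be of minimal rank — we simply pass to its equivalence class, which contains a minimal-rank representative and lies in $T(\mathbb P,\dot{\mathbb Q})$ precisely because $1_{\mathbb P}$ forces $\sigma$ into $\dot{\mathbb Q}$. Beyond that, the argument is a routine unwinding of the two orderings involved, so I do not anticipate any genuine obstacle.
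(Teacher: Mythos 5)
Your proof is correct, and the mixing-along-a-maximal-antichain argument for the pullback property is exactly the technique the paper uses when it establishes the more general version of this statement in Lemma~\ref{easton projection} (where the name $\sigma_\alpha$ is defined to equal $\dot q_1(\check\alpha)$ below $p_1$ and $\check q_0(\check\alpha)$ on conditions incompatible with $p_1$). Nothing further is needed.
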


In this paper, we will use \Erdos-Rado theorem repeatedly. Since, at some points, we will need to refer to its proof we provide here a proof as well for the case that interests us.
\begin{theorem}[\Erdos-Rado]\label{thm: Erdos-Rado}\cite{ErdosRado1956}
Let $\kappa$ be a regular cardinal and let $\rho < \kappa$.
\[(2^{{<}\kappa})^+\to (\kappa + 1)^2_\rho\]
Namely, for every function $f$ from the unordered pairs of $(2^{{<}\kappa})^+$ to $\rho$ there is a set $H$ of order type $\kappa+1$ and an ordinal $\alpha$ such that for all $x, y\in H$, $f(x, y) = \alpha$.
\end{theorem}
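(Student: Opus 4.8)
The plan is to obtain first an \emph{end-homogeneous} set of order type $\kappa+1$ and then finish with a pigeonhole argument. Write $\mu = 2^{{<}\kappa}$ and $\lambda = \mu^+$; then $\lambda$ is regular, and since $\kappa$ is regular we have $\mu^{{<}\kappa} = \mu$ (a sequence of length ${<}\kappa$ into $2^{{<}\kappa}$ is bounded in ${}^{{<}\kappa}2$), and $\rho < \kappa \le \mu < \lambda$. Fix $f\colon [\lambda]^2 \to \rho$; it suffices to produce $H$ of order type $\kappa+1$ on which $f$ is constant.

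The heart of the argument is a tree construction. By recursion on $\len(s)$ I define, for $s \in \rho^{\le\kappa}$, a set $B_s \subseteq \lambda$: put $B_\emptyset = \lambda$; for $\len(s)$ limit put $B_s = \bigcap_{\alpha<\len(s)} B_{s\restriction\alpha}$; and if $\len(s) = \alpha+1$ and $B_{s\restriction\alpha} \ne \emptyset$, let $c_{s\restriction\alpha} = \min B_{s\restriction\alpha}$ and $B_s = \{\beta \in B_{s\restriction\alpha} : \beta > c_{s\restriction\alpha} \text{ and } f(\{c_{s\restriction\alpha},\beta\}) = s(\alpha)\}$ (with $B_s = \emptyset$ if $B_{s\restriction\alpha} = \emptyset$). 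Two observations drive everything. First, for each fixed $\alpha \le \kappa$ the sets $\{B_s : s \in \rho^\alpha\}$ are pairwise disjoint: if $s \ne s'$ first disagree at $\beta$, then $B_s$ and $B_{s'}$ are contained in the two disjoint $f(\{c_{s\restriction\beta},\cdot\})$-fibres indexed by $s(\beta)$ and $s'(\beta)$. Second, setting $L_\alpha = \bigcup\{B_s : s \in \rho^\alpha\}$, disjointness together with the coherence of the construction yields $L_{\alpha+1} = L_\alpha \setminus \{c_s : s \in \rho^\alpha,\ B_s \ne \emptyset\}$ and $L_\delta = \bigcap_{\alpha<\delta} L_\alpha$ at limits, whence $\lambda \setminus L_\alpha \subseteq \bigcup_{\beta<\alpha}\{c_s : s \in \rho^\beta\}$ has cardinality at most $|\rho^{{<}\kappa}| \le \mu < \lambda$ for every $\alpha \le \kappa$. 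In particular $L_\kappa \ne \emptyset$.

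So fix $s \in \rho^\kappa$ and $p \in B_s$. For every $\alpha < \kappa$ the set $B_{s\restriction\alpha} \supseteq B_s$ is nonempty, so $a_\alpha := c_{s\restriction\alpha}$ is defined; since $B_{s\restriction(\alpha+1)}$ consists of ordinals $> a_\alpha$ and contains both $p$ and all $a_{\alpha'}$ with $\alpha' > \alpha$, the sequence $\langle a_\alpha : \alpha < \kappa\rangle$ is strictly increasing, $p > a_\alpha$ for all $\alpha$, and $f(\{a_\alpha, x\}) = s(\alpha)$ whenever $x \in E := \{a_\alpha : \alpha < \kappa\} \cup \{p\}$ and $x > a_\alpha$; thus $E$ is end-homogeneous of order type $\kappa+1$. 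Finally, since $\rho < \kappa = \cf(\kappa)$, pick $i^* < \rho$ with $|\{\alpha < \kappa : s(\alpha) = i^*\}| = \kappa$ and set $H := \{a_\alpha : s(\alpha) = i^*\} \cup \{p\}$; then $\otp(H) = \kappa+1$ and $f$ is constantly $i^*$ on $[H]^2$, as desired.

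The only delicate point — and essentially the only place the hypotheses are used beyond the trivial final pigeonhole — is the limit-stage bookkeeping of $\lambda \setminus L_\alpha$: it goes through precisely because same-level nodes of the tree are disjoint (so no ordinal deleted at one level can reappear later) and because $\mu^{{<}\kappa} = \mu$ keeps $|\rho^{{<}\kappa}|$ strictly below $\lambda$. Everything else is routine verification of the stated recursion.
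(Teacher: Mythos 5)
Your proof is correct, but it takes a genuinely different route from the paper's. The paper argues via an elementary submodel $M \prec H(\chi)$ of size $2^{{<}\kappa}$ closed under ${<}\kappa$-sequences with $\delta = M \cap \lambda$ an ordinal of cofinality $\geq \kappa$, and builds the end-homogeneous sequence by repeatedly reflecting the statement ``there is $\zeta$ realizing the colour pattern $\langle f(\alpha_i,\delta)\rangle_{i<\eta}$'' (witnessed by $\zeta = \delta$) back into $M$; the top point of the end-homogeneous set is $\delta$ itself. You instead run the classical \Erdos--Rado ramification: the tree of sets $B_s$, $s \in \rho^{\leq\kappa}$, with same-level disjointness and the cardinality bound $|\rho^{{<}\kappa}| \leq 2^{{<}\kappa} < \lambda$ guaranteeing a surviving branch of length $\kappa$, whose node-minima $a_\alpha$ together with a point $p$ of $B_s$ form the end-homogeneous set. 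Both proofs produce an end-homogeneous set of order type $\kappa+1$ and finish by the same pigeonhole on $\rho < \cf(\kappa)$, and both establish the strengthening recorded in Remark~\ref{remark: erdos-rado for 2k colors} (end-homogeneity with $\lambda$ colours when $\lambda^{{<}\kappa}=\lambda$), which is what the paper actually reuses later; in your version the branch label $s$ plays the role of the colour pattern $\langle f(\alpha_i,\delta)\rangle$ and $p$ plays the role of $\delta$. The submodel proof is shorter once the machinery is assumed and generalizes smoothly to the forcing/reflection arguments in Sections~\ref{sec: local cc} and~\ref{sec: global cc}; your tree argument is more elementary (no $H(\chi)$, no elementarity) and makes the cardinal arithmetic hypothesis $\mu^{{<}\kappa}=\mu$ do visible work in the level-counting. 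The one point worth tightening is the throwaway justification of $(2^{{<}\kappa})^{{<}\kappa} = 2^{{<}\kappa}$: the clean computation is $(2^{{<}\kappa})^{|\alpha|} = \sup_{\beta<\kappa} 2^{|\beta|\cdot|\alpha|} = 2^{{<}\kappa}$ for $\alpha < \kappa$, using regularity of $\kappa$, rather than an appeal to boundedness of sequences.
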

\begin{proof}
Let $\lambda = (2^{{<}\kappa})^+$. Let $M$ be an elementary submodel of $H(\chi)$ (for large enough $\chi$), with $f\in M$, $^{{<}\kappa} M \subseteq M$, $M \cap \lambda$ is an ordinal and $|M| = 2^{{<}\kappa}$.

Let $\delta = M \cap \lambda$. By the closure assumptions on $M$, $\cf \delta \geq \kappa$.
Let us construct, by induction, an increasing sequence of ordinals $\alpha_i \in M$ such that:
\[\forall i < j < \kappa,\, f(\alpha_i, \alpha_j) = f(\alpha_i, \delta)\]

Assume $\eta < \kappa$ and we have constructed the first $\eta$ members of the sequence $\langle \alpha_i \colon i < \eta \rangle$. The element $r = \langle f(\alpha_i, \delta) \colon i < \eta\rangle\in \,^\eta \rho$ belongs to $M$, as $M$ is closed under ${<}\kappa$-sequences and thus contains $H(\kappa)$. Similarly, the function $g\colon \eta \to M \cap \lambda$, $g(i) = \alpha_i$, belongs to $M$.

\[H(\chi) \models \exists \zeta,\, \forall i < \eta,\, f(g(i), \zeta) = r(i),\, \zeta > \sup g\image \eta\]
as witnessed by $\zeta = \delta$. By elementarity, the same holds in $M$, so there is $\zeta\in M$ such that $f(\alpha_i, \zeta) = r(i) = f(\alpha_i, \delta)$ for all $i < \eta$. Take $\alpha_\eta = \zeta$.

Next, let us narrow down the sequence $\langle \alpha_i \colon i < \kappa\rangle$ to a homogeneous set. Let $\rho_i = f(\alpha_i, \delta) < \rho$. Since $\kappa$ is regular and $\rho < \kappa$, there is some $\rho_\star < \kappa$ and an unbounded set $I \subseteq \kappa$ such that for all $i\in I$, $\rho_i = \rho_\star$. Let $H = \{\alpha_i \colon i \in I\} \cup \{\delta\}$. For every $\alpha < \beta$ in $H$, $f(\alpha, \beta) = f(\alpha,\delta) = \rho_\star$, as wanted.
\end{proof}

\begin{remark}\label{remark: erdos-rado for 2k colors}
The same proof shows that whenever $\kappa$ is regular, $\lambda^{<\kappa} = \lambda$, and \[f\colon [\lambda^+]^2 \to \lambda,\] there is an increasing sequence of ordinals $\langle \alpha_i \colon i < \kappa + 1\rangle$, $\delta = \alpha_{\kappa}$ such that $f(\alpha_i, \alpha_j) = f(\alpha_i, \delta)$ for all $i < j < \kappa$. This observation will come in handy later.
\end{remark}

In this paper, we are interested in transfer properties between pairs of cardinals.  However, consideration of transfer between larger collections of cardinals will aid in the investigation of pairs.  Suppose $\langle \lambda_i \rangle_{i \in I}$, $\langle \kappa_i \rangle_{i \in I}$ are sequences of cardinals.  The notation
\[ \langle \lambda_i \rangle_{i \in I} \chang \langle \kappa_i \rangle_{i \in I} \]
signifies the assertion that for every $f : [\lambda]^{{<}\omega} \to \lambda$, where $\lambda = \sup \lambda_i$, there is $X \subseteq \lambda$ closed under $f$ such that $|X \cap \lambda_i| = \kappa_i$ for each $i \in I$.

Let us note a few easy facts about these principles:
\begin{itemize}
\item If $J \subseteq I$ and $\langle \lambda_i \rangle_{i \in I} \twoheadrightarrow \langle \kappa_i \rangle_{i \in I}$, then $\langle \lambda_i \rangle_{i \in J} \twoheadrightarrow \langle \kappa_i \rangle_{i \in J}$.
\item If $\langle \lambda_i \rangle_{i \in I} \twoheadrightarrow \langle \kappa_i \rangle_{i \in I}$ and $\langle \kappa_i \rangle_{i \in I} \twoheadrightarrow \langle \mu_i \rangle_{i \in I}$, then $\langle \lambda_i \rangle_{i \in I} \twoheadrightarrow \langle \mu_i \rangle_{i \in I}$.

\item If $\lambda_j,\kappa_j$ are the maximum elements of $\langle \lambda_i \rangle_{i \in I}$, $\langle \kappa_i \rangle_{i \in I}$ respectively, $\lambda' > \lambda_j$, and $\kappa_j > \kappa' \geq \sup_{i \not= j} \kappa_i$, then $\{ (j,\lambda') \} \cup \langle \lambda_i \rangle_{i \not= j} \twoheadrightarrow \{ (j,\kappa') \} \cup \langle \kappa_i \rangle_{i \not= j}$
\end{itemize}

During the construction of our models, we would like to use the fact that Chang's Conjecture is indestructible under a wide variety of forcing notions.

\begin{definition}
If $\lambda_1 \geq \lambda_0$ and $\kappa_1 \geq \kappa_0$ are cardinals and $\xi$ is an ordinal, let
\[ (\lambda_1,\lambda_0) \chang_\xi (\kappa_1,\kappa_0) \]
stand for the statement that for all $f : \lambda_1^{{<}\omega} \to \lambda_1$, there is $X \subseteq \lambda_1$ of size $\kappa_1$ such that $f\image (X^{{<}\omega}) \subseteq X$, $|X \cap \lambda_0| = \kappa_0$, and $\xi \subseteq X$.
\end{definition}

\begin{lemma}[Folklore]
\label{preserve small}
The statement $(\lambda_1,\lambda_0) \chang_{\kappa_0} (\kappa_1,\kappa_0)$ is preserved by $\kappa_0^+$-c.c.\ forcing.
\end{lemma}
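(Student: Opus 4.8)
**Proof plan for Lemma (preservation of $(\lambda_1,\lambda_0) \chang_{\kappa_0} (\kappa_1,\kappa_0)$ under $\kappa_0^+$-c.c. forcing).**

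The plan is to work in $V$, let $\mathbb{P}$ be $\kappa_0^+$-c.c., fix a $\mathbb{P}$-name $\dot f$ for a function from $\lambda_1^{<\omega}$ to $\lambda_1$, and a condition $p$, and produce in $V$ a single function $g \colon \lambda_1^{<\omega} \to [\lambda_1]^{\leq \kappa_0}$ whose values majorize all possible values of $\dot f$. Concretely, for each $s \in \lambda_1^{<\omega}$, by the $\kappa_0^+$-c.c. there is a maximal antichain below $p$ deciding $\dot f(s)$, and it has size $\leq \kappa_0$; let $g(s)$ be the set of values it forces, so $|g(s)| \leq \kappa_0$ and $p \Vdash \dot f(s) \in g(s)$. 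Now code $g$ as an ordinary function $F \colon \lambda_1^{<\omega} \to \lambda_1$ on a slightly padded index set: enumerate each $g(s)$ in order type $\leq \kappa_0$ and set $F(s^\frown \langle i\rangle)$ to be the $i$-th element of $g(s)$ when $i < \otp(g(s))$ (and something harmless otherwise). Since $\kappa_0 < \kappa_1$, a set closed under $F$ absorbs $g$ in the sense that if $X$ is closed under $F$ then $g(s) \subseteq X$ for every $s \in X^{<\omega}$.

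Next I would apply the hypothesis $(\lambda_1,\lambda_0) \chang_{\kappa_0} (\kappa_1,\kappa_0)$ in $V$ to the function $F$: there is $X \subseteq \lambda_1$ with $|X| = \kappa_1$, $F\image(X^{<\omega}) \subseteq X$, $|X \cap \lambda_0| = \kappa_0$, and $\kappa_0 \subseteq X$. By the previous paragraph, $X$ is also closed under $g$, hence $p \Vdash \dot f\image(X^{<\omega}) \subseteq X$. The cardinalities $|X| = \kappa_1$ and $|X \cap \lambda_0| = \kappa_0$ are computed in $V$, and since $\mathbb{P}$ is $\kappa_0^+$-c.c. it is in particular $\kappa_1^+$-c.c., so it preserves the cardinals $\kappa_0$ and $\kappa_1$; thus $|X| = \kappa_1$ and $|X \cap \lambda_0| = \kappa_0$ still hold in the extension. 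Finally $\kappa_0 \subseteq X$ is absolute. This exhibits, below $p$, a witness for $(\lambda_1,\lambda_0) \chang_{\kappa_0} (\kappa_1,\kappa_0)$ in $V[G]$; since $p$ and $\dot f$ were arbitrary, the statement holds in $V[G]$.

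The one point requiring care — the main (mild) obstacle — is the bookkeeping that turns the set-valued $g$ into the ordinary function $F$ while making sure that closure under $F$ really does give closure under $g$; this uses $|g(s)| \leq \kappa_0 < \kappa_1 = |X|$ together with $\kappa_0 \subseteq X$, which is exactly why the subscript-$\kappa_0$ version of Chang's Conjecture (rather than the plain version) is the right statement to propagate. Everything else is routine: the $\kappa_0^+$-c.c. gives both the small antichains used to define $g$ and the preservation of the relevant cardinals.
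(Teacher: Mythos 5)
Your proof is correct and is essentially the paper's argument: the paper likewise uses the $\kappa_0^+$-c.c.\ to bound the set of possible values of $\dot f(x)$ by $\kappa_0$ and then closes under a ground-model function $g(\alpha,x)$ returning the $\alpha$-th possible value, which is exactly your $F$ up to how the index $\alpha<\kappa_0$ is coded into the argument; in both versions the hypothesis $\kappa_0\subseteq X$ is what makes the enumeration indices available. One small inaccuracy in a side remark: $\kappa_0^+$-c.c.\ forcing preserves cardinals $\geq\kappa_0^+$ but need not preserve $\kappa_0$ itself (e.g.\ $\Col(\omega,\omega_1)$ is $\omega_2$-c.c.), though the paper's own proof does not address this point either and it is harmless in all the applications.
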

\begin{proof}
Suppose $\mathbb P$ is a $\kappa_0^+$-c.c.\ forcing and $\dot f$ is a $\mathbb P$-name for a function from $\lambda_1^{{<}\omega}$ to $\lambda_1$.  For every $x \in \lambda_1^{{<}\omega}$, let us look at the $\mathbb{P}$-name $\dot{f}(x)$. By the chain condition of $\mathbb{P}$, the set of possible values for $\dot f(x)$ has size $\leq \kappa_0$.  If $g(\alpha,x)$ returns the $\alpha^{th}$ possible value for $\dot f(x)$, then a set closed under $g$ of the appropriate type will be closed under $\dot f$ in the extension by $\mathbb P$.
\end{proof}

\begin{lemma}
Suppose either $\lambda_0 = \kappa_0^{+\xi}$, or there is $\lambda \leq \lambda_0$ such that $\lambda_0 = \lambda^{+\xi}$ and $\lambda^{\kappa_0} \leq \lambda_0$.  If $(\lambda_1,\lambda_0) \chang_\xi (\kappa_1,\kappa_0)$, then $(\lambda_1,\lambda_0) \chang_{\kappa_0} (\kappa_1,\kappa_0)$.
\end{lemma}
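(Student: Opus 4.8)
The plan is to pass to the equivalent formulation in terms of elementary submodels and then apply the hypothesis to a suitably enriched structure. Recall that $(\lambda_1,\lambda_0)\chang_\xi(\kappa_1,\kappa_0)$ is equivalent to the assertion that every structure $\mathcal A$ on $\lambda_1$ in a countable language has an elementary submodel $X$ with $|X|=\kappa_1$, $|X\cap\lambda_0|=\kappa_0$ and $\xi\subseteq X$ (Skolemise $\mathcal A$ and code its Skolem functions into a single $f\colon\lambda_1^{<\omega}\to\lambda_1$). Given such an $\mathcal A$, I would expand it to $\mathcal B$ by adjoining: constants naming $\kappa_0,\lambda,\lambda_0$ and the cardinals $\lambda^{+\eta}$ for $\eta\le\xi$ (taking $\lambda=\kappa_0$ in the first case); the constant $0$, ordinal successor and ordinal addition; the cardinality function $\alpha\mapsto|\alpha|$; a single function coding, for each $\alpha<\lambda_0$, a bijection $e_\alpha\colon|\alpha|\to\alpha$; and, in the second case, using $\lambda^{\kappa_0}\le\lambda_0$, an injection $\iota\colon{}^{\kappa_0}\lambda\to\lambda_0$ together with its inverse on its range and the evaluation map $(\iota(s),i)\mapsto s(i)$. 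Applying $(\lambda_1,\lambda_0)\chang_\xi(\kappa_1,\kappa_0)$ to a Skolemisation of $\mathcal B$ produces $X\prec\mathcal B$ with $|X|=\kappa_1$, $|X\cap\lambda_0|=\kappa_0$, $\xi\subseteq X$, and closed under all of $\mathcal A$; it then remains only to show $\kappa_0\subseteq X$.

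Since $\xi$ is named and $\xi\subseteq X$, every ordinal $\eta\le\xi$ lies in $X$, hence $\lambda^{+\eta}\in X$ for all such $\eta$ (in particular $\lambda,\lambda_0\in X$); also $\omega\subseteq X$ (from $0$ and successor), and $\delta+\eta\in X$ whenever $\delta\in X$ and $\eta<\xi$. The bijections $e_\alpha$ propagate transitivity up the cardinals: if $\theta\in X$ is a cardinal and every cardinal below $\theta$ is contained in $X$, then for $\alpha\in X\cap\theta$ we have $|\alpha|\in X$ and $|\alpha|\subseteq X$, so $X\cap\alpha=e_\alpha\image(X\cap|\alpha|)=e_\alpha\image|\alpha|=\alpha$; thus $X\cap\theta$ is transitive, and moreover $\theta\subseteq X$ when $\theta$ is a limit cardinal. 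Consequently, if there is a least cardinal $\theta^*$ with $\theta^*\not\subseteq X$, then $\theta^*$ is a successor cardinal $\rho^+$ with $\rho\subseteq X$ and $X\cap\theta^*$ is an ordinal $<\theta^*$, whence $|X\cap\theta^*|\le\rho<\theta^*$.

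The heart of the argument is to show $\theta^*>\kappa_0$ (or that $\theta^*$ does not exist), which is exactly $\kappa_0\subseteq X$. Suppose toward a contradiction that $\theta^*\le\kappa_0$. The key estimate encodes every ordinal below $\lambda_0$ by a finite string of ``collapse instructions'' acting on a base ordinal below $\theta^*$: iterating the maps $e_\alpha$ — and, in the second case, using $\iota$ to skip over the cardinals strictly between $\lambda$ and $\kappa_0$ — one obtains an injection of $X\cap\lambda_0$ into the set of pairs $(s,\beta)$ with $\beta\in X\cap\theta^*$ and $s$ a finite sequence of cardinals drawn from a set of size at most $|\xi|+\aleph_0$ (this is where the form $\lambda_0=\lambda^{+\xi}$ and, in the second case, $\lambda^{\kappa_0}\le\lambda_0$, are used). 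Therefore $\kappa_0=|X\cap\lambda_0|\le(|\xi|+\aleph_0)\cdot|X\cap\theta^*|\le(|\xi|+\aleph_0)\cdot\rho$. As $\rho<\kappa_0$, and $\kappa_0\ne\aleph_0$ here (since $\omega\subseteq X$ forces $\theta^*\ge\aleph_1$), this forces $\kappa_0\le|\xi|$; but $\kappa_0$ is a cardinal, so $\kappa_0\le\xi$ and hence $\kappa_0\subseteq\xi\subseteq X$, contradicting $|X\cap\kappa_0|\le\rho<\kappa_0$. So $\kappa_0\subseteq X$, and since the remaining requirements ($|X|=\kappa_1$, $|X\cap\lambda_0|=\kappa_0$, closure under $\mathcal A$) are inherited by $X$, we obtain $(\lambda_1,\lambda_0)\chang_{\kappa_0}(\kappa_1,\kappa_0)$.

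I expect the main obstacle to be the coding step in the previous paragraph, specifically the bookkeeping that keeps the number of distinct instructions down to $|\xi|+\aleph_0$. The natural reduction $\gamma\mapsto e_\alpha^{-1}(\gamma)$ has to be arranged so that the instruction used at each stage is recoverable from cardinal data alone rather than an arbitrary ordinal — this is why the precise domain $|\alpha|$ of $e_\alpha$ matters — and it is exactly here that the two permitted shapes of $\lambda_0$ enter: the $(\cdot)^{+\xi}$ shape to count the cardinals between the base and $\lambda_0$, and $\lambda^{\kappa_0}\le\lambda_0$ to absorb the cardinals below $\kappa_0$ through $\iota$. The interaction of this counting with limit cardinals, in particular when $\kappa_0$ has many cardinals below it, is the most delicate point and may require decomposing limit cardinals along their cofinalities.
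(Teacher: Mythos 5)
Your reduction to elementary submodels, the transitivity analysis of $X \cap \theta$ via the collapses $e_\alpha$, and the identification of the least bad cardinal $\theta^*$ as a successor $\rho^+$ are all fine. The fatal problem is the step you yourself flag as the main obstacle: the injection of $X \cap \lambda_0$ into $(\text{finite strings from a set of size } |\xi|+\aleph_0) \times (X \cap \theta^*)$ does not exist, and no bookkeeping will produce it, because your overall strategy --- take an \emph{arbitrary} witness for a suitably enriched structure and prove that it already contains $\kappa_0$ --- is consistently false. What the bijections $e_\alpha$ actually give is $X \cap \alpha = e_\alpha \image (X \cap |\alpha|)$, hence $\otp(X \cap \alpha) < |X \cap \mu|^+$ for $\alpha \in X \cap [\mu,\mu^+)$, and therefore only $|X \cap \mu^+| \leq |X \cap \mu|^+$; iterating gives $|X \cap \lambda^{+\eta}| \leq |X \cap \lambda|^{+\eta}$, and the extra ``$+$'' at each of the $\xi$ stages is exactly the slack your coding would have to eliminate. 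It cannot: $e_\alpha$ collapses the ordinals \emph{below} $\alpha$, not $\alpha$ itself, so recording which element of $X \cap [\mu,\mu^+)$ one is looking at genuinely costs up to $|X\cap\mu^+|$ many distinct instructions, not a single cardinal label. Concretely, take $\xi = 1$ and $\lambda_0 = \kappa_0^+$. It is consistent --- indeed by constructions of exactly the kind carried out in Section 3 of this paper, which produce $\langle \lambda^{+i}\rangle_i \chang \langle \rho^{+i}\rangle_i$ realized by a single submodel --- that some $N \prec \langle H(\chi),\in,<^*\rangle$ with $\kappa_0,\lambda_0 \in N$ has $|N \cap \lambda_0| = \kappa_0$ while $|N \cap \kappa_0| = \mu < \kappa_0$. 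Such an $N$ is closed under every function you placed in $\mathcal B$ and satisfies $\xi \subseteq N$ trivially, yet your estimate applied to it would yield $\kappa_0 = |N \cap \lambda_0| \leq \aleph_0 \cdot |N \cap \kappa_0| < \kappa_0$. So the key estimate is not merely delicate; it is false.

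The lemma is nevertheless true, but the proof must manufacture a \emph{different} witness rather than show that the given one works; the paper does not prove it, deferring to Foreman's \emph{Smoke and mirrors}, Section 2.2.1. The argument there passes from a witness $X$ to a set such as $Y = \sk(X \cup \kappa_0)$ and controls $|Y \cap \lambda_0|$: every element of $Y \cap \lambda_0$ lies in $\range(\tau(\bar x,\cdot)) \cap \lambda_0$ for a Skolem term $\tau$ and $\bar x \in X^{<\omega}$, each such trace is an element of $[\lambda_0]^{\leq \kappa_0}$ lying in $X$ and of size at most $\kappa_0$, and the arithmetic hypotheses ($\lambda_0 = \kappa_0^{+\xi}$, respectively $\lambda^{\kappa_0} \leq \lambda_0$) together with $\xi \subseteq X$ are what let one code these traces by members of $X \cap \lambda_0$ and so bound their number by $\kappa_0$. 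That is where the two permitted shapes of $\lambda_0$ enter --- not in a pointwise collapse of the ordinals below $\lambda_0$ onto ordinals below $\kappa_0$.
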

\begin{proof}
See \cite[Section 2.2.1]{Foreman2009}.
\end{proof}
Under \GCH, in many cases $(\lambda_1,\lambda_0) \chang (\kappa_1,\kappa_0)$ implies $(\lambda_1,\lambda_0) \chang_{\kappa_0} (\kappa_1,\kappa_0)$ (see \cite[Proposition 19]{Foreman2009}). For example, this is true for regular cardinals $\kappa_0, \kappa_1, \lambda_0, \lambda_1$.
Foreman \cite{Foreman1983} proved the next result for the case where $\mathbb{P}$ is trivial.

\begin{lemma}\label{preserve closed}
Let $\kappa_1$ be regular. Suppose $\mathbb P$ is $\kappa_1$-c.c., $\mathbb Q$ is $\kappa_1$-closed, and $\Vdash_{\mathbb P} \dot{\mathbb R} \lhd \check{\mathbb Q}$.  If $\Vdash_{\mathbb P} (\kappa_1,\kappa_0) \chang (\mu_1,\mu_0)$, then $\Vdash_{\mathbb P * \dot{\mathbb R}} (\kappa_1,\kappa_0) \chang (\mu_1,\mu_0)$.
\end{lemma}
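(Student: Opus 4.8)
The strategy is to absorb $\mathbb{P}*\dot{\mathbb{R}}$ into $\mathbb{P}\times\mathbb{Q}$ and run Foreman's trivial-$\mathbb{P}$ argument on the $\mathbb{Q}$-side, where $\mathbb{Q}$ is genuinely $\kappa_1$-closed. Since $1\Vdash_{\mathbb{P}}\dot{\mathbb{R}}\lhd\check{\mathbb{Q}}$, iterating complete suborders gives $\mathbb{P}*\dot{\mathbb{R}}\lhd\mathbb{P}*\check{\mathbb{Q}}=\mathbb{P}\times\mathbb{Q}$. Fix a $\mathbb{P}$-generic $G$, write $\mathbb{R}=\dot{\mathbb{R}}^G$, and let $e\colon\mathbb{R}\to\mathbb{Q}$ be the induced complete embedding. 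By Easton's Lemma ($\mathbb{P}$ is $\kappa_1$-c.c., $\mathbb{Q}$ is $\kappa_1$-closed, $\kappa_1$ regular) the forcing $\mathbb{Q}$, hence its complete suborder $\mathbb{R}$, is $\kappa_1$-distributive over $V[G]$; in particular $\mathbb{R}$ preserves cardinals $\le\kappa_1$, so the clauses ``$|X|=\mu_1$'' and ``$|X\cap\kappa_0|=\mu_0$'' are absolute between $V[G]$ and its $\mathbb{R}$-extension. Hence it suffices to show: for every $\mathbb{R}$-name $\dot f$ (over $V[G]$) for a function $\kappa_1^{{<}\omega}\to\kappa_1$ and every $r_0\in\mathbb{R}$ there are $r^*\le r_0$ in $\mathbb{R}$ and $X\in V[G]$ with $|X|=\mu_1$, $|X\cap\kappa_0|=\mu_0$, such that $r^*\Vdash_{\mathbb{R}}$ ``$X$ is closed under $\dot f$''. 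Via $e$ we regard $\dot f$ as a $\mathbb{Q}$-name over $V[G]$ whose interpretation depends only on the $\mathbb{R}$-part of a $\mathbb{Q}$-generic.

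Now work in $V[G]$. Using $(\kappa_1,\kappa_0)\chang(\mu_1,\mu_0)$ there, and the fact that Chang's Conjecture yields internally approachable chains of elementary submodels, fix $\mathfrak{B}=\bigcup_{i<\cf\mu_1}\mathfrak{B}_i\prec H(\theta)^{V[G]}$ (large $\theta$), increasing and continuous with $\langle\mathfrak{B}_j:j\le i\rangle\in\mathfrak{B}_{i+1}$, each $|\mathfrak{B}_i|<\mu_1$, with $\mathbb{P},G,\mathbb{Q},e,\dot f,r_0\in\mathfrak{B}_0$, $|\mathfrak{B}|=\mu_1$ and $|\mathfrak{B}\cap\kappa_0|=\mu_0$. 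The heart of the proof is to construct a condition $q^*\le e(r_0)$ in $\mathbb{Q}$ below which $\dot f$ maps $(\mathfrak{B}\cap\kappa_1)^{{<}\omega}$ into $\mathfrak{B}\cap\kappa_1$; one then sets $X:=\mathfrak{B}\cap\kappa_1\in V[G]$. To build $q^*$ one recurses along the chain, producing a descending sequence $\langle q_i:i<\cf\mu_1\rangle$ \emph{in $V$}: at stage $i$ one strengthens $q_i$ so as to $\mathbb{P}$-decide $\dot f$ on the (boundedly many) arguments in $\mathfrak{B}_i\cap\kappa_1$ into values lying in $\mathfrak{B}_{i+1}$ -- here the $\kappa_1$-chain condition of $\mathbb{P}$ bounds the number of $\mathbb{P}$-possibilities and internal approachability lets $\mathfrak{B}_{i+1}$ absorb the previous segment -- and at limits one takes a lower bound, which exists because $\mathbb{Q}$ is $\kappa_1$-closed \emph{in $V$} and $\cf\mu_1<\kappa_1$; finally $q^*$ is a lower bound of the whole sequence. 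It is precisely this need to run the closure construction in $V$, while Chang's Conjecture is only available over $V[G]$, that is the main point, and is what goes beyond Foreman's $\mathbb{P}=1$ case, where $V=V[G]$.

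With $q^*$ in hand we have $q^*\Vdash_{\mathbb{Q}}$ ``$\check X$ is closed under $\dot f$'', while $|X|=\mu_1$ and $|X\cap\kappa_0|=\mu_0$ hold absolutely. Since $\dot f$ depends only on the $\mathbb{R}$-part, the Boolean value $b$ of ``$\check X$ is closed under $\dot f$'' lies in the complete subalgebra $\mathbb{RO}(\mathbb{R})\subseteq\mathbb{RO}(\mathbb{Q})$; as $q^*\le b$ and $q^*\le e(r_0)$, the corresponding element of $\mathbb{RO}(\mathbb{R})$ is compatible with $r_0$, so there is $r^*\le r_0$ in $\mathbb{R}$ with $r^*\Vdash_{\mathbb{R}}$ ``$\check X$ is closed under $\dot f$'', as required. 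Hence $\Vdash_{\mathbb{R}}(\kappa_1,\kappa_0)\chang(\mu_1,\mu_0)$ holds in $V[G]$, i.e.\ $\Vdash_{\mathbb{P}*\dot{\mathbb{R}}}(\kappa_1,\kappa_0)\chang(\mu_1,\mu_0)$. Everything apart from the construction of $q^*$ -- the absorption lemma, the cardinal-preservation bookkeeping, and the Boolean-value descent along $e$ -- is routine; the delicate fusion-type recursion producing a single master condition $q^*$ that decides $\dot f$ on all of $\mathfrak{B}\cap\kappa_1$ into $\mathfrak{B}$ is where the real work lies.
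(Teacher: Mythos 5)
Your opening reduction to the product $\mathbb P \times \mathbb Q$ via $\mathbb P * \dot{\mathbb R} \lhd \mathbb P \times \mathbb Q$ and Easton's lemma is exactly the paper's first step, and the concluding Boolean-value descent along $e$ is fine. The gap is in the middle, precisely at the point you flag as ``where the real work lies'': the construction of $q^*$ as described cannot be carried out. The chain $\langle \mathfrak B_i \rangle$ is chosen in $V[G]$ (it contains $G$ as an element, and it is produced by an application of Chang's Conjecture, which is only assumed to hold in $V[G]$), and $\dot f$ is an $\mathbb R$-name over $V[G]$; hence the recursion producing $\langle q_i : i < \cf\mu_1\rangle$, whose stages must consult $\mathfrak B_i \cap \kappa_1$ and $\mathfrak B_{i+1}$, necessarily takes place in $V[G]$, not in $V$. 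But in $V[G]$ the poset $\mathbb Q$ is only $\kappa_1$-distributive (that is all Easton's lemma gives), not $\kappa_1$-closed: a descending sequence of length ${<}\kappa_1$ lying in $V[G]\setminus V$ need not have a lower bound, so the limit stages of your recursion can fail. Declaring the sequence to be ``in $V$'' does not resolve this, since a $V$-recursion cannot refer to the $V[G]$-objects $\mathfrak B_i$. A secondary unsupported step is the assertion that Chang's Conjecture ``yields internally approachable chains'' with the prescribed trace: plain $(\kappa_1,\kappa_0)\chang(\mu_1,\mu_0)$ produces a single structure, and arranging $\langle \mathfrak B_j : j \le i\rangle \in \mathfrak B_{i+1}$ is not an automatic consequence.

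The paper resolves the tension by reversing the order of quantifiers. Working entirely in $V$, below an arbitrary $(p,q)$ it decides $\dot f(s_\alpha)$ for \emph{every} $\alpha < \kappa_1$ at once: for each $\alpha$ it builds a maximal antichain $\langle p^\beta_\alpha : \beta < \eta_\alpha\rangle$ below $p$ in $\mathbb P$ (of size ${<}\kappa_1$ by the chain condition), interleaved with a single descending sequence $\langle q^*_\alpha : \alpha < \kappa_1\rangle$ in $\mathbb Q$, using the closure of $\mathbb Q$ in $V$, where it is genuinely available because no generic data enters the construction. Only afterwards does it pass to $V[G]$, read off the derived function $f'(s_\alpha)$ as the value decided by the unique member of the $\alpha$-th antichain lying in $G$, apply Chang's Conjecture in $V[G]$ to $f'$ to obtain $X$, observe that the indices of $X^{<\omega}$ are bounded by some $\gamma < \kappa_1$ (since $\mu_1 < \kappa_1$ and $\kappa_1$ is regular), and take the already-constructed $q^*_\gamma$ as the master condition. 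If you wish to keep your elementary-submodel formulation, you must likewise make the $\mathbb Q$-side fusion uniform over all of $\kappa_1^{<\omega}$ and independent of the choice of submodel, selecting the submodel only at the end.
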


\begin{proof}
First, let us claim that if $\mathbb P \times \mathbb Q$ forces that  $(\kappa_1,\kappa_0) \chang (\mu_1,\mu_0)$ holds then also $\Vdash_{\mathbb P \ast \dot{\mathbb R}} (\kappa_1,\kappa_0) \chang (\mu_1,\mu_0)$.  

By Easton's lemma, $\Vdash_{\mathbb P}$ ``$\mathbb Q$ is $\kappa_1$-distributive,'' thus $\Vdash_{\mathbb P * \dot{\mathbb R}}$ ``$\mathbb Q / \mathbb R$ is $\kappa_1$-distributive.''  Let $G * \dot H$ be $\mathbb P * \dot{\mathbb R}$-generic, and let $f : \kappa_1^{<\omega} \to \kappa_1$ be in $V[G][H]$.  If $H^\prime$ is $\mathbb Q / H$-generic, then in $V[G][H][H^\prime]$, there is an $X \subseteq \kappa_1$ closed under $f$ such that $|X| = \mu_1$ and $|X \cap \kappa_0| = \mu_0$.  By the distributivity of $\mathbb Q / \mathbb R$, $X \in V[G][H]$.

Now suppose $\Vdash_{\mathbb P \times \mathbb Q}$ ``$\dot f : \kappa_1^{<\omega} \to \kappa_1$.''  Let $\langle s_\alpha : \alpha < \kappa_1 \rangle$ enumerate $\kappa_1^{<\omega}$, and let $(p,q) \in \mathbb P \times \mathbb Q$ be arbitrary.  Let $(p^0_0,q^0_0)$ decide $\dot f(s_0)$, with $(p^0_0,q^0_0) \leq (p,q)$.  If possible, choose $p^1_0 \perp p^0_0$ also below $p$, and let $q^1_0 \leq q^0_0$ be such that $(p^1_0,q^1_0)$ decides $\dot f(s_0)$.  For as long as possible, keep choosing a sequence of pairs $(p_0^\alpha,q_0^\alpha)$, such that the $p_0^\alpha$'s form an antichain below $p$ and the $q_0^\alpha$'s form a descending sequence.  If we have chosen less than $\kappa_1$ $q^0_\alpha$'s, we can choose a condition below all of them by $\kappa_1$-closure.  By the $\kappa_1$-c.c., this must terminate at some $\eta_0 < \kappa_1$, at which point $\langle p_0^\alpha : \alpha < \eta_0 \rangle$ is a maximal antichain below $p$.  Let $q_0^* \leq q_0^\alpha$ for all $\alpha < \eta_0$.  Next, do the same with respect to $\dot f(s_1)$, but starting with $q^0_1 \leq q^*_0$.  Continuing in this way, we get maximal antichains $\langle p^\beta_\alpha : \beta < \eta_\beta \rangle$ in $\mathbb P \restriction p$ for each $\alpha < \kappa_1$, and a descending sequence $\langle q^*_\alpha : \alpha < \kappa_1 \rangle$, with the property that whenever $\alpha < \kappa_1$, $\beta < \eta_\alpha$ and $\alpha \leq \gamma < \kappa_1$, $(p^\beta_\alpha,q^*_\gamma)$ decides $\dot f(s_\alpha)$.

Let $G \subseteq \mathbb P$ be generic over $V$ with $p \in G$.  For each $\alpha < \kappa_1$, there is a unique $\beta < \eta_\alpha$ such that $p^\beta_\alpha =_{\mathrm{def}} p^*_\alpha \in G$.  In $V[G]$, define a function $f^\prime : \kappa_1^{<\omega} \to \kappa_1$ by $f^\prime(s_\alpha) = \beta$ iff $(p^*_\alpha,q^*_\alpha) \Vdash^V_{\mathbb P \times \mathbb Q} \dot f(s_\alpha) = \beta$.  By the hypothesis about $\mathbb P$, let $X \subseteq \kappa_1$ be such that $X$ is closed under $f^\prime$, $|X| = \mu_1$ and $|X \cap \kappa_0| = \mu_0$.  Let $\gamma < \kappa_1$ be such that $X^{<\omega} \subseteq \{ s_\alpha : \alpha < \gamma \}$.  Next, take $H \subseteq \mathbb Q$ generic over $V[G]$ with $q^*_\gamma \in H$.  Then for all $\alpha < \gamma$, $\dot f^{G \times H}(s_\alpha) = f^\prime(s_\alpha)$, since $\{ (p^*_\alpha,q^*_\alpha) : \alpha < \gamma \} \subseteq G \times H$.  As $(p,q)$ was arbitrary, $\mathbb P \times \mathbb Q$ forces that there is $X \subseteq \kappa_1$ closed under $\dot f$ such that $|X \cap \kappa_0 | = \mu_0$.
\end{proof}

\section{Local Chang's Conjecture from subcompact cardinals}\label{sec: local cc}
In this section we will prove the consistency of certain instances of Chang's Conjecture relative to the existence of large cardinals at the level of supercompact cardinals.

We start with the concept of \emph{subcompactness}. This large cardinal notion was isolated by Jensen. We will use a generalization due to Brooke-Taylor and Friedman.
\begin{definition}\cite{BrookeTaylorFriedman2013}
Let $\kappa \leq \lambda$ be cardinals. $\kappa$ is $\lambda$-subcompact if for every $A\subseteq H(\lambda)$ there are $\bar\kappa, \bar\lambda < \kappa$, $\bar{A}\subseteq H(\bar\lambda)$ and an elementary embedding \[j\colon \langle H(\bar\lambda), \bar{A}, \in\rangle \to \langle H(\lambda), A, \in \rangle\]
with critical point $\bar\kappa$ and $j(\bar\kappa) = \kappa$.
\end{definition}
Following Neeman and Steel, we say that $\kappa$ is $(+\alpha)$-subcompact if it is $\kappa^{+\alpha}$-subcompact.

It follows immediately from a theorem of Magidor \cite[Lemma 1]{Magidor1971}, that $\kappa$ is $\lambda$-subcompact for all $\lambda$ iff $\kappa$ is supercompact. Nevertheless, subcompactness is level-by-level weaker that supercompactness.

Assuming \GCH, if $\kappa$ is $\kappa^{+\alpha + 1}$-supercompact cardinal and $\alpha < \kappa$, then $\kappa$ is $(+\alpha + 1)$-subcompact and the normal measure derived from the supercompact embedding concentrates on $(+\alpha + 1)$-subcompact cardinals.

On the other hand, if $\kappa$ is $(+\alpha + 2)$-subcompact then there are unboundedly many cardinals $\rho < \kappa$ such that $\rho$ is $\rho^{+\alpha + 1}$-supercompact.

\begin{theorem}\label{thm: cc at single point}
Assume \GCH. Let $\kappa$ be $(+2)$-subcompact. Then for every regular $\mu < \kappa$ there is $\rho < \kappa$ such that forcing with $\Col(\mu, \rho^{+})\times\Col(\rho^{++}, \kappa)$ forces $(\mu^{+3},\mu^{++})\chang(\mu^{+},\mu)$.
\end{theorem}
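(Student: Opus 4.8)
The plan is to use the $(+2)$-subcompactness of $\kappa$ to obtain, for a fixed regular $\mu<\kappa$, a reflection point $\rho$ together with an elementary embedding that will witness Chang's Conjecture in the extension. Concretely, apply $(+2)$-subcompactness to a set $A\subseteq H(\kappa^{++})$ coding the relevant structure (in particular coding $\mu$, a bookkeeping well-order, and names for the collapses): we get $\bar\kappa,\bar\lambda<\kappa$, $\bar A\subseteq H(\bar\lambda)$, and an elementary embedding $j\colon\langle H(\bar\lambda),\bar A,\in\rangle\to\langle H(\kappa^{++}),A,\in\rangle$ with $\crit j=\bar\kappa$ and $j(\bar\kappa)=\kappa$. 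Since $A$ codes $\mu$ and $\mu<\bar\kappa$ (we may arrange $\mu<\bar\kappa$ because $\mu$ is in the range and below $\kappa$; $\bar\kappa$ is the critical point so $j(\mu)=\mu$), the model $H(\bar\lambda)$ sees a cardinal, call it $\rho:=\bar\kappa$, and by \GCH\ in $V$ and elementarity $\bar\lambda=\rho^{++}$ while $j(\rho^{++})=\kappa^{++}$, hence $j$ restricts to an elementary embedding $H(\rho^{++})\to H(\kappa^{++})$ sending $\rho\mapsto\kappa$, $\rho^+\mapsto\kappa^+$, $\rho^{++}\mapsto\kappa^{++}$, and fixing $\mu,\mu^+$.

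Next I would transfer this embedding through the forcing. Let $\mathbb P=\Col(\mu,\rho^+)\times\Col(\rho^{++},\kappa)$ and note that in the source model the analogous poset is $\bar{\mathbb P}=\Col(\mu,\rho^+)$ essentially (since below $\bar\kappa=\rho$ the second factor becomes trivial / absorbed). The key is to build, in a generic extension, an elementary embedding $\hat\jmath$ from (a structure on) $\mu^{+3}$-many ordinals into the structure $\langle H(\kappa^{++})[G],\dots\rangle$ with $|\hat\jmath\image(\mu^{+3})\cap\kappa^{++}|$ having the right sizes: after collapsing, $\rho$ becomes $\mu^+$, $\rho^+$ becomes $\mu^{++}$ (being collapsed to $\mu^+$? — one must check the arithmetic: $\Col(\mu,\rho^+)$ makes $\rho^+$ have size $\mu$, so $\rho^{++}$ becomes $\mu^+$), and $\Col(\rho^{++},\kappa)$ makes $\kappa$ have size $\rho^{++}=\mu^+$ so $\kappa^+$ becomes $\mu^{++}$ and $\kappa^{++}$ becomes $\mu^{+3}$. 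Thus in $V[G]$ the ordinal $\kappa^{++}$ has cardinality $\mu^{+3}$ and $\kappa$ has cardinality $\mu^+$, so an elementary embedding $j^+\colon M\to \langle H(\kappa^{++})[G\cap H(\kappa^{++})],\in,\dots\rangle$ with $M$ of size $\mu^{++}$ and $\crit$ related to $\mu^+$, lifted from $j$, gives exactly the Chang structure: the image is a set of size $\mu^{++}$ (as $|\rho^{++}|=\mu^+$ in $V[G]$, wait — $\mu^{+3}$?) whose intersection with the collapsed $\kappa$ has size $\mu^+$ and with the collapsed $\kappa^+$-analogue has the right size. I would set it up via the equivalent formulation in clause (3) of the Lemma: given $f\colon(\mu^{+3})^{<\omega}\to\mu^{+3}$ in $V[G]$, pull back through the collapse to get (a name for) a structure on $\kappa^{++}$, lift $j$ to act on it, and let $X=\hat\jmath\image(\text{domain})$; closure under $f$ follows from elementarity and the sizes from the collapse arithmetic above.

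To lift $j$ I would use the standard master-condition / absorption technique together with the folklore and Laver termspace lemmas recalled in the preliminaries: the source forcing $\bar{\mathbb P}$ is $\mu$-closed and small (size $\rho^+$, hence by Lemma~\ref{folk} and GCH equivalent to a Levy collapse), and $j(\bar{\mathbb P})$ projects onto $\mathbb P$ after absorbing the tail $\Col(\rho^{++},\kappa)$; the termspace/projection lemmas let us factor $j(\bar{\mathbb P})$ through $\mathbb P$ times something sufficiently closed so that a generic for the latter exists over $V[G]$ by a counting argument (there are only $\mu^{+3}$-many dense sets to meet, and the quotient is $\mu^{+3}$-closed-ish in the relevant sense). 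A master condition below the pointwise image of the generic for $\bar{\mathbb P}$ exists because $\bar{\mathbb P}$ has size $\rho^+<\kappa$ so its image is a set of size $\rho^+<j(\rho)$ which $j(\bar{\mathbb P})$ being $\rho^{++}$-closed in the relevant range can bound; one must verify the closure of $j(\bar{\mathbb P})$ below the critical point's image is enough. The main obstacle, and where I would spend the most care, is precisely this last point: arranging the factorisation of $j(\mathbb P)$ over $\mathbb P$ so that (a) a master condition exists, and (b) the quotient forcing admits a generic constructible from $V[G]$ and the embedding restricted to $H(\kappa^{++})$ — this is delicate because $\mathbb P$ is a product of two collapses with very different closure, and one has to interleave the lifting of the $\Col(\mu,\rho^+)$-part (handled by closure of the image forcing) with the absorption of $\Col(\rho^{++},\kappa)$ into $j$ of the first collapse. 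The Erdős–Rado-type argument in Remark~\ref{remark: erdos-rado for 2k colors} may also be needed to thin out the generated set to one of the exact order type, but the heart of the proof is the embedding lifting.
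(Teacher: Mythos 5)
Your overall strategy---lifting the subcompactness embedding through the forcing via master conditions and termspace absorption, and then reading off Chang's Conjecture from the pointwise image of the lifted embedding---does not work here, and it is not how the paper argues. There are two independent obstructions. First, the poset $\Col(\mu,\rho^{+})\times\Col(\rho^{++},\kappa)$ is not of the form $j(\bar{\mathbb P})$ for any $\bar{\mathbb P}$ in the domain $H(\rho^{++})$ of the subcompactness embedding: since $\rho$ is the critical point and $j(\rho)=\kappa$, the factor $\Col(\rho^{++},\kappa)$ would have to be the $j$-image of a collapse $\Col(\bar\rho^{++},\rho)$ with $j(\bar\rho)=\rho$, and no such $\bar\rho$ exists. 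Your suggestion that the source poset is ``essentially $\Col(\mu,\rho^+)$'' does not repair this, since $j(\Col(\mu,\rho^+))=\Col(\mu,\kappa^+)$, which is not the forcing in the statement. Second, and more fundamentally, even granting a lifted embedding $\hat\jmath$, a set of the form $\hat\jmath\image Y$ is closed under a function $f$ only when $f$ lies in the range of $\hat\jmath$ (so that $\hat\jmath(\bar f)(\hat\jmath(a))=\hat\jmath(\bar f(a))$); an arbitrary $f\colon(\mu^{+3})^{<\omega}\to\mu^{+3}$ in the extension is not of this form, because the domain of a subcompactness embedding is the small structure $H(\rho^{++})$ rather than $V$ or an inner model, and there is no target model $N$ containing $j\image\rho^{++}$ to which one could transfer the existence of a Chang set by elementarity. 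Closure under $f$ does not ``follow from elementarity'' here; this is precisely the step where the huge-cardinal arguments of Section 4 differ from what subcompactness can deliver.

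The paper's proof runs in the opposite direction, by contradiction. One assumes that for every $\rho<\kappa$ there is a $\Col(\mu,\rho^+)\times\Col(\rho^{++},\kappa)$-name $\dot f_\rho$ for a counterexample, codes the sequence $\langle\dot f_\rho:\rho<\kappa\rangle$ into the subcompactness predicate, and reflects to obtain $\rho$ and an elementary $j\colon\langle H(\rho^{++}),\in,\langle\dot g_\eta\rangle\rangle\to\langle H(\kappa^{++}),\in,\langle\dot f_\rho\rangle\rangle$. The universality of the counterexample (its image is large on \emph{every} set of size $\rho^{++}$) is applied to $A=j\image\rho^{++}$; one decides names for an increasing $\rho^{++}$-sequence of values of $\dot f_\rho$ on finite subsets of $A$, using the $\rho^{++}$-closure of $\Col(\rho^{++},\kappa)$ and the fact that $|\Col(\mu,\rho^+)|=\rho^+$ to stabilize the conditions, then reflects each pair of the resulting configuration downward through $j$ to a condition in some $\Col(\mu,\eta^+)\times\Col(\eta^{++},\rho)$ with $\eta<\rho$, and finally applies \Erdos-Rado to the induced coloring of $[\rho^{++}]^2$ by $\rho$-many colors to produce, in some generic extension, an increasing $(\rho^{+}+1)$-sequence of ordinals below $\rho^{+}$ --- a contradiction. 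The \Erdos-Rado step, which your proposal relegates to a possible afterthought, is the heart of the argument; the embedding is used only to reflect finite configurations and is never lifted.
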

\begin{proof}
As a warm-up, let us show first an easier fact:
\begin{claim} There is $\rho < \kappa$ such that $(\kappa^{++}, \kappa^{+})\chang (\rho^{++}, \rho^{+})$
\end{claim}
\begin{proof}
Let $\kappa$ be a $(+2)$-subcompact cardinal. Assume, toward a contradiction, that for every $\rho < \kappa$, $(\kappa^{++}, \kappa^{+})\not\chang (\rho^{++}, \rho^+)$. In particular, for every $\rho < \kappa$ we can pick a function $f_\rho\colon (\kappa^{++})^{<\omega}\to\kappa^{+}$ such that for every $A\subseteq \kappa^{++}$, $|A|=\rho^{++}$, $|f_\rho\image A^{<\omega}| = \rho^{++}$.

Let us code the sequence $\langle f_\eta \colon \eta < \kappa\rangle$ as a subset of $H(\kappa^{++})$. By the $(+2)$-subcompactness of $\kappa$, there $\rho < \kappa$ and elementary embedding:
\[j\colon \langle H(\rho^{++}), \in, \langle g_\eta \colon \eta < \rho\rangle \rangle \to \langle H(\kappa^{++}),\in,\langle f_\eta \colon \eta < \kappa\rangle\rangle\]
Let us look at $A = j\image \rho^{++}$. By the definition of $f_\rho$, $f_\rho \image A^{<\omega}$ has cardinality $\rho^{++}$. In particular, there is a sequence of finite subsets of $A$, $\langle a_\xi \colon \xi < \rho^{++}\rangle$, such that $f_\rho (a_\xi) < f_\rho(a_\zeta) < \kappa^{+}$ for all $\xi < \zeta < \rho^{++}$. Since $a_\xi$ is a finite subset of $j\image \rho^{++}$, $a_\xi = j(b_\xi)$, where $b_\xi$ is a finite subset of $\rho^{++}$. For every pair $\xi < \zeta$, by elementarity, there is some $\eta < \rho$ such that $g_\eta (b_\xi) < g_\eta (b_\zeta) < \rho^{+}$.

Let us define  for $\xi < \zeta < \rho^{++}$, $c(\xi, \zeta) = \min \{\eta < \rho \colon g_\eta (b_\xi) < g_\eta(b_\zeta)
\}$. $c$ is a coloring of the pairs of ordinals below $\rho^{++}$. By \GCH, $2^\rho = \rho^+$. By the \Erdos-Rado Theorem, there is a homogeneous subset of $\rho^{++}$ with order type $\rho^{+} + 1$, $H$. Let $\eta$ be its color.

Let us look at the sequence $\langle g_\eta (b_\xi) \colon \xi \in H\rangle$. This is an increasing sequence of length $\rho^+ + 1$ of ordinals below $\rho^+$ - a contradiction.
\end{proof}
Let us now return to the proof of the theorem, which is very similar to the proof of the claim.

Let $\mathbb{L}_\rho = \Col(\mu, \rho^+)\times \Col(\rho^{++}, \kappa)$.

Assume, towards a contradiction, that there is no such $\rho$, i.e. for every $\rho < \kappa$ there is a $\mathbb{L}_\rho$-name, $\dot{f}_\rho$ of a function from $(\kappa^{++})^{<\omega}$ to $\kappa^{+}$ (in the sense of $V$) such that for every subset of cardinality $(\rho^{++})^{V}$, $A$, we have $\Vdash |\dot{f}_\rho\image (A^{{<}\omega})| = (\rho^{++})^{V}$. The sequence $\langle \dot{f}_\rho \colon \rho < \kappa\rangle$ can be coded as a subset of $H(\kappa^{++})$.

Using the $(+2)$-subcompactness, there is $j$ and $\rho$ such that:
\[j\colon \langle H(\rho^{++}), \in, \langle \dot{g}_\eta \colon \eta < \rho\rangle \rangle \to \langle H(\kappa^{++}),\in,\langle \dot{f}_\rho \colon \rho < \kappa\rangle\rangle\]
is elementary. As before, let us look at $A = j\image \rho^{++}$.

By the assumption, $\Vdash_{\mathbb{L}_\rho} |\dot{f}_\rho\image A^{<\omega}| = (\rho^{++})^{V}$.

Let $\langle \dot{x}_\alpha \colon \alpha < \rho^{++}\rangle$ be a sequence of $\mathbb{L}_\rho$-names, such that for every $\alpha < \beta$, $\Vdash \dot{f}_\rho(\dot{x}_\alpha) < \dot{f}_\rho(\dot{x}_\beta)$. Let us pick, by induction, conditions \[p_\alpha = \langle q_\alpha, r_\alpha\rangle \in \mathbb{L}_\rho = \Col(\mu, \rho^+)\times \Col(\rho^{++}, \kappa).\] For every $\alpha$, we find $a_\alpha \in A^{{<}\omega}$ and pick $p_\alpha$ such that $p_\alpha \Vdash \dot{x}_\alpha = \check{a}_\alpha$. Moreover, using the $\rho^{++}$-closure of $\Col(\rho^{++}, \kappa)$, we may pick the sequence $\langle r_\alpha \colon \alpha < \rho^{++}\rangle$ to be decreasing. $|\Col(\mu, \rho^{+})| = \rho^{+}$ and therefore, there is an unbounded subset $J \subseteq \rho^{++}$ such that for every $\alpha \in J$, $q_\alpha = q_\star$, for some fixed $q_\star \in \Col(\mu, \rho^{+})$. By re-arranging the sequence and omitting all elements outside of $J$, we may assume that $J = \rho^{++}$.

To conclude, we can find a sequence of conditions $\langle p_\alpha \colon \alpha < \rho^{++}\rangle$ and a sequence of finite subsets of $A$, $\langle a_\alpha\colon \alpha < \rho^{++}\rangle$, such that:
\begin{enumerate}
\item For all $\alpha$, $p_\alpha = \langle q_\star, r_\alpha\rangle$, where $q_\star\in\Col(\mu, \rho^{+})$ is fixed and $r_\alpha$ is decreasing.
\item For all $\alpha < \beta < \rho^{++}$, $p_\beta \Vdash \dot{f}_\rho (\check{a}_\alpha) < \dot{f}_\rho (\check{a}_\beta)$.
\end{enumerate}
Reflecting downward for every pair $\xi < \zeta$ separately and using the fact that $a_\xi = j(b_\xi)$ for some $b_\xi \in (\rho^{++})^{<\omega}$, we get that there is some $\eta < \rho$ and some condition $s$ in $\Col(\mu,\eta^+)\times\Col(\eta^{++}, \rho)$ such that $s\Vdash \dot{g}_\eta(\check{b}_\xi) < \dot{g}_\eta(\check{b}_\zeta) < (\check{\rho}^{+})^{V}$.

Let us define a coloring, $c\colon [\rho^{++}]^2\to \rho \times V_\rho$ that assigns for each pair $\xi < \zeta < \rho^{++}$ such pair $(\eta, s) \in \rho \times V_\rho$ as above. Since $\rho$ is measurable, and in particular inaccessible, this coloring obtains only $\rho$ many colors. Therefore, by \Erdos-Rado, we have a homogeneous set $H$ of order type $\rho^{+} + 1$. Let $(\eta, s)$ be its color.

For every $\xi < \zeta$ in $H$, $s\Vdash \dot{g}_\eta(\check{b}_\xi) < \dot{g}_\eta(\check{b}_\zeta)$. We conclude that in the generic extension relative to the forcing $\Col(\mu,\eta^{+})\times\Col(\eta^{++}, \rho)$, there is a subset of $\rho^{+}$ of order type $\rho^{+} + 1$, which is impossible.
\end{proof}
A similar method can be used in order to get instances of Chang's Conjecture with larger gaps between the cardinals, starting from stronger large cardinal assumptions:
\begin{theorem}
Assume \GCH, and let $\alpha \geq 2$ be an ordinal. If there is $\kappa > \alpha$ such that $\kappa$ is $(+\alpha)$-subcompact cardinal then there is $\rho < \kappa$ such that $\langle \kappa^{+i} \rangle_{1 \leq i \leq \alpha} \chang \langle \rho^{+i} \rangle_{1 \leq i \leq \alpha}$. Moreover, if $\alpha$ is a successor ordinal, we may find $\rho < \kappa$ such that
\[\Vdash_{\Col(\rho^{+\alpha},\kappa)} \langle \rho^{+\alpha+i} \rangle_{1 \leq i \leq \alpha} \chang \langle \rho^{+i} \rangle_{1 \leq i \leq \alpha}.\]
\end{theorem}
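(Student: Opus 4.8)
The plan is to run the proof of Theorem~\ref{thm: cc at single point} with the gap $2$ replaced by $\alpha$ throughout, the first assertion playing the role of its warm‑up Claim and the second the role of its main body. For the first assertion, suppose toward a contradiction that $\langle\kappa^{+i}\rangle_{1\le i\le\alpha}\not\chang\langle\rho^{+i}\rangle_{1\le i\le\alpha}$ for every $\rho<\kappa$, and fix for each $\rho$ a function $f_\rho\colon[\kappa^{+\alpha}]^{<\omega}\to\kappa^{+\alpha}$ such that no $X\subseteq\kappa^{+\alpha}$ closed under $f_\rho$ has $|X\cap\kappa^{+i}|=\rho^{+i}$ for all $1\le i\le\alpha$. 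Code $\langle f_\rho:\rho<\kappa\rangle$, a well-ordering of $H(\kappa^{+\alpha})$, a constant naming the ordinal $\alpha$ (legitimate since $\alpha<\kappa$), and the sequence $\langle\kappa^{+i}\rangle_{i<\alpha}$ into a single $A\subseteq H(\kappa^{+\alpha})$, and apply $(+\alpha)$-subcompactness to get $\rho<\kappa$ and an elementary $j\colon\langle H(\rho^{+\alpha}),\in,\bar A\rangle\to\langle H(\kappa^{+\alpha}),\in,A\rangle$ with $\crit j=\rho$, $j(\rho)=\kappa$. Because $\alpha$ is named, $\crit j>\alpha$, so $j\restriction(\alpha+1)=\mathrm{id}$; because the well-ordering is coded, $\bar\lambda$ is forced to equal $\rho^{+\alpha}$; and $\bar A$ codes $\langle g_\eta:\eta<\rho\rangle$ with $j(g_\eta)=f_\eta$ together with $\langle\rho^{+i}\rangle_{i<\alpha}$, whence $j(\rho^{+i})=\kappa^{+i}$ for $1\le i\le\alpha$. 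Moreover $\rho$ is automatically inaccessible, since ``$\kappa$ is inaccessible'' is expressible over $\langle H(\kappa^{+\alpha}),\in\rangle$ and reflects.

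Now put $X=j\image\rho^{+\alpha}$, so that $X\cap\kappa^{+i}=j\image\rho^{+i}$ has size $\rho^{+i}$ for $1\le i\le\alpha$; but $X$ need not be closed under $f_\rho$ because $\rho=\crit j\notin X$. Let $X'$ be the closure of $X$ under $f_\rho$; then $|X'|=\rho^{+\alpha}$ and $X'\supseteq X$, so since $f_\rho$ witnesses a failure there is a least $i^*$ with $|X'\cap\kappa^{+i^*}|>\rho^{+i^*}$. One checks $1\le i^*<\alpha$ (the top level cannot be blown up, as $|X'|=\rho^{+\alpha}$) and that $i^*$ is a \emph{successor} ordinal: were $i^*$ a limit, then $\kappa^{+i^*}=\bigcup_{i<i^*}\kappa^{+i}$, and minimality of $i^*$ would give $|X'\cap\kappa^{+i^*}|=\sup_{i<i^*}\rho^{+i}=\rho^{+i^*}$. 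Hence $|X'\cap\kappa^{+i^*}|\ge\rho^{+i^*+1}$, and (just as in Theorem~\ref{thm: cc at single point}) pigeonholing over the countably many iterated compositions of $f_\rho$ yields one such composition $\tau$ and an increasing sequence $\langle s_\xi:\xi<\rho^{+i^*+1}\rangle$ in $\kappa^{+i^*}$ with $s_\xi=\tau^{f_\rho}(a_\xi)$ for distinct $a_\xi\in[X]^{<\omega}$; write $a_\xi=j(b_\xi)$, $b_\xi\in[\rho^{+\alpha}]^{<\omega}$. For each pair $\xi<\zeta$ the sentence ``$\exists\eta<\kappa\,\bigl(\tau^{f_\eta}(a_\xi)<\tau^{f_\eta}(a_\zeta)<\kappa^{+i^*}\bigr)$'' holds in $\langle H(\kappa^{+\alpha}),\in,A\rangle$ with witness $\eta=\rho$, and — using $i^*<\rho$ and the coded $\langle\kappa^{+i}\rangle_{i<\alpha}$ — it reflects through $j$ to some $\eta(\xi,\zeta)<\rho$ with $\tau^{g_{\eta(\xi,\zeta)}}(b_\xi)<\tau^{g_{\eta(\xi,\zeta)}}(b_\zeta)<\rho^{+i^*}$. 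Colouring $[\rho^{+i^*+1}]^2$ by $(\xi,\zeta)\mapsto\eta(\xi,\zeta)<\rho$ and invoking \Erdos-Rado (valid since $\rho^{+i^*}$ is regular, $2^{<\rho^{+i^*}}=\rho^{+i^*}$ by \GCH, and $\rho<\rho^{+i^*}$), we obtain a homogeneous set of order type $\rho^{+i^*}+1$, whose associated values $\tau^{g_\eta}(b_\xi)$ form a strictly increasing $(\rho^{+i^*}+1)$-sequence inside $\rho^{+i^*}$ — a contradiction.

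For the second assertion let $\alpha=\beta+1$; then $\rho^{+\alpha}$ is regular, so $\Col(\rho^{+\alpha},\kappa)$ is $\rho^{+\alpha}$-closed and $\kappa^+$-c.c., hence preserves $\rho^{+i}$ for $i\le\alpha$ and sends $\kappa^{+i}$ to $\rho^{+\alpha+i}$; therefore the displayed statement is exactly ``$\langle\kappa^{+i}\rangle_{1\le i\le\alpha}\chang\langle\rho^{+i}\rangle_{1\le i\le\alpha}$ holds in $V^{\Col(\rho^{+\alpha},\kappa)}$''. One repeats the argument above with $\dot f_\rho$ a $\Col(\rho^{+\alpha},\kappa)$-name; under \GCH\ the sequence $\langle\dot f_\eta:\eta<\kappa\rangle$ is still codeable in $H(\kappa^{+\alpha})$, and $j$ pulls it back to names $\dot g_\eta$ for $\Col(\eta^{+\alpha},\rho)$. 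Two things change, both exactly as in Theorem~\ref{thm: cc at single point}: one uses the $\rho^{+\alpha}$-closure of $\Col(\rho^{+\alpha},\kappa)$ to build, below a condition forcing $i^*$ to be the first blown-up level, a decreasing sequence of conditions $\langle p_\xi:\xi<\rho^{+i^*+1}\rangle$ deciding the relevant inputs $\check a_\xi$ and the values $s_\xi$; and the downward reflection now additionally produces a condition $s\in\Col(\eta^{+\alpha},\rho)$ forcing $\tau^{\dot g_\eta}(\check b_\xi)<\tau^{\dot g_\eta}(\check b_\zeta)<\rho^{+i^*}$. Since $\rho$ is inaccessible all such conditions lie in $V_\rho$ and $|V_\rho|=\rho$, so the colouring still has $\le\rho$ colours, \Erdos-Rado applies as before, and homogeneity produces a single pair $(\eta,s)$ and a set of order type $\rho^{+i^*}+1$ such that $s$ forces a strictly increasing $(\rho^{+i^*}+1)$-sequence below $\rho^{+i^*}$ in $V^{\Col(\eta^{+\alpha},\rho)}$, in which $\rho^{+i^*}$ is still a regular cardinal — again absurd.

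The main difficulty is bookkeeping rather than a new idea. One must arrange the witnessing data so that: (i) the failure of the \emph{multi}-cardinal principle is genuinely caught by closing a single function, with a detectable first blown-up level $i^*$, which is necessarily a successor ordinal $<\alpha$ — this is what makes $\rho^{+i^*}$ regular and the \GCH\ computations in \Erdos-Rado go through; (ii) the reflection through $j$ places the witnessing inequalities strictly below $\rho^{+i^*}$ with a colour set of size $\le\rho$, which is exactly where the named constant for $\alpha$ and the coded cardinal sequence are needed (to force $\crit j>\alpha$ and keep the parameters $\rho^{+i}$, $i\le\alpha$, in place); and (iii) in the forcing version the $\rho^{+\alpha}$-closure of $\Col(\rho^{+\alpha},\kappa)$ — which requires $\rho^{+\alpha}$ regular, i.e.\ $\alpha$ a successor — suffices to decide the finitely many names attached to each pair, while the inaccessibility of $\rho$ (which comes for free from reflection) keeps $|V_\rho|=\rho$ so that the colour set remains of size $\le\rho$.
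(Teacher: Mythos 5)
Your proposal is correct and follows essentially the same route as the paper's (much more compressed) argument: reduce to the first ``blown-up'' level $i^*$, observe that it must be a successor since limit levels follow by continuity, and at that successor level run the \Erdos-Rado coloring argument of Theorem~\ref{thm: cc at single point}, with the term-pigeonholing playing the role of the paper's normalization $A \subseteq \dot f_\rho \image A$. The only differences are cosmetic (working with the closure $X'$ versus the normalized image, and spelling out the bookkeeping about $\crit j > \alpha$ and the coded cardinal sequence that the paper leaves implicit).
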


\begin{proof}
We will only sketch the case when we collapse cardinals.  Towards a contradiction, suppose $\kappa$ is $(+\alpha)$-subcompact and there is no such $\rho < \kappa$.  Let $\langle \dot f_\eta : \eta < \kappa \rangle$ be a sequence such that $\dot f_\eta$ is a $\Col(\eta^{+\alpha},\kappa)$-name for a counterexample.  Let $\rho < \kappa$ be such that there is an elementary embedding
\[j\colon \langle H(\rho^{+\alpha}), \in, \langle \dot{g}_\eta \colon \eta < \rho\rangle \rangle \to \langle H(\kappa^{+\alpha}),\in,\langle \dot{f}_\eta \colon \eta < \kappa\rangle\rangle.\]
Let $A = j \image \rho^{+\alpha}$, and consider a $\Col(\rho^{+\alpha},\kappa)$-name for $\dot f_\rho \image A$.  We may assume that $\Vdash A \subseteq \dot f_\rho \image A$, so that $\Vdash |\dot f_\rho \image A \cap \rho^{+\alpha+i}| \geq \rho^{+i}$ for $1 \leq i \leq \alpha$.  It suffices to prove that for all successor $\beta < \alpha$, $\Vdash |\dot f_\rho \image A \cap \rho^{+\alpha+\beta}| = \rho^{+\beta}$, since the limit cases follow by continuity.  For such $\beta$, the argument proceeds exactly as before.
\end{proof}

\begin{remark}
Suppose that in the above, $\alpha$ is the successor of a limit ordinal $\lambda$.  Using the lemmas of the previous section, we can then force with $\Col(\mu,\rho^{+\lambda})$, where $\mu \leq \cf(\lambda)$, to obtain $(\mu^{+\lambda+ 1}, \mu^{+\lambda}) \chang (\mu^+,\mu)$.  This gives an alternate proof of consistency results from \cite{HayutMagidorMalitz}.
\end{remark}

In the next theorem, we will get the consistency of instances of Chang's Conjecture where the source is double successor of singular cardinal, e.g.\ $(\aleph_{\omega+2}, \aleph_{\omega+1})\chang(\aleph_{n+1}, \aleph_n)$. In order to achieve this, we need to start with slightly stronger assumption.

\begin{lemma}\label{lem: cc between successor points}
Assume $\GCH$. Let $\kappa$ be $\kappa^{++}$-supercomapct cardinal and let $\mathcal{U}$ be a normal measure on $\kappa$, derived from a supercompact embedding.
\begin{enumerate}
\item There is a set of measure one $A$ such that for all $\rho\in A$, $(\kappa^{++}, \kappa^{+})\chang(\rho^{++},\rho^{+})$. Moreover, there is a set of measure one $A^\prime$ such that for every $\rho\in A^\prime$ and every forcing notion of cardinality $\leq \rho^+$, $\mathbb{Q}$, $\mathbb{Q}\times\Col(\rho^{++}, \kappa)$ forces $(\kappa^{++}, \kappa^{+})\chang(\rho^{++},|\rho^{+}|)$.
\item For every $\rho \in A$, there is a set $B_\rho\in\mathcal{U}$ such that for every $\eta\in B_\rho$, $(\eta^{++}, \eta^{+})\chang(\rho^{++},\rho^{+})$. Moreover, for every $\rho \in A^\prime$, there $B_\rho^\prime\in\mathcal{U}$ such that for every $\eta\in B^\prime_\rho$ and every forcing notion $\mathbb{Q}$, $|\mathbb{Q}| \leq \rho^{+}$, $\mathbb{Q} \times \Col(\rho^{++}, \eta)$ forces $(\eta^{++}, \eta^{+})\chang(\rho^{++},|\rho^{+}|)$.
\item There is $C\in\mathcal{U}$ such that for every $\zeta < \xi$ in $C$, $(\xi^{++}, \xi^{+})\chang(\zeta^{++},\zeta^{+})$ and moreover for every forcing notion $\mathbb{Q}$ or cardinality $\leq \zeta^+$, $\mathbb{Q} \times \Col(\zeta^{++}, \xi)$ forces $(\xi^{++}, \xi^{+})\chang(\zeta^{++},|\zeta^{+}|)$.
\end{enumerate}
\end{lemma}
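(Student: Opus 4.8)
The plan is to fix an elementary embedding $j \colon V \to M$ witnessing the $\kappa^{++}$-supercompactness of $\kappa$, so $\crit(j) = \kappa$, $M^{\kappa^{++}} \subseteq M$, $j(\kappa) > \kappa^{++}$, and $M$ and $V$ have the same cardinals $\leq \kappa^{++}$; let $\mathcal U = \{X \subseteq \kappa : \kappa \in j(X)\}$ be the normal measure it derives and set $\lambda = j(\kappa)$. Item~(1) will be proved directly by a reflection argument shaped like the warm-up claim in the proof of Theorem~\ref{thm: cc at single point}, and items~(2) and~(3) will be soft consequences.

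For the first assertion of~(1), suppose toward a contradiction that $A := \{\rho < \kappa : (\kappa^{++},\kappa^+)\chang(\rho^{++},\rho^+)\} \notin \mathcal U$, and put $D := \kappa \setminus A \in \mathcal U$. By the equivalence in clause~(3) of the introductory Lemma (with $\kappa^+,\rho^+$ in the roles of $\kappa,\mu$), choose for each $\rho \in D$ a function $f_\rho \colon (\kappa^{++})^{{<}\omega} \to \kappa^+$ such that $|f_\rho \image X^{{<}\omega}| \geq \rho^{++}$ for every $X \in [\kappa^{++}]^{\rho^{++}}$. Applying $j$ to $\vec f = \langle f_\rho : \rho \in D\rangle$ produces $f^* := j(\vec f)_\kappa$, a function from finite sequences of $(\lambda^{++})^M$ into $(\lambda^+)^M$ such that, in $M$, $|f^* \image X^{{<}\omega}| \geq \kappa^{++}$ for every $X \subseteq (\lambda^{++})^M$ with $|X| = \kappa^{++}$. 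Apply this to $A^* := j \image \kappa^{++}$, which lies in $M$ (being the range of $j \restriction \kappa^{++} \in M^{\kappa^{++}} \subseteq M$) and has size $\kappa^{++}$: there is, in $M$ and hence in $V$, a strictly increasing sequence $\langle f^*(a_\xi) : \xi < \kappa^{++}\rangle$ with each $a_\xi \in (A^*)^{{<}\omega}$, say $a_\xi = j(b_\xi)$ for $b_\xi \in (\kappa^{++})^{{<}\omega}$. For $\xi < \zeta < \kappa^{++}$, since $\kappa \in j(D)$ and $j(\vec f)_\kappa(j(b_\xi)) = f^*(a_\xi) < f^*(a_\zeta) = j(\vec f)_\kappa(j(b_\zeta))$, the ordinal $\kappa$ witnesses in $M$ that $\exists \alpha \in j(D)\,[\,j(\vec f)_\alpha(j(b_\xi)) < j(\vec f)_\alpha(j(b_\zeta))\,]$, so by elementarity there is $\alpha \in D$ with $f_\alpha(b_\xi) < f_\alpha(b_\zeta)$ in $V$. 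Colour $\{\xi,\zeta\}$ by the least such $\alpha$, getting $c \colon [\kappa^{++}]^2 \to \kappa$; since $\GCH$ gives $2^{{<}\kappa^+} = \kappa^+$, i.e.\ $\kappa^{++} = (2^{{<}\kappa^+})^+$, the \Erdos-Rado theorem yields $H \subseteq \kappa^{++}$ of order type $\kappa^+ + 1$ homogeneous with some colour $\alpha_*$, and then $\langle f_{\alpha_*}(b_\xi) : \xi \in H\rangle$ is a strictly increasing $(\kappa^+ + 1)$-sequence of ordinals below $\kappa^+$ — a contradiction.

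For the ``moreover'' of~(1), I would run the same argument with $\mathbb Q_\rho \times \Col(\rho^{++},\kappa)$-names $\dot f_\rho$ for counterexamples, following the second half of the proof of Theorem~\ref{thm: cc at single point} for the extra combinatorics: applying $j$ yields, over $M$, a $\mathbb Q^* \times \Col(\kappa^{++},\lambda)$-name $\dot f^*$ with $|\mathbb Q^*| \leq \kappa^+$ for a counterexample, and one chooses names $\dot x_\xi$ $(\xi<\kappa^{++})$ for a strictly increasing sequence inside the forced image of $A^*$, thins the $\Col(\kappa^{++},\lambda)$-coordinates of the associated conditions to a descending chain using $\kappa^{++}$-closure, fixes the $\mathbb Q^*$-coordinate on a set of size $\kappa^{++}$ using $|\mathbb Q^*| \leq \kappa^+ < \kappa^{++}$, reflects each pair to a condition $s$ in some $\mathbb Q_\alpha \times \Col(\alpha^{++},\kappa)$, colours $\{\xi,\zeta\}$ by $(\alpha,s)$ (at most $\kappa$ values, as $\kappa$ is inaccessible), applies \Erdos-Rado to obtain $H$ of order type $\kappa^+ + 1$, and derives from its common condition a forced strictly increasing $(\kappa^+ + 1)$-sequence below the ordinal $\kappa^+$. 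The delicate point is that $\dot f^*$ is only forced to have image of size exceeding $|\kappa^+|$ as computed in the extension; but $\mathbb Q^*$ collapses $\kappa^+$ exactly to that cardinal, while $\mathbb Q^* \times \Col(\kappa^{++},\lambda)$ preserves $\kappa^{++}$ and collapses every ground-model cardinal strictly between, so the forced image already has size $\geq \kappa^{++}$, which is what is needed to feed \Erdos-Rado. I expect this bookkeeping with names, conditions and surviving cardinals to be the main obstacle.

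Items~(2) and~(3) are then formal. For~(2): the statement $(\kappa^{++},\kappa^+)\chang(\rho^{++},\rho^+)$ established in~(1) has witnessing sets of size $\rho^{++} < \kappa^{++}$, so these lie in $M$ (as short sequences of ordinals) since $M^{\kappa^{++}} \subseteq M$; thus $M$ satisfies this statement about the specific $\kappa$ and $\rho$, and by the definition of $\mathcal U$ the set $B_\rho := \{\eta < \kappa : (\eta^{++},\eta^+)\chang(\rho^{++},\rho^+)\}$ belongs to $\mathcal U$. The same works for the forcing version, yielding $B'_\rho$: the relevant forcings have size $\leq \kappa$, so their small witnessing sets in the generic extension again land in $M[G*H]$. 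For~(3): put $B_\rho := \kappa$ for $\rho \notin A$ and $B'_\rho := \kappa$ for $\rho \notin A'$, and let $C := A \cap A' \cap \triangle_{\rho<\kappa}(B_\rho \cap B'_\rho)$; then $C \in \mathcal U$ by normality, and whenever $\zeta < \xi$ lie in $C$ we have $\zeta \in A \cap A'$ and, by the diagonal intersection, $\xi \in B_\zeta \cap B'_\zeta$ — which is exactly the conclusion of~(3).
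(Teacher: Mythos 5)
Your proposal is correct and follows essentially the same route as the paper: reflect a putative $\mathcal U$-large family of (names for) counterexamples through the $\kappa^{++}$-supercompact embedding, extract an increasing $\kappa^{++}$-sequence from the image of $j\image\kappa^{++}$, reflect pairs to a coloring with $\kappa$ colors, and apply \Erdos--Rado to get an impossible increasing $(\kappa^++1)$-sequence below $\kappa^+$; then (2) follows from $M^{\kappa^{++}}\subseteq M$ and (3) from a diagonal intersection, exactly as in the paper. The only cosmetic difference is that you treat the plain and forcing versions of (1) separately and spell out the reflection step for (2), which the paper leaves implicit.
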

\begin{proof}
Let us show the first assertion. Assume otherwise, and let us pick $\dot{f}_\rho, \mathbb{Q}_\rho$ witnessing it. Namely:
\begin{enumerate}
\item $\mathbb{Q}_\rho$ is a forcing notion of cardinality $\leq \rho^+$.
\item $\dot{f}_\rho$ is a name for a function from $(\kappa^{++})^{<\omega}$ into $\kappa^+$ in the forcing $\mathbb{Q}_\rho\times\Col(\rho^{++},\kappa)$
\item For every set of cardinality $\rho^{++}$, $X\subseteq \kappa^{++}$
\[\Vdash_{\mathbb{Q}_\rho\times\Col(\rho^{++},\kappa)} |\dot{f}_\rho\image \check{X}^{<\omega}| = \rho^{++}\]
\end{enumerate}
Let us assume, without loss of generality, that $\mathbb{Q}_\rho$ is a partial order on $\rho^+$.

Let $j\colon V\to M$ be a $\kappa^{++}$-supercompact embedding such that \[\mathcal{U} = \{Y\subseteq \kappa \colon \kappa\in j(Y)\}.\] Let us work in $M$. There, by elementarity, the forcing $j(\mathbb{Q})_\kappa\times \Col(\kappa^{++}, j(\kappa))$ adds a function $j(\dot{f})_\kappa\colon j(\kappa^{++})^{<\omega}\to j(\kappa^{+})$ such that for every $X\subseteq j(\kappa^{++})$ of cardinality $(\kappa^{++})^M = \kappa^{++}$
\[\Vdash_{j(\mathbb{Q})_\kappa\times\Col(\kappa^{++},j(\kappa))} |j(\dot{f})_\kappa\image \check{X}^{<\omega}| = \check{\kappa}^{++}\]
Take $X = j\image\kappa^{++}$. Let us denote $\mathbb{L} = j(\mathbb{Q})_\kappa\times\Col(\kappa^{++},j(\kappa))$. By the same arguments of Theorem~\ref{thm: cc at single point}, we can find a sequence of conditions $p_\alpha\in \mathbb{L}$ and $a_\alpha \subseteq \kappa^{++}$, finite, such that for every $\alpha < \beta$, $p_\beta \Vdash j(\dot{f})_\kappa(\widecheck{j(a_\alpha)}) < j(\dot{f})_\kappa(\widecheck{j(a_\beta)})$. Reflecting this fact back to $V$, we obtain that for every $\alpha < \beta < \kappa^{++}$ there is an ordinal $\rho < \kappa$ and a condition $r\in\mathbb{Q}_\rho \times \Col(\rho^{++}, \kappa)$ such that \[r\Vdash_{\mathbb{Q}_\rho \times \Col(\rho^{++},\kappa)} \dot f_\rho(\check a_\alpha) < \dot f_\rho(\check a_\beta) < \check \kappa^+.\]
This defines a coloring of pairs of elements in $\kappa^{++}$ with $\kappa$ many colors. By \Erdos-Rado, there is a homogeneous set of order type $\kappa^{+}+1$, which is impossible.

The second statement follows from the reflection properties of the supercompact cardinal. For the third statement take $C = A \cap \triangle_{\rho\in A} B_\rho$.
\end{proof}
\begin{theorem}
Assume \GCH. Let $\kappa$ be $\kappa^{++}$-supercompact and let $\mu < \kappa$ be regular. There is a generic extension in which $\kappa = \mu^{+\omega}$ and \[(\kappa^{++}, \kappa^+) \chang (\mu^+, \mu).\]
\end{theorem}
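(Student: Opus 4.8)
The plan is to realise $\kappa=\mu^{+\omega}$ by a Magidor‑style collapse at $\kappa$ — a Prikry forcing guided by a normal measure, with interleaved Lévy collapses — and then to read off $(\kappa^{++},\kappa^{+})\chang(\mu^{+},\mu)$ by composing two transfers supplied by Lemma~\ref{lem: cc between successor points}: a ``horizontal'' one between two consecutive generic Prikry points, and a ``vertical'' one from $\kappa$ down to such a point.

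\emph{Setup.} Let $j\colon V\to M$ witness $\kappa^{++}$‑supercompactness and let $\mathcal U=\{X\subseteq\kappa:\kappa\in j(X)\}$. By Lemma~\ref{lem: cc between successor points}(1) and (3) fix $A',C\in\mathcal U$ carrying the respective ``moreover'' clauses, and shrink so that $C\subseteq A'$ and every $\alpha\in C$ is inaccessible and $>\mu$. Let $\mathbb S$ be the Magidor collapse: a condition has a finite increasing stem $\langle\alpha_0<\dots<\alpha_k\rangle\in C^{<\omega}$, a set $A\in\mathcal U\restriction C$ with $\min A>\alpha_k$, and collapsing conditions $c_{-1}\in\Col(\mu,\alpha_0^{+})$ and $c_i\in\Col(\alpha_i^{++},\alpha_{i+1})$ for $i<k$; one extends by appending points from $A$, shrinking $A$, and strengthening the collapses, where the collapses are carried in the large part (indexed by the points still to come) so that the direct‑extension order $\le^{*}$ is $\kappa$‑closed. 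Forcing with $\mathbb S$ adds a cofinal $\omega$‑sequence $\langle\alpha_i\rangle_{i<\omega}$ in $\kappa$, so $\cf(\kappa)=\omega$ in $V[G]$, and the interleaved collapses make $\alpha_0^{++}$ become $\mu^{+}$ and, for $i\ge1$, $\alpha_i^{+}$ become $\mu^{+2i}$ and $\alpha_i^{++}$ become $\mu^{+(2i+1)}$; hence $\kappa=\sup_i\alpha_i$ becomes $\mu^{+\omega}$. By the Prikry property, the $\kappa^{+}$‑chain condition of $\mathbb S$ and boundedness of the collapses in a condition, $\kappa^{+}$ and $\kappa^{++}$ are preserved and equal $\mu^{+\omega+1}$ and $\mu^{+\omega+2}$ in $V[G]$.

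\emph{Composing the transfers.} Route through $\alpha:=\alpha_1$. Applying the ``moreover'' of Lemma~\ref{lem: cc between successor points}(3) with $\zeta=\alpha_0$, $\xi=\alpha$ (both in $C$) and $\mathbb Q=\Col(\mu,\alpha_0^{+})$ (of size $\alpha_0^{+}$), and granting that this survives the remainder of $\mathbb S$, one gets in $V[G]$ that $(\alpha^{++},\alpha^{+})\chang(\alpha_0^{++},|\alpha_0^{+}|)$, i.e.\ $(\mu^{+3},\mu^{++})\chang(\mu^{+},\mu)$. Next, in $V$ we have $(\kappa^{++},\kappa^{+})\chang(\rho^{++},\rho^{+})$ for every $\rho\in A'$ by Lemma~\ref{lem: cc between successor points}(1); granting that this survives $\mathbb S$ for $\rho=\alpha$, one gets in $V[G]$ that $(\kappa^{++},\kappa^{+})\chang(\alpha^{++},|\alpha^{+}|)$, i.e.\ $(\mu^{+\omega+2},\mu^{+\omega+1})\chang(\mu^{+3},\mu^{++})$. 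Composing the two by transitivity of $\chang$ yields $(\kappa^{++},\kappa^{+})\chang(\mu^{+},\mu)$ in $V[G]$, as wanted. (One may equally route through $\alpha_i$ for any $i\ge1$ via a chain of horizontal transfers; and wherever the $\chang_{\kappa_0}$ hypotheses of Lemmas~\ref{preserve small} and~\ref{preserve closed} are needed they are obtained from plain $\chang$ by the subscript lemma, the cardinals in play being successors of cardinals satisfying the relevant $\GCH$ instances.)

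\emph{The main obstacle.} The real content is the two ``survives $\mathbb S$'' claims, and the harder is the vertical one, since $\mathbb S$ collapses cardinals throughout $(\alpha^{++},\kappa)$ — exactly the gap of the transfer $(\kappa^{++},\kappa^{+})\chang(\alpha^{++},\alpha^{+})$ — so neither Lemma~\ref{preserve small} (no small chain condition) nor Lemma~\ref{preserve closed} in its stated form (no $\kappa^{++}$‑closed forcing sits above $\mathbb S$) applies directly. The route I would take factors $\mathbb S$, below a condition whose stem contains $\alpha$, as $\mathbb S_{<\alpha}\times\mathbb S_{\ge\alpha}$, with $\mathbb S_{<\alpha}$ the $({<}\alpha)$‑sized collapse below $\alpha$ and $\mathbb S_{\ge\alpha}$ the Prikry‑with‑collapses forcing on $[\alpha,\kappa)$, which has the Prikry property and $\kappa$‑closed direct extension; one then identifies $\Col(\alpha^{++},\kappa)$ — the forcing named in the ``moreover'' of Lemma~\ref{lem: cc between successor points}(1) — as $\mathbb S_{\ge\alpha}$ followed by a further $\alpha^{++}$‑closed collapse (absorbing via Lemma~\ref{folk} and the termspace projection), which places $(\kappa^{++},\kappa^{+})\chang(\alpha^{++},|\alpha^{+}|)$ in an $\alpha^{++}$‑closed extension of $V[G]$, and finally pushes the instance back down to $V[G]$ by a reflection argument in the style of the proofs of Lemma~\ref{lem: cc between successor points} and Theorem~\ref{thm: cc at single point}, reading the homogeneous set for the \Erdos--Rado colouring off the generic via the Prikry property (and, where the supercompact image is needed, a guiding generic $\guidinggeneric$ for $j(\mathbb S_{\ge\alpha})$). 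It is this last pushing‑down/reflection step — not the cardinal arithmetic of the Magidor collapse — that I expect to be the crux.
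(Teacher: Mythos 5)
Your overall architecture --- a Prikry-type collapse realizing $\kappa=\mu^{+\omega}$ together with a ``vertical'' Chang transfer from $\kappa^{++}$ down to a Prikry point --- matches the paper's in spirit (though the paper routes directly through the \emph{first} point $\alpha_0$ and finishes with $\Col(\mu,\alpha_0^+)$, so your horizontal leg and the composition through $\alpha_1$ are not needed for this theorem). However, the step you yourself flag as the crux, namely that the vertical transfer survives the Prikry forcing, is exactly where your proposal has a genuine gap, and the one concrete mechanism you offer for it cannot work. You propose to identify $\Col(\alpha^{++},\kappa)$ with $\mathbb S_{\ge\alpha}$ followed by a further $\alpha^{++}$-closed collapse, so as to place the relevant instance of Chang's Conjecture in an $\alpha^{++}$-closed extension of $V[G]$. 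This is impossible: $\mathbb S_{\ge\alpha}$ adds an $\omega$-sequence cofinal in $\kappa$, whereas $\Col(\alpha^{++},\kappa)$ followed by any $\alpha^{++}$-closed forcing adds no new $\omega$-sequences of ordinals, so no such extension (nor any intermediate model of one) can contain the tail Prikry generic. The ``moreover'' clause of Lemma~\ref{lem: cc between successor points}(1) is stated for $\mathbb Q\times\Col(\rho^{++},\kappa)$ and does not transfer to $\mathbb Q\times\mathbb S_{\ge\alpha}$; the entire difficulty of the theorem is that the tail of the Prikry forcing is \emph{not} absorbable into a closed collapse, which is also why Lemma~\ref{preserve closed} cannot be invoked here.

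What is actually needed at this point is a self-contained argument, which the paper supplies and your proposal does not. One uses the Prikry property to split into the cases $X_0,X_1$ according to whether a one-point stem $\langle\alpha,g,Y,F\rangle$ forces $(\kappa^{++},\kappa^+)\chang(\alpha^{++},\alpha^+)$ or its negation, and supposes $X_1\in\mathcal U$ toward a contradiction. Applying the $\kappa^{++}$-supercompact embedding to the whole forcing, one takes the condition $j(p)_\kappa\in j(\mathbb P)$ whose first point is $\kappa$ (this is where the guiding generic $\mathcal K$ for $\Col^M(\kappa^{++},j(\kappa))$ enters), observes via the Prikry property that the relevant $\kappa^{++}$-sequence of finite subsets of $j\image\kappa^{++}$ is added by the single factor $\Col(\kappa^{++},\gamma)$ below a two-point stem, and decides its entries along a descending $\kappa^{++}$-sequence there. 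Reflecting yields a colouring of $[\kappa^{++}]^2$ by conditions of $\mathbb P$, i.e.\ by $2^\kappa=\kappa^+$ colours, to which Remark~\ref{remark: erdos-rado for 2k colors} gives only an \emph{end-homogeneous} sequence with $p_{\alpha_\xi,\alpha_\zeta}=p_{\alpha_\xi,\delta}=q_\xi$; the essential final trick is to find, using the $\kappa^+$-c.c., a single condition forcing that $\kappa^+$-many of the $q_\xi$ lie in the generic filter simultaneously, producing an increasing sequence of order type $\kappa^++1$ below $\kappa^+$ in $V[G]$ --- the contradiction. None of this appears in your write-up, and since your proposed substitute is blocked by the cofinality obstruction above, the proof is incomplete at its central point.
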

\begin{proof}
Let us consider the Prikry type forcing with collapses relative to a normal measure $\mathcal{U}$ on $\kappa$ which is a projection of $\kappa^{++}$-supercompact measure.
Let us describe explicitly the conditions in the forcing notion.

Let $j_\mathcal{U}\colon V\to M$ be the ultrapower embedding. Let us consider the forcing notion $\Col^M((\kappa^{++})^M, j(\kappa))$. This forcing is $\kappa^{+}$-closed (in $V$) and by standard counting arguments, it has only $|\mathcal{P}^M(j(\kappa))|^V = \kappa^{+}$ dense subsets in $M$. Therefore, there is an $M$-generic filter for $\Col^M((\kappa^{++})^M, j(\kappa))$ in $V$, $\mathcal{K}$.

Let us define the conditions for the forcing, $\mathbb{P}$.
\[p = \langle \alpha_0, f_0, \dots, \alpha_{n-1}, f_{n-1}, A, F \rangle\]
is a condition in $\mathbb{P}$ iff:
\begin{enumerate}
\item $\mu < \alpha_0 < \dots < \alpha_{n-1} < \kappa$.
\item $f_i\in\Col(\alpha_{i}^{++}, \alpha_{i+1})$ for $i < n - 1$, and $f_{n- 1} \in \Col(\alpha_{n - 1}^{++}, \kappa)$.
\item $A\in \mathcal{U}$, and $\min A > \alpha_{n - 1} + \rank f_{n - 1}$.
\item $F\colon A\to V$, for every $\alpha\in A$, $F(\alpha)\in\Col(\alpha^{++},\kappa)$ and $[F]_{\mathcal{U}}\in \mathcal{K}$.
\end{enumerate}
Let $p, q\in \mathbb{P}$.
\[p = \langle \alpha_0, f_0, \dots, \alpha_{n-1}, f_{n-1}, A, F\rangle\]
\[q = \langle \beta_0, g_0, \dots, \beta_{m-1}, g_{m-1}, B, G\rangle\]

$p\leq q$ iff:
\begin{enumerate}
\item $n \geq m$
\item For every $i < m$, $\alpha_i = \beta_i$.
\item For every $i < m$, $f_i \supseteq g_i$.
\item If $m \leq i <n$, then $\alpha_i\in B$ and $f_i \supseteq G(\alpha_i)$.
\item $A \subseteq B$ and for all $\alpha \in A$, $F(\alpha) \supseteq G(\alpha)$.
\end{enumerate}
We say that $p\leq^\star q$ if $p\leq q$ and they have the same length.
\begin{claim}
$\mathbb{P}$ satisfies the Prikry property. Namely, for every statement $\Phi$ in the forcing language and condition $p\in\mathbb{P}$, there is $q\leq^\star p$ that decides the truth value of $\Phi$.
\end{claim}
This is folklore, and a complete proof can be found in \cite{Faubion2012}. We remark that there are some minor differences between the presentation there and our presentation. As is standard, if $p = \langle \alpha_0, f_0, \dots, \alpha_{n}, f_n, A, F\rangle$, then $\mathbb P \restriction p \cong \Col(\alpha_{0}^{++},\alpha_1) \times \dots \times \Col(\alpha_{n-1}^{++},\alpha_{n}) \times \mathbb Q$, where $\mathbb Q$ adds no subsets of $\alpha_n^+$.

Since conditions with the same stem are compatible, $\mathbb P$ is $\kappa^+$-c.c.  Moreover, every ${<}\kappa$-sized set of conditions sharing a common stem has a lower bound with the same stem.
From the Prikry property and the chain condition, we conclude that every cardinal greater than or equal to $\kappa$ is preserved and that $\kappa$ becomes $\alpha_0^{+\omega}$.

Let $$X_0 = \{ \alpha < \kappa : \forall g \exists Y \exists F \langle \alpha, g ,Y,F \rangle \Vdash (\kappa^{++},\kappa^+) \chang (\alpha^{++},\alpha^+) \},$$
$$X_1 = \{ \alpha < \kappa : \exists g \exists Y \exists F \langle \alpha, g,Y,F \rangle \Vdash (\kappa^{++},\kappa^+) \not\chang (\alpha^{++},\alpha^+) \}.$$
By the Prikry property, either $X_0$ or $X_1$ is in $\mathcal U$. Towards a contradiction, suppose $X_1 \in \mathcal U$.  Let $\dot{f}\colon(\kappa^{++})^{<\omega}\to\kappa^{+}$ be a name for a function such that if $p_\alpha$ witnesses $\alpha \in X_1$, then $p_\alpha$ forces that for all $A \subseteq \kappa^{++}$ of size $\alpha^{++}$, $| \dot f \image A^{<\omega} | = \alpha^{++}$.

Let $j\colon V\to M$ be a $\kappa^{++}$-supercompact embedding such that the normal measure on $\kappa$ that it defines is the one that we use in the Prikry forcing.  Let $A = j \image \kappa^{++}$.
By hypothesis, there is a condition $p = j(p)_\kappa = \langle \kappa, g, Y,F \rangle \in j(\mathbb P)$ such that $p \Vdash^M_{j(\mathbb P)} | j(\dot f) \image A^{<\omega} | = \kappa^{++}$.  There is a sequence of names $\langle \dot b_\alpha : \alpha < \kappa^{++} \rangle \subseteq A^{<\omega}$ such that $p$ forces $\langle \dot f(b_\alpha) : \alpha < \kappa^{++} \rangle$ is an increasing sequence of ordinals below $j(\kappa^+)$.  The sequence of $\dot b_\alpha$'s is forced to be $j \image \vec a$, where $\vec a$ is a sequence contained in $(\kappa^{++})^{<\omega}$.  Let $q = \langle \kappa,g_0', \gamma,g_1',Y',F' \rangle \leq p$.  By the Prikry property, $\vec a$ is added by the factor $\Col(\kappa^{++},\gamma)$, and there is an extension $q'$ of $q$ of the same length that decides on a $\Col(\kappa^{++},\gamma)$-name $\dot{\tau}$ for $\vec a$.

Let $\langle r_\alpha : \alpha < \kappa^{++} \rangle$ be a descending sequence of conditions in $\Col(\kappa^{++},\gamma)$ such that $r_0 \leq g_0'$, and $r_\alpha$ decides a value $a_\alpha$ the $\alpha$-th value in the vector $\dot\tau$, $\dot{\tau}(\alpha)$. Thus we have that for each $\alpha < \beta < \kappa^{++}$,
$$\langle \kappa,r_\beta, \gamma, g_1', Y',F' \rangle \Vdash j(\dot f)(\widecheck{j( a_\alpha)}) < j(\dot f)(\widecheck{j(a_\beta)}) < j(\kappa^+).$$
Reflecting this downwards to $V$, we have the following coloring problem: For every pair of ordinals, $\alpha < \beta < \kappa^{++}$, there is a condition $p_{\alpha,\beta}$ that forces $\dot f(a_\alpha) < \dot f(a_\beta) < \kappa^+$. We want to show that this situation is impossible.

There are $2^\kappa$ many conditions in $\mathbb{P}$ and therefore we can think of the above coloring as a coloring from $[\kappa^{++}]^2$ to $2^\kappa$. By Remark~\ref{remark: erdos-rado for 2k colors} we obtain an ordinal $\delta <\kappa^{++}$ and a sequence of ordinals $\{\alpha_i \colon i < \kappa^{+}\}$, cofinal in $\delta$ such that $p_{\alpha_\xi, \alpha_\zeta} = p_{\alpha_\xi, \delta}$ for all $\xi < \kappa^{+}$.
Let us denote by $q_i = p_{\alpha_i, \delta}$. Let us claim that there is a condition $r$ that forces that the set $\{i < \kappa^{+} \colon q_i \in G\}$ (where $G$ is the generic filter) is unbounded. Otherwise, using the chain condition of the forcing, we can find an ordinal $\beta<\kappa^+$ such that $\Vdash \{i < \kappa^{+} \colon q_i \in G\} \subseteq \check\beta$. This is absurd, since $q_{\beta}$ forces the opposite statement.

Let $G$ be a generic filter containing $r$. Let $I = \{i < \kappa^{+} \colon q_i \in G\}$. Since $\kappa^{+}$ is regular in $V[G]$, $\otp I = \kappa^{+}$. For every $\xi, \zeta \in I$, $\xi < \zeta$, $p_{\alpha_\xi, \alpha_\zeta} = p_{\alpha_\xi, \delta} = q_\xi \in G$. Therefore, $V[G]\models f(a_\xi) < f(a_\zeta) < f(a_\delta) < \kappa^+$.
So, in $V[G]$, there is a sequence of ordertype $\kappa^{+} + 1$ of ordinals below $\kappa^{+}$ -- a contradiction.

Therefore, $X_0 \in \mathcal U$, and any condition of the form $\langle X_0,F \rangle$ forces $(\kappa^{++},\kappa^+) \chang (\alpha_0^{++},\alpha_0^+)$, where $\alpha_0$ is the first member of the Prikry sequence.  If we then force with $\Col(\mu,\alpha_0^+)$, we get the desired conclusion.
\end{proof}

\begin{corollary}
It is consistent relative to a $(+2)$-supercompact cardinal that for all even ordinals $\beta < \alpha \leq \omega$, $(\aleph_{\alpha+2},\aleph_{\alpha+1}) \chang (\aleph_{\beta+2},\aleph_{\beta+1})$.
\end{corollary}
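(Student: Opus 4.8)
The plan is to rerun the Prikry-type forcing with collapses from the previous theorem in the case $\mu = \aleph_0$, imposing one extra requirement on the measure-one sets appearing in conditions, and collapsing $\alpha_0$ rather than $\alpha_0^+$ at the end. So: start with a $(+2)$-supercompact $\kappa$, force $\GCH$ (which can be arranged preserving the hypothesis on $\kappa$), fix a normal measure $\mathcal U$ on $\kappa$ derived from a $\kappa^{++}$-supercompact embedding, and let $C \in \mathcal U$ be the set provided by Lemma~\ref{lem: cc between successor points}(3) (intersected, harmlessly, with the inaccessible cardinals below $\kappa$). Define $\mathbb P$ exactly as in the previous theorem, except that every measure-one set occurring in a condition is required to be $\subseteq C$. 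This leaves the Prikry property, the $\kappa^+$-c.c., and the factorization $\mathbb P \restriction p \cong \Col(\alpha_0^{++},\alpha_1) \times \cdots \times \Col(\alpha_{n-1}^{++},\alpha_n) \times \mathbb Q$ (with $\mathbb Q$ adding no subsets of $\alpha_n^+$) unchanged, but forces the generic Prikry sequence $\langle \alpha_n : n<\omega\rangle$ into $C$. For $G$ generic, the cardinals of $V[G]$ below $\kappa$ are $\alpha_0,\alpha_0^+,\alpha_0^{++},\alpha_1^+,\alpha_1^{++},\alpha_2^+,\dots$, so after collapsing with $\Col(\aleph_0,\alpha_0)$ one gets $\aleph_{2n+1} = \alpha_n^+$ and $\aleph_{2n+2} = \alpha_n^{++}$ for every $n<\omega$, together with $\aleph_\omega = \kappa$, $\aleph_{\omega+1}=\kappa^+$, $\aleph_{\omega+2}=\kappa^{++}$.

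In $V[G]$ I would establish two families of Chang instances. First, $(\kappa^{++},\kappa^+)\chang(\alpha_n^{++},\alpha_n^+)$ for every $n$: the set $X_0$ used in the previous theorem does not single out the first Prikry point, so $\langle X_0,F\rangle$ forces every $\alpha_n \in X_0$, and below this condition the conditions realizing a witness for ``$\alpha_n \in X_0$'' are dense; the small product of collapses below $\alpha_n$ preserves the instance by Lemma~\ref{preserve small} (passing to $\chang_{\alpha_n^+}$ using $\GCH$ and the regularity of the cardinals involved). Second, $(\alpha_{m+1}^{++},\alpha_{m+1}^+)\chang(\alpha_m^{++},\alpha_m^+)$ for every $m$: since $\alpha_m,\alpha_{m+1}\in C$, Lemma~\ref{lem: cc between successor points}(3) provides this indestructibly under $\mathbb R \times \Col(\alpha_m^{++},\alpha_{m+1})$ for any $\mathbb R$ of size $\le \alpha_m^+$, and taking $\mathbb R = \prod_{i<m}\Col(\alpha_i^{++},\alpha_{i+1})$ gives $\Vdash_{\mathbb P_{\le m+1}}(\alpha_{m+1}^{++},\alpha_{m+1}^+)\chang(\alpha_m^{++},\alpha_m^+)$, where $\mathbb P_{\le m+1}$ is the product of collapses appearing in a stem through $\alpha_{m+1}$. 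One then pushes this through the remaining factor $\mathbb Q$ (which adds no subsets of $\alpha_{m+1}^+$) by Lemma~\ref{preserve closed}. Transitivity of $\chang$ applied to the consecutive instances then yields $(\alpha_n^{++},\alpha_n^+)\chang(\alpha_m^{++},\alpha_m^+)$ for all $m<n<\omega$.

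Since $\Col(\aleph_0,\alpha_0)$ has size $\alpha_0$ and is therefore small relative to every target, all of these instances survive it by Lemma~\ref{preserve small}. Reading off the cardinal dictionary, the first family becomes $(\aleph_{\omega+2},\aleph_{\omega+1})\chang(\aleph_{2n+2},\aleph_{2n+1})$ for $n<\omega$ (the case $\alpha = \omega$, $\beta = 2n$) and the second becomes $(\aleph_{2n+2},\aleph_{2n+1})\chang(\aleph_{2m+2},\aleph_{2m+1})$ for $0\le m<n<\omega$ (the case $\alpha = 2n$, $\beta = 2m$); together they are exactly $(\aleph_{\alpha+2},\aleph_{\alpha+1})\chang(\aleph_{\beta+2},\aleph_{\beta+1})$ for all even ordinals $\beta<\alpha\le\omega$.

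The main obstacle is the step of pushing the consecutive instances through the tail factor $\mathbb Q$ above $\alpha_{m+1}$: being a Prikry-type forcing, $\mathbb Q$ is not closed, only $\alpha_{m+1}^{++}$-distributive, so to apply Lemma~\ref{preserve closed} one must realize $\mathbb Q$ as a complete subforcing of an $\alpha_{m+1}^{++}$-closed forcing. Here one uses that every collapse in $\mathbb Q$ has domain of size $>\alpha_{m+1}^{++}$ and that $\mathcal U$ is $\alpha_{m+1}^{++}$-complete, so that $\mathbb Q$ is a projection of its term forcing over $\mathbb P_{\le m+1}$, which is $\alpha_{m+1}^{++}$-closed (cf.\ the term-forcing lemmas of Section~\ref{sec: preliminaries}). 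Everything else is bookkeeping with the preservation lemmas and the cardinal arithmetic.
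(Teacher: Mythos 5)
Your overall strategy is the same as the paper's: run the Prikry-type forcing with interleaved collapses from the preceding theorem, obtain $(\kappa^{++},\kappa^+)\chang(\alpha_n^{++},\alpha_n^+)$ for (almost) all Prikry points $\alpha_n$ via the measure-one set $X_0$ together with Lemma \ref{preserve small}, obtain the consecutive instances $(\alpha_{m+1}^{++},\alpha_{m+1}^+)\chang(\alpha_m^{++},\alpha_m^+)$ from Lemma \ref{lem: cc between successor points}, collapse the bottom of the Prikry sequence to $\omega$, and finish by transitivity of $\chang$. (The paper simply passes to a tail $m\geq n_1$ on which both families of instances hold and collapses $\alpha_{n_1}$, rather than shrinking the measure-one sets to $C\cap X_0$ in advance; the two devices are interchangeable.)

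However, the step you single out as ``the main obstacle'' is handled incorrectly. The term forcing over $\mathbb P_{\leq m+1}$ of the Prikry tail $\mathbb Q$ is \emph{not} $\alpha_{m+1}^{++}$-closed --- it is not even countably closed, since a descending $\omega$-sequence of (names for) conditions whose stem lengths grow without bound has no lower bound; a term forcing inherits closure only when the named poset is forced to be closed, which a Prikry-type poset never is. Fortunately the obstacle is illusory: factor $\mathbb P$ at a condition whose stem reaches $\alpha_{m+2}$ rather than $\alpha_{m+1}$, so that $\mathbb P\restriction p\cong\mathbb R\times\Col(\alpha_m^{++},\alpha_{m+1})\times\Col(\alpha_{m+1}^{++},\alpha_{m+2})\times\mathbb Q'$ with $|\mathbb R|\leq\alpha_m$. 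Lemma \ref{lem: cc between successor points}(3) gives the instance after the first two factors; Lemma \ref{preserve closed} pushes it through the genuinely $\alpha_{m+1}^{++}$-closed collapse $\Col(\alpha_{m+1}^{++},\alpha_{m+2})$ (the first two factors having size $\leq\alpha_{m+1}$, hence being $\alpha_{m+1}^{++}$-c.c.); and $\mathbb Q'$ adds no subsets of $\alpha_{m+2}^+$, hence no new algebras on $\alpha_{m+1}^{++}$ and no new candidate witnesses, so nothing remains to be preserved. Note also that your intermediate claim that the tail above $\alpha_{m+1}$ ``adds no subsets of $\alpha_{m+1}^+$'' is true but insufficient by itself, since the algebras in question live on $\alpha_{m+1}^{++}$. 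With these substitutions your argument goes through and agrees with the paper's.
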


\begin{proof}
If $p = \langle \alpha_0, f_0, \dots, \alpha_{n}, f_n, A, F\rangle \in \mathbb P$ as above, then
$$\mathbb P \restriction p \cong \Col(\alpha_0^{++},\alpha_1) \times \dots \times \Col(\alpha_{n-1}^{++},\alpha_{n}) \times \mathbb P \restriction \langle \alpha_n,f_n,A,F \rangle.$$
If $G \subseteq \mathbb P$ is generic, then for some $n_0 < \omega$, we have $(\kappa^{++},\kappa^+) \chang (\alpha_m^{++},\alpha_m^+)$ for all $m \geq n_0$.  By Lemma \ref{lem: cc between successor points}, there is some $n_1 \geq n_0$ such that $(\alpha_{m+1}^{++},\alpha_{m+1}^{+}) \chang (\alpha_m^{++},\alpha_m^{+})$ for all $m \geq n_1$.  These relations continue to hold after forcing with $\Col(\omega,\alpha_{n_1})$.  By the facts mentioned after Remark \ref{remark: erdos-rado for 2k colors}, the desired statement holds in this extension.
\end{proof}

The learned reader may perceive how to use Radin forcing to extend the above result to obtain a model of ZFC in which for \emph{all} even ordinals $\beta < \alpha$, $(\aleph_{\alpha+2},\aleph_{\alpha+1}) \chang (\aleph_{\beta+2},\aleph_{\beta+1})$.  Instead of pursuing this line, we will now work towards showing the consistency of a ``denser'' global Chang's Conjecture using much stronger large cardinal assumptions, answering a question of Foreman.

\section{Global Chang's Conjecture}\label{sec: global cc}
In this section we obtain a model in which $(\kappa^{+}, \kappa) \chang (\mu^{+}, \mu)$
holds for every cardinal $\mu$ and every successor cardinal $\kappa$, starting from a huge cardinal.
\subsection{Getting Chang's Conjecture between many pairs of regular cardinals}
Towards the goal of the section, we start with a simpler task:
\[(\nu^{+}, \nu) \chang (\mu^+, \mu)\]
for all regular cardinals $\mu < \nu$.  This answers Question 7 of Foreman in \cite{Foreman2010ideals}.  Foreman asked whether a weaker statement is consistent, where we assume the larger $\nu$ is a successor cardinal.  In our model, we retain many large cardinals.  We will then force further to obtain a model in which the smaller cardinal $\mu$ can be singular.

\begin{lemma}
\label{easton projection}
Suppose $\mu<\kappa \leq \lambda < \delta$ are regular, $\kappa$ and $\delta$ are Mahlo, and
\begin{enumerate}
\item $\Vdash_{\E(\mu,\kappa)}$ ``$\dot{\mathbb Q}$ is $\mu$-closed, of size $\leq \lambda$, and preserves the regularity of $\lambda$.''
\item $\Vdash_{\E(\mu,\kappa) * \dot{\mathbb Q}}$ ``$\dot{\mathbb R}$ is $\mu$-closed and of size ${<}\delta$.''
\end{enumerate}
Then there is a projection $\pi : \E(\mu,\delta) \to (\E(\mu,\kappa) * \dot{\mathbb Q}) * (\dot{\mathbb R} \times \dot{\E}(\lambda,\delta))$ such that for all $p$, the first coordinate of $\pi(p)$ is $p{\restriction}\kappa$.
\end{lemma}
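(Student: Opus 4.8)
The plan is to build $\pi$ factor-by-factor using the structure of the Easton product $\E(\mu,\delta)$. Recall that $\E(\mu,\delta)$ is the Easton-support product of $\Col(\mu,\alpha)$ over inaccessible $\alpha \in (\mu,\delta)$; splitting at the coordinates below $\kappa$, above $\kappa$ but below $\lambda$, and above $\lambda$ gives a natural decomposition $\E(\mu,\delta) \cong \E(\mu,\kappa) \times \mathbb{S} \times \E(\lambda^+,\delta)$ up to a harmless reindexing, where $\mathbb{S}$ collects the collapses at inaccessibles in $[\kappa,\lambda]$ together with the tail of the $\Col(\mu,\alpha)$'s in $(\lambda,\delta)$ — more precisely, since $\E(\lambda,\delta)$ uses $\Col(\lambda,\alpha)$ for inaccessible $\alpha\in(\lambda,\delta)$, I first note that $\Col(\mu,\alpha)$ for such $\alpha$ absorbs $\Col(\lambda,\alpha)$ as a factor (both are $\mu$-closed of size $\alpha$, and $\Col(\mu,\alpha) \cong \Col(\mu,\lambda)\times\Col(\lambda,\alpha)$-like decompositions are available by Lemma \ref{folk}). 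So the real content is to absorb $\dot{\mathbb Q}$ and $\dot{\mathbb R}$ into the middle block of $\E(\mu,\delta)$, namely into the product of $\Col(\mu,\alpha)$ over inaccessible $\alpha$ in the interval $(\kappa,\delta)$.

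The key step is a termspace/absorption argument. Working over $\E(\mu,\kappa)$, since $\dot{\mathbb Q}$ is forced $\mu$-closed of size $\leq\lambda$ and $\lambda$ is (forced) regular, the termspace forcing $T(\E(\mu,\kappa),\dot{\mathbb Q})$ is $\mu$-closed; fixing an inaccessible $\alpha_0 \in (\kappa,\lambda]$ — here is where I use that $\kappa$ is Mahlo, to find such $\alpha_0$ above $\kappa$, or more carefully I take $\alpha_0$ to be any inaccessible in $(\kappa,\delta)$ with $\alpha_0 > \lambda$ to be safe — the termspace has size at most $\lambda^{{<}\mu}\cdot 2^{\alpha_0}$-ish which is swallowed by $\Col(\mu,\alpha_0)$ via the folklore absorption Lemma \ref{folk} after passing to its Boolean completion. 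Then by the Laver-style projection lemma stated above, $\E(\mu,\kappa)\times T(\E(\mu,\kappa),\dot{\mathbb Q})$ projects onto $\E(\mu,\kappa)*\dot{\mathbb Q}$, and since $\Col(\mu,\alpha_0)$ projects onto the $\mu$-closed termspace (being the collapse of its size), we get a projection from $\E(\mu,\kappa)\times\Col(\mu,\alpha_0)$ onto $\E(\mu,\kappa)*\dot{\mathbb Q}$. The same move, now over $\E(\mu,\kappa)*\dot{\mathbb Q}$, handles $\dot{\mathbb R}$: its termspace is $\mu$-closed of size ${<}\delta$, so it is absorbed by $\Col(\mu,\alpha_1)$ for a further inaccessible $\alpha_1\in(\alpha_0,\delta)$, using $\delta$ Mahlo to find $\alpha_1$. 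Composing, and using that $\E(\lambda,\delta)$ embeds into the remaining tail coordinates of $\E(\mu,\delta)$ as explained above (and that a product of projections is a projection, together with the fact that $\dot{\mathbb R}$ and $\dot\E(\lambda,\delta)$ act on disjoint coordinate blocks so their "product" over the base is a genuine product), yields the desired $\pi$. The requirement that the first coordinate of $\pi(p)$ be $p{\restriction}\kappa$ is automatic since we never disturb the coordinates below $\kappa$.

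The main obstacle I expect is bookkeeping the coordinates correctly: one must verify that the inaccessibles $\alpha_0 < \alpha_1$ used to absorb $\dot{\mathbb Q}$ and $\dot{\mathbb R}$ can be chosen among the index set of $\E(\mu,\delta)$ without colliding with the coordinates reserved for the image of $\E(\lambda,\delta)$, and that Easton support on $\delta$ (an inaccessible-indexed product) is compatible with peeling off finitely many named factors — this is fine because a projection only needs to be defined on a dense set and Easton support already allows full support below any regular cardinal $<\delta$. A secondary subtlety is checking that $\dot{\mathbb Q}$ preserving the regularity of $\lambda$ is exactly what is needed to guarantee the termspace of $\dot{\mathbb Q}$ has size computed correctly (so that it fits inside a single $\Col(\mu,\alpha_0)$ with $\alpha_0$ chosen just above $\lambda$); without that hypothesis the size bound could blow up past the next inaccessible. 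Once these indexing points are pinned down, each individual absorption is a routine application of Lemma \ref{folk} and the termspace projection lemma.
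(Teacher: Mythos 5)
Your overall architecture --- split $\E(\mu,\delta)$ into coordinate blocks, absorb $\dot{\mathbb Q}$ and $\dot{\mathbb R}$ into collapse factors via termspace forcing and Lemma \ref{folk}, and send the tail onto the $\E(\lambda,\delta)$ part --- matches the paper's in outline, but there is a genuine gap in how you treat the factor $\dot{\E}(\lambda,\delta)$. The target of the projection is $\mathbb P * (\dot{\mathbb R} \times \dot{\E}(\lambda,\delta))$ with $\mathbb P = \E(\mu,\kappa)*\dot{\mathbb Q}$, where $\dot{\E}(\lambda,\delta)$ is the Easton collapse \emph{as computed in the extension by} $\mathbb P$. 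Since $\mathbb P$ adds new partial functions from $\lambda$ to $\alpha$ of size ${<}\lambda$ (already $\E(\mu,\kappa)$ does, as soon as $\lambda > \mu^+$), $\Col(\lambda,\alpha)^{V^{\mathbb P}}$ properly contains $\Col(\lambda,\alpha)^{V}$ and the ground-model poset is not dense in it; so ``embedding $\E(\lambda,\delta)$ into the remaining tail coordinates'' of the ground-model product does not yield a projection onto the iteration. You apply the termspace machinery to $\dot{\mathbb Q}$ and $\dot{\mathbb R}$ but not where it is most needed. The paper's proof spends most of its effort exactly here: it shows that for each inaccessible $\alpha \in (\lambda,\delta)$ the termspace $T(\mathbb P, \dot{\Col}(\lambda,\alpha))$ is $\lambda$-closed, has size $\alpha$, and collapses $\alpha$ to $\lambda$, hence is equivalent to $\Col(\lambda,\alpha)$ by Lemma \ref{folk}; and it then builds an explicit projection $\pi_0$ from $\mathbb P$ times the Easton product of these termspaces onto $\mathbb P * \dot{\E}(\lambda,\delta)$. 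That last step is not just the coordinatewise Laver projection: given $\langle p_1, \dot q_1\rangle$ below the image of $(p_0,q_0)$, one must first find a single ground-model Easton set covering the forced domain of $\dot q_1$ (this uses $|\mathbb P| \leq \lambda < \delta$) and then mix names coordinatewise.

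This gap also shows in your reading of the hypothesis that $\dot{\mathbb Q}$ preserves the regularity of $\lambda$. It is not there to control the size of $T(\E(\mu,\kappa),\dot{\mathbb Q})$; it is there so that $\dot{\Col}(\lambda,\alpha)$ is forced to be $\lambda$-closed, which makes its termspace over $\mathbb P$ $\lambda$-closed, which is what Lemma \ref{folk} needs in order to identify that termspace with $\Col(\lambda,\alpha)$. In an argument that never considers $\Col(\lambda,\alpha)$ as computed in $V^{\mathbb P}$, this hypothesis has no role to play, which is a sign that the tail is being handled incorrectly. A smaller point: the paper draws the factor absorbing $\dot{\mathbb Q}$ from the block of coordinates of $\E(\mu,\delta)$ in $[\kappa,\lambda]$, coalesced into $\Col(\mu,\lambda)$, rather than from a coordinate $\alpha_0 > \lambda$ as you propose ``to be safe''; with your choice one must fit a termspace of size possibly $2^{\lambda}$ below the next inaccessible above $\lambda$, which need not be possible.
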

\begin{proof}
For brevity, let $\mathbb P = \E(\mu,\kappa) * \dot{\mathbb Q}$.  For every inaccessible cardinal $\alpha \in (\lambda,\delta)$, $T(\mathbb P, \dot{\Col}(\lambda,\alpha))$ is $\lambda$-closed and has size $\alpha$. By Lemma \ref{folk}, since $\mathbb P * \dot{\Col}(\lambda,\alpha)$ collapses $\alpha$ to have size $\lambda$, and since $|\mathbb{P}| < \alpha$, $T(\mathbb P, \dot{\Col}(\lambda,\alpha))$ collapses $\alpha$ to $\lambda$ as well and therefore $T(\mathbb P, \dot{\Col}(\lambda,\alpha))$ is forcing-equivalent to $\Col(\lambda,\alpha)$. Thus,
\[ \mathbb P \times \E(\lambda,\delta) \cong \mathbb P \times \prod_{\alpha \in (\lambda,\delta) \cap I}^E T(\mathbb P, \dot{\Col}(\lambda,\alpha)), \]
where $I$ is the class of inaccessible cardinals, and the superscript $E$ indicates that Easton supports are used.  We define a projection
\[ \pi_0 : \mathbb P \times \prod_{\alpha \in (\lambda,\delta) \cap I}^E T(\mathbb P, \dot{\Col}(\lambda,\alpha)) \to \mathbb P * \dot{\E}(\lambda,\delta) \]
as follows.  $\pi_0(p,q) = \langle p,\tau_q \rangle$, where $\tau_q$ is the canonical $\mathbb P$-name for a function with domain $\dom q$, and $\Vdash \forall \alpha,\, \dot \tau_q(\alpha) = \check q(\alpha)$.  To show $\pi_0$ is a projection,  suppose $\langle p_1,\dot q_1 \rangle \leq \langle p_0,\tau_{q_0} \rangle$.  Since $|\mathbb P| \leq \lambda$, there is an Easton set $X$ such that $\Vdash \dom(\dot q_1) \subseteq \check X$.  For each $\alpha \in X$, let $\sigma_\alpha$ be a $\mathbb P$-name such that $p_1 \Vdash \sigma_\alpha = \dot q_1(\check \alpha)$ and if $p \perp p_1$, $p \Vdash \sigma_\alpha= \check q_0(\check \alpha)$.  If $q_2 = \{ \langle \alpha,\sigma_\alpha \rangle : \alpha \in X \}$, then $\langle p_1,q_2 \rangle \leq \langle p_0,q_0 \rangle$, and $\langle p_1,\tau_{q_2} \rangle \leq \langle p_1,\dot q_1 \rangle$ because $p_1 \Vdash \tau_{q_2} = \dot q_1$.

Let $\rho < \delta$ be regular and such that $\Vdash_{\mathbb P} |\dot{\mathbb R}| \leq \rho$. Applying Lemma \ref{folk} coordinate-wise, we have the following sequence of projections:
\begin{align*}
\E(\mu,\delta)	&\cong \E(\mu,\kappa) \times \E(\mu,\delta) \restriction [\kappa,\delta) \\
			&\cong \E(\mu,\kappa) \times \Col(\mu,\lambda) \times \Col(\mu,\rho) \times \E(\mu,\delta) \restriction (\lambda,\delta) \\
			&\to (\E(\mu,\kappa) * \dot{\mathbb Q} * \dot{\mathbb R}) \times \E(\lambda,\delta) \\
			&\cong (\mathbb P *  \dot{\mathbb R}) \times \prod_{\alpha \in (\lambda,\delta) \cap I}^E T(\mathbb P, \dot{\Col}(\lambda,\alpha)) \\
			&\to \mathbb P * (\dot{\mathbb R} \times  \dot{\E}(\lambda,\delta)),
\end{align*}
as desired.
\end{proof}

The next lemma answers a question of Shioya \cite{Shioya2011}, who asked if $\kappa$ is a huge cardinal with target $\delta$, and $\mu< \kappa$ is regular, does $\E(\mu,\kappa) * \dot \E(\kappa,\delta)$ force $(\mu^{++},\mu^{+})\chang(\mu^+,\mu)$?  He noted in the same paper that if we allow more distance between the cardinals, then the answer is yes: for example $\E(\mu,\kappa) * \dot \E(\kappa^+,\delta)$ forces $(\mu^{+3},\mu^{++})\chang(\mu^+,\mu)$.  The next lemma covers these cases and many others.  The main issue is to understand the behavior of a potential master condition.  The argument shows that the use of posets like the Silver collapse, which is designed to get master conditions under control (see \cite{Foreman2010ideals}), is not actually needed in this context.

\begin{lemma}
\label{lemma: easton collapse give cc}
Suppose $\kappa$ is a huge cardinal, $j : V \to M$ is a huge embedding with critical point $\kappa$, $j(\kappa) = \delta$, and $\mu,\lambda$ are regular cardinals with $\mu<\kappa \leq \lambda < \delta$.  Suppose also:
\begin{enumerate}
\item $\Vdash_{\E(\mu,\kappa)}$ ``$\dot{\mathbb Q}$ is $\check\kappa$-directed-closed, of size $\leq \check\lambda$, and preserves the regularity of $\check\lambda$.''
\item $\Vdash_{\E(\mu,\kappa) * \dot{\mathbb Q}}$ ``$\dot{\mathbb R}$ is $\check\kappa$-directed-closed, of size ${<}\check\delta$.''
\end{enumerate}
Then it is forced by $(\E(\mu,\kappa) * \dot{\mathbb Q}) * (\dot{\mathbb R} \times \dot{\E}(\lambda,\delta))$ that $(\lambda^+,|\lambda|) \chang (\mu^+,\mu)$.
\end{lemma}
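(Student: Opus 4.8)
The plan is to exploit the huge embedding $j : V \to M$ together with the projection from Lemma~\ref{easton projection}. Write $\mathbb{P} = (\E(\mu,\kappa) * \dot{\mathbb Q}) * (\dot{\mathbb R} \times \dot{\E}(\lambda,\delta))$, and let $G$ be $\mathbb{P}$-generic. In $V[G]$ we need to handle an arbitrary $f : (\lambda^+)^{<\omega} \to \lambda^+$; since we have only collapsed cardinals to make $\lambda^+ = \kappa^+$ (note $\E(\mu,\kappa)$ makes $\kappa = \mu^+$, $\dot{\mathbb Q}*\dot{\mathbb R}$ is $\kappa$-closed of size $<\delta$, and $\E(\lambda,\delta)$ makes $\delta = \lambda^+$), it suffices to work in the extension and reflect a name for $f$ through $j$. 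The key point is that $j$ restricted to $\E(\mu,\kappa)$ is the identity (since $\crit(j) = \kappa$ and conditions have support below $\kappa$), and $j(\mathbb{P})$, which lives in $M$, has as a natural projection-to factor something of the form $(\E(\mu,\kappa) * \dot{\mathbb Q}) * (\dot{\mathbb R} \times \dot{\E}(\lambda,\delta)) * (\text{tail})$, where the tail is $j$ applied to the construction above $\delta$. The first step is to lift $j$ to an embedding $j : V[G] \to M[G*H]$ for a suitable generic $H$ over the tail forcing. Because $j(\kappa) = \delta$ and $\delta$ is inaccessible in $V$ with $M^\delta \subseteq M$, the tail forcing is highly closed in $V[G]$ and one builds $H$ by a master-condition/genericity argument in $V[G]$ (or by first passing to a mild further extension and absorbing, as in the folklore lemmas).

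Once $j : V[G] \to N := M[G*H]$ is obtained with $\crit(j) = \kappa$, $j(\kappa) = \delta = (\lambda^+)^{V[G]}$, and $j \image \delta \in N$ (hugeness gives $N^{j(\delta)} \cap V \subseteq N$, hence $N$ closure up to $j(\delta)$), I would set $X = j \image \delta$ in $N$, which has size $\delta$ there and satisfies $|X \cap j(\mu^+)| = |j \image \mu^+| = \mu^+$ since $j{\restriction}\mu^+ = \mathrm{id}$. Given a name $\dot f$ for the coloring and applying $j$, $X$ is closed under $j(f){\restriction}(X^{<\omega})$ because for $a \in \delta^{<\omega}$, $j(f)(j(a)) = j(f(a)) \in j \image \delta$. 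Thus in $N$ there is a set $X$ of size $\delta$ with $X \cap \delta = j\image\delta$ of the required shape, closed under $j(f)$; but $j(f)$ restricted to $j\image\delta$ is just $j \circ f \circ j^{-1}$, so pulling back, $j\image\delta$ witnesses in $N$ the statement ``$\exists X$ closed under $j(f)$ with $|X| = \delta, |X \cap \delta| = \mu^+$, $|X \cap \mu| \le \mu$''. Wait — I need this for $f$ itself, not $j(f)$. The correct move is the standard one: the set $Y = j^{-1}\image X = \delta$ won't do; instead, one argues that $N \models$ ``there is $X \subseteq j(\lambda^+)$ of size $\lambda^+$ closed under $j(f)$ with $|X \cap j(\lambda)| = \mu^+$ and $|X \cap j(\mu)| = \mu$'' — namely $X = j\image\lambda^+$ — and then pulls this back by elementarity of $j$ to get such an $X$ for $f$ in $V[G]$. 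The genuinely necessary closure facts are: $j\image\lambda^+ \in N$, $\otp(j\image\lambda^+) = \lambda^+$ computed correctly in $N$ (which needs $\lambda^+ < j(\kappa)$, true), and $j\image\mu = \mu \in N$.

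The main obstacle is constructing the lift $H$, i.e.\ showing the tail of $j(\mathbb{P})$ above $\delta$ has an $M[G]$-generic filter in $V[G]$ with a condition forcing the relevant master condition below it. This is where Lemma~\ref{easton projection} is used: it guarantees that $j(\mathbb{P})$ factors as $\mathbb{P} * \dot{\mathbb{S}}$ where $\dot{\mathbb{S}}$ is (forced to be) sufficiently directed-closed in $M[G]$ — here the hypotheses that $\dot{\mathbb Q}$ and $\dot{\mathbb R}$ are $\kappa$-directed-closed (strengthened from merely $\kappa$-closed) are exactly what is needed so that the pointwise image $j\image(G_{\dot{\mathbb Q}} * G_{\dot{\mathbb R}})$ generates a filter and admits a lower bound (a master condition) in the $\delta$-directed-closed $j(\dot{\mathbb Q}) * j(\dot{\mathbb R})$. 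One then uses that $M[G]$ has only $\delta$-many (or fewer, by GCH-style counting in $M$) dense subsets of the relevant closed tail, so a generic can be assembled in $V[G]$ by a diagonalization of length $\delta$, threading the master condition through. A secondary technical point is checking that all cardinals in $[\mu^+, \lambda^+]$ are correctly computed — that $\dot{\mathbb Q}$ preserving the regularity of $\lambda$ and $\dot{\mathbb{R}}$ having size $<\delta$ ensures $\lambda^+$ is not collapsed before the $\E(\lambda,\delta)$ stage, and that $\E(\lambda,\delta)$ is $\lambda$-closed and $\delta$-c.c.\ (Mahlo-ness of $\delta = j(\kappa)$ in $M$) so that it collapses $\delta$ to $\lambda^+$ cleanly — so that $(\lambda^+)^{V[G]} = \delta$ and $(\mu^+)^{V[G]} = \kappa$ as claimed.
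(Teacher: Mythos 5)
Your overall strategy --- lift the huge embedding through the whole iteration via the projection of Lemma \ref{easton projection} and master conditions, then transfer the witness $j\image\delta$ back by elementarity --- is the same as the paper's. But there is a genuine gap at exactly the point the paper flags as ``the main issue'': the master condition for the $\E(\lambda,\delta)$-factor. Your sketch only produces master conditions for $j(\dot{\mathbb Q})*j(\dot{\mathbb R})$, where the generic $g*h$ has size $<\delta$ and $\delta$-directed-closure applies. The generic $H\subseteq\E(\lambda,\delta)$ has size $\delta$, and $\E(j(\lambda),j(\delta))$ is only $j(\lambda)$-directed-closed; since the lemma allows $\lambda=\kappa$ (this is precisely Shioya's question), we may have $j(\lambda)=\delta=|\hat{j}\image H|$, and directed-closure then gives no lower bound for $\hat{j}\image H$. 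The paper's proof resolves this by setting $m_\alpha=\inf \hat{j}\image(H{\restriction}\alpha)$ for $\alpha<\delta$ (each a legitimate infimum of a directed set of size $<\delta$), checking that the $m_\alpha$ cohere under restriction, and then verifying by hand that $m=\bigcup_{\alpha<\delta} m_\alpha$ is still a condition --- i.e.\ that its domain is an Easton set and that each coordinate $m(\beta)$ is eventually ``frozen'' and hence a bounded partial function. None of this is routine closure, and your proposal does not supply it.

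Two secondary corrections. First, the tail of $j(\mathbb P)$ over $\mathbb P$ is \emph{not} highly closed --- the quotient of $\E(\mu,\delta)$ over $\E(\mu,\kappa)*\dot{\mathbb Q}*\dot{\mathbb R}$ is only $\mu$-closed --- so the generic $\hat G$ cannot be assembled internally in $V[G]$ by a $\delta$-length diagonalization (nor is the number of dense sets of the tail in $M[G]$ bounded by $\delta$); the paper genuinely forces further, and the conclusion is a statement about $V[G*g*(h\times H)]$ that is pulled back from the larger model by elementarity plus absoluteness. Relatedly, hugeness gives $M^{j(\kappa)}\subseteq M$, not closure under $j(\delta)$-sequences. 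Second, the final computation should read $|j\image\delta\cap j(\lambda)|=|j\image\lambda|=|\lambda|^{M'}=\mu=j(\mu)$ (since $\E(\mu,\delta)$ collapses $\lambda$ to $\mu$ in $M'$) and $|j\image\delta|=\delta=j(\kappa)=j(\mu^+)$; your ``$|X\cap j(\lambda)|=\mu^+$'' is the wrong cardinal and would not pull back to $(\lambda^+,|\lambda|)\chang(\mu^+,\mu)$.
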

\begin{proof}
Let $G * g * (h \times H)$ be $(\E(\mu,\kappa) * \dot{\mathbb Q}) * (\dot{\mathbb R} \times \dot{\E}(\lambda,\delta))$-generic. By Lemma \ref{easton projection} there is a further forcing yielding an $\E(\mu,\delta)$-generic $\hat G$ with $G * g * (h \times H) \in V[\hat G]$.  The embedding can be extended to $\bar{j} : V[G] \to M[\hat G]$.

Since $M[\hat G] \models |g*h| < \check\delta$, and $\bar{j}(\mathbb Q * \mathbb R)$ is $\delta$-directed-closed, there is a master condition $(q,r) \in \bar{j}(\mathbb Q * \mathbb R)$ below $\bar{j} \image (g * h)$.  Forcing below this, we get an extended embedding $\hat{j} : V[G*g*h] \to M[\hat G * \hat g * \hat h]$.

In $M[\hat{G} * \hat g]$, for any $\alpha < \delta$, $\hat{j} \image H {\restriction} \alpha$ is a directed subset of $E(j(\lambda),j(\alpha))^{M[\hat{G} * \hat g]}$ of size $<\delta$.  Hence we define $m_\alpha = \inf \hat{j} \image H {\restriction} \alpha$.

Note that the restriction maps are continuous in the sense that for any ordinals $\alpha < \beta < \gamma$ and any $X \subseteq \E(\alpha,\gamma)$ with a lower bound, $(\inf X) {\restriction} \beta = \inf \{ p {\restriction} \beta : p \in X \}$.  Since  $H {\restriction} \alpha = \{ p {\restriction} \alpha : p \in H {\restriction} \beta \}$ for any $\alpha < \beta < \delta$, we have:

\begin{align*}
m_\beta {\restriction} j(\alpha) & =(\inf \{ \hat{j}(p) : p \in H {\restriction} \beta \}) \restriction j(\alpha) = \inf \{ \hat{j}(p) {\restriction} j(\alpha) : p \in H {\restriction} \beta \} \\
& = \inf \{ \hat{j}(p {\restriction} \alpha) : p \in H {\restriction} \beta \}  = \inf \{ \hat{j}(p) : p \in H {\restriction} \alpha \} = m_\alpha.
\end{align*}

In $M[\hat{G} * \hat g]$, let $m =  \bigcup_{\alpha < \delta} m_\alpha$. To show that $m \in E(j(\lambda),j(\delta))^{M[\hat G * \hat g]}$, let $\gamma = \sup j \image \delta < j(\delta)$.  First note $M[\hat G * \hat g]$ thinks that $\dom m$ is an Easton subset of $\gamma$.  This is because $\dom m = \bigcup_{p \in H} \dom j(p)$, $M[\hat G] \models |H| = \delta$, and for all $p \in H$, $M[\hat G] \models$ ``$\dom j(p)$ is an Easton subset of $\gamma \setminus \delta$.''  Second, for every $\beta < \gamma$, $\dom m(\beta)$ is a bounded subset of $j(\lambda)$.  For $\beta < \gamma$, if $\alpha < \delta$ is such that $j(\alpha) > \beta$, then $m(\beta)$ is ``frozen'' by $m_\alpha$, as $m_\xi(\beta) = m_\alpha(\beta)$ for all $\xi \geq \alpha$.

Therefore if we take a generic $\hat{H} \subseteq E(j(\lambda),j(\delta))^{M[\hat{G} *\hat g]}$ over $V[\hat{G} * \hat g * \hat h]$ with $m \in \hat{H}$, then we get an extended elementary embedding $\tilde{j} : V[G * g * (h \times H)] \to M[\hat G * \hat g * (\hat h \times \hat H)]$.  If $f : \delta^{{<}\omega} \to \delta$ is in $V[G * g * (h \times H)]$, then $j \image \delta = \tilde{j} \image \delta$ is closed under $j(f)$.  In $M[\hat G * \hat g * (\hat h \times \hat H)]$, $| j \image \delta \cap j(\lambda) | = |\lambda| = \mu$, and $j \image \delta$ has size $\delta = j(\kappa)$.  Thus by elementarity, there is some $X \subseteq \delta$ of size $\kappa$ closed under $f$ and such that $|X \cap \lambda | = \mu$ in $V[G * g * (h \times H)]$.
\end{proof}

\begin{lemma}
\label{below huge}
Let $\kappa$ be huge with witnessing embedding $j : V \to M$, and let $\mathcal{U}$ be the normal measure on $\kappa$ derived from $j$.
There is $A\in\mathcal{U}$ such that for all regular cardinals $\alpha < \beta \leq \gamma < \delta$ with $\beta, \delta \in A$, and for every notion of forcing of the form $(\E(\alpha,\beta) * \dot{\mathbb Q}) * (\dot{\mathbb R} \times \dot{\E}(\gamma,\delta))$, where
\begin{enumerate}
\item $\Vdash_{\E(\alpha,\beta)}$ ``$\dot{\mathbb Q}$ is $\beta$-directed-closed, of size $\leq \gamma$, and preserves the regularity of $\gamma$,''
\item $\Vdash_{\E(\alpha,\beta) * \dot{\mathbb Q}}$ ``$\dot{\mathbb R}$ is $\beta$-directed-closed and of size ${<}\delta$.''
\end{enumerate}
forces $(\gamma^+, |\gamma|)\chang (\alpha^+, \alpha)$.
\end{lemma}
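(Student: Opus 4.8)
The plan is to obtain this by reflecting Lemma~\ref{lemma: easton collapse give cc} through the normal measure $\mathcal U$, adding a diagonal intersection to accommodate the fact that the conclusion concerns \emph{pairs} $\beta<\delta$ taken from a single set $A$. Write $\theta=j(\kappa)$, so that $M^\theta\subseteq M$ and $\mathcal U=\{X\subseteq\kappa:\kappa\in j(X)\}$. Let $\phi(\beta,\delta)$ be the first-order formula, with free variables $\beta\le\delta$, asserting that $\beta$ and $\delta$ are Mahlo and that for all regular $\alpha<\beta\le\gamma<\delta$, every $\E(\alpha,\beta)$-name $\dot{\mathbb Q}$ forced to be a $\beta$-directed-closed poset of size $\le\gamma$ preserving the regularity of $\gamma$, and every $\E(\alpha,\beta)*\dot{\mathbb Q}$-name $\dot{\mathbb R}$ forced to be a $\beta$-directed-closed poset of size $<\delta$, the poset $(\E(\alpha,\beta)*\dot{\mathbb Q})*(\dot{\mathbb R}\times\dot{\E}(\gamma,\delta))$ forces $(\gamma^+,|\gamma|)\chang(\alpha^+,\alpha)$. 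The first thing I would record is that $\phi(\beta,\delta)$ can be evaluated in $H(\delta^+)$: the posets $\E(\alpha,\beta)$ and $\dot{\E}(\gamma,\delta)$ have size $\le\delta$, and by a routine nice-name argument every quantifier over names in the displayed forcing statement (names for $\dot{\mathbb Q}$, $\dot{\mathbb R}$, for a function $\gamma^{{<}\omega}\to\gamma$, and for a subset of $\gamma$) may be restricted to names of size $\le\delta$. Hence $\phi(\beta,\delta)$ is absolute between any two inner models of enough $\ZFC$ that share the same $H(\delta^+)$.

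Next I would check that $M\models\phi(\kappa,\theta)$. By elementarity $\theta$ is Mahlo in $M$, and since $M^\theta\subseteq M$ one has $\mathcal{P}(\xi)^M=\mathcal{P}(\xi)^V$ for all $\xi\le\theta$; from this it follows that $\theta$ is regular and Mahlo in $V$ as well, that $2^\kappa<\theta$, and that $H(\theta^+)^V=H(\theta^+)^M$. Since $\kappa$ and $\theta$ are Mahlo and $j$ witnesses that $\kappa$ is huge with $j(\kappa)=\theta$, Lemma~\ref{lemma: easton collapse give cc}, applied with its ``$\delta$'' equal to $\theta$ and with $\mu,\lambda,\dot{\mathbb Q},\dot{\mathbb R}$ ranging over all admissible choices, says exactly that $V\models\phi(\kappa,\theta)$. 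By the agreement of $V$ and $M$ on $H(\theta^+)$ and the absoluteness noted above, $M\models\phi(\kappa,\theta)$.

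Then I would carry out the reflection. Set $A_0=\{\beta<\kappa:\phi(\beta,\kappa)\}$. Since $j(A_0)=\{\beta<\theta:M\models\phi(\beta,\theta)\}$ and $\kappa\in j(A_0)$ because $M\models\phi(\kappa,\theta)$, we get $A_0\in\mathcal U$. For each $\beta\in A_0$ the statement $\phi(\beta,\kappa)$ holds in $V$ and is about $H(\kappa^+)$, so $M\models\phi(\beta,\kappa)$ as well (as $H(\kappa^+)^V\subseteq M$ and the two models agree on it). Hence, putting $C_\beta=\{\delta<\kappa:\phi(\beta,\delta)\}\cup(\beta+1)$ for $\beta\in A_0$ and $C_\beta=\kappa$ otherwise, we have $\kappa\in j(C_\beta)$, so $C_\beta\in\mathcal U$. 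Let $A=A_0\cap\triangle_{\beta<\kappa}C_\beta$; by normality $A\in\mathcal U$. If $\beta<\delta$ are both in $A$, then $\beta\in A_0$ and $\delta\in C_\beta$ with $\delta>\beta$, whence $\phi(\beta,\delta)$; unpacking $\phi(\beta,\delta)$ gives precisely the conclusion of the lemma.

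The step I expect to demand the most care is the one underlying all the reflections: verifying that ``$(\E(\alpha,\beta)*\dot{\mathbb Q})*(\dot{\mathbb R}\times\dot{\E}(\gamma,\delta))$ forces $(\gamma^+,|\gamma|)\chang(\alpha^+,\alpha)$'' is a genuinely first-order property that $M$ computes correctly. This is what compels the reduction of every quantifier to nice names in $H(\delta^+)$, and it is also why one must first establish the auxiliary absoluteness facts that $j(\kappa)$ is Mahlo in $V$, that $2^\kappa<j(\kappa)$, and that $M$ correctly computes $H(j(\kappa)^+)$ and the Mahlo cardinals below $j(\kappa)$. Once those are in place, the measure-theoretic bookkeeping — one reflection for $A_0$, a uniform reflection for the $C_\beta$, and a diagonal intersection — is routine.
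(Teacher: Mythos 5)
Your proposal is correct and follows essentially the same route as the paper: define $\varphi(\beta,\delta)$, verify $V\models\varphi(\kappa,j(\kappa))$ via Lemma~\ref{lemma: easton collapse give cc}, transfer to $M$ using $M^{j(\kappa)}\subseteq M$, reflect once to get $A_0$, reflect again for each $\beta\in A_0$, and finish with a diagonal intersection. The only difference is that you spell out the absoluteness/nice-name bookkeeping that the paper leaves implicit, which is a welcome addition but not a change of method.
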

\begin{proof}
Let $\varphi(\beta,\delta)$ stand for the assertion that whenever $\alpha < \beta \leq \gamma < \delta$ are regular and
 (1) and (2) hold of $\dot{\mathbb Q}$ and $\dot{\mathbb R}$ as above, then $(\E(\alpha,\beta) * \dot{\mathbb Q}) * (\dot{\mathbb R} \times \dot{\E}(\gamma,\delta))$ forces $(\gamma^+, |\gamma|)\chang(\alpha^+, \alpha)$.

By Lemma \ref{lemma: easton collapse give cc}, $V \models \varphi(\kappa,j(\kappa)).$  Since $M^{j(\kappa)} \subseteq M$, this holds in $M$ as well.  Reflecting this statement once, we find $A_0\in\mathcal{U}$ such that $\varphi(\beta,\kappa)$ holds for all $\beta \in A_0$.  Next, for all $\beta \in A_0$, we can reflect again to find a set $A_\beta \in \mathcal U$ such that $\varphi(\beta,\delta)$ holds for all $\delta \in A_\beta$.  Take $A = A_0 \cap \triangle_{\beta \in A_0} A_\beta$.
\end{proof}

Let $\kappa$ be a huge cardinal, with $A \subseteq \kappa$ as above, and without loss of generality assume $A$ contains only ${<}\kappa$-supercompact cardinals. Let $\langle \alpha_i : i < \kappa \rangle$ be the increasing enumeration of the closure of  $A \cup \{\omega\}$.  We define an Easton-support iteration $\langle \mathbb P_i, \dot{\mathbb Q}_j : i \leq \kappa, j < \kappa \rangle$ as follows:

If $\alpha_i$ is regular, $\Vdash_i \dot{\mathbb Q}_i = \dot{\E}(\alpha_i,\alpha_{i+1})$.  If $\alpha_i$ is singular, $\Vdash_i \dot{\mathbb Q}_i = \dot{\E}(\alpha_i^+,\alpha_{i+1})$.  Note that since each cardinal in $A$ is sufficiently supercompact, the cardinality assumptions of Lemma \ref{below huge} are satisfied at singular limits.  If $i<j$, then $\mathbb P_{j+1}$ forces $(\alpha_{j+1},\beta)\chang(\alpha_i^+,\alpha_i)$, where $\beta$ is the cardinal predecessor of $\alpha_{j+1}$ in $\mathbb P_{j+1}$.  By Lemma \ref{preserve closed}, this is preserved by the tail $\mathbb P_\kappa / \mathbb P_{j+1}$, which is $\alpha_{j+1}$-closed.

After forcing with $\mathbb P_\kappa$, we have:
\begin{claim}\label{cc between all successors of regulars}
$V_\kappa^{\mathbb{P}_\kappa} \models (\alpha^+,\alpha) \chang (\beta^+,\beta)$ for all pairs of regular cardinals $\beta < \alpha$.  Furthermore, this is preserved by $\Col(\gamma_0,\gamma_1)$, whenever $\beta \leq \gamma_0 \leq \gamma_1 \leq \alpha$ are regular (where $\beta < \alpha$).
\end{claim}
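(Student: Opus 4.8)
The plan is to verify the transfer relation separately for each pair of regular cardinals $\beta<\alpha$ of $V_\kappa^{\mathbb P_\kappa}$, reducing it to a single application of Lemma~\ref{below huge} to a tail of $\mathbb P_\kappa$, followed by Lemma~\ref{preserve closed} to absorb the remaining $\alpha^+$-closed tail --- this is exactly the pattern of the discussion just before the claim. The only genuine work is the bookkeeping of which cardinal is which.

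First I would record the cardinal structure of the extension. The intervals $[\alpha_i,\alpha_{i+1})$ partition $[\omega,\kappa)$; each $\dot{\mathbb Q}_i$ is an Easton collapse with critical cardinal $r_i\in\{\alpha_i,\alpha_i^+\}$ (namely $r_i=\alpha_i$ if $\alpha_i$ is regular in $V$, and $r_i=\alpha_i^+$ otherwise, in which case $\alpha_i$ stays singular), and since $\mathbb P_i$ is small and $\mathbb P_\kappa/\mathbb P_{i+1}$ is $\alpha_{i+1}$-closed, in $V_\kappa^{\mathbb P_\kappa}$ the unique regular cardinal in $[\alpha_i,\alpha_{i+1})$ is $r_i$, with $r_i^+=\alpha_{i+1}$ there; moreover $\alpha_{i+1}\in A$ for every $i$, since $i+1$ is a successor ordinal and the (continuous) enumeration of the closure of $A\cup\{\omega\}$ has a member of $A$ at every successor index. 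Hence an arbitrary pair of regular cardinals $\beta<\alpha$ of $V_\kappa^{\mathbb P_\kappa}$ is a pair $\beta=r_n$, $\alpha=r_m$ with $n<m$. Write $\mathbb P_{m+1}=\mathbb P_n*\dot{\mathbb S}$, where over $V_\kappa^{\mathbb P_n}$ we have $\dot{\mathbb S}=(\dot{\E}(r_n,\alpha_{n+1})*\dot{\mathbb Q})*\dot{\E}(r_m,\alpha_{m+1})$ with $\dot{\mathbb Q}=\mathbb P_m/\mathbb P_{n+1}$ the segment of the iteration over the stages in $[n+1,m)$. Since $\mathbb P_n$ is small, $\kappa$ remains huge in $V_\kappa^{\mathbb P_n}$ and the conclusion of Lemma~\ref{below huge} holds there; apply it to $\dot{\mathbb S}$ with $\alpha'=r_n$, $\beta'=\alpha_{n+1}$, $\gamma=r_m$, $\delta=\alpha_{m+1}$ and $\dot{\mathbb R}$ trivial: these four cardinals are regular with $\alpha'<\beta'\le\gamma<\delta$ and $\beta',\delta\in A$; $\dot{\E}(r_n,\alpha_{n+1})$ is the leading factor; $\dot{\mathbb Q}$ is $\alpha_{n+1}$-directed-closed (its first step is $\dot{\E}(\alpha_{n+1},\alpha_{n+2})$ since $r_{n+1}=\alpha_{n+1}$), has size at most $\alpha_m\le\gamma$, and preserves the regularity of $\gamma$ (the iteration up to stage $m$ is $\alpha_m$-c.c., as usual for Easton iterations); and $\dot{\E}(r_m,\alpha_{m+1})=\dot{\mathbb Q}_m$ is the final factor. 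So $\dot{\mathbb S}$ forces $(\gamma^+,|\gamma|)\chang(\alpha'^+,\alpha')$ over $V_\kappa^{\mathbb P_n}$, i.e.\ $(\alpha^+,\alpha)\chang(\beta^+,\beta)$ since in the extension $|\gamma|=r_m$, $\gamma^+=\alpha_{m+1}$ and $\alpha'^+=\alpha_{n+1}$; hence $\mathbb P_{m+1}$ forces $(\alpha^+,\alpha)\chang(\beta^+,\beta)$ over $V_\kappa$. Finally $\mathbb P_\kappa/\mathbb P_{m+1}$ is an Easton iteration of posets each $\alpha_{m+1}$-directed-closed, hence $\alpha_{m+1}$-closed, so Lemma~\ref{preserve closed} (with the $\alpha_{m+1}$-c.c.\ forcing $\mathbb P_{m+1}$ and $\kappa_1=\alpha_{m+1}=\alpha^+$) gives $(\alpha^+,\alpha)\chang(\beta^+,\beta)$ in $V_\kappa^{\mathbb P_\kappa}$.

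For the ``furthermore'', fix regular $\beta\le\gamma_0\le\gamma_1\le\alpha$ with $\beta<\alpha$. If $\gamma_0=\gamma_1$ the forcing is trivial. If $\gamma_0<\gamma_1=\alpha$, then $\Col(\gamma_0,\gamma_1)$ collapses $\alpha$ to cardinality $\gamma_0$ while preserving $\alpha^+$, and $(\alpha^+,\alpha)\chang(\beta^+,\beta)$ then holds in the extension for the trivial reason that one may take the witness $X$ to contain all of $\alpha$. So assume $\gamma_1<\alpha$, whence $\Col(\gamma_0,\gamma_1)$ has size below $\alpha$ and preserves $\alpha$ and $\alpha^+$. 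The idea is that $\mathbb P_\kappa*\dot\Col(\gamma_0,\gamma_1)$ again admits a factorisation of the type required by Lemma~\ref{below huge} with $\alpha'=\beta$, $\gamma=\alpha$, $\delta=\alpha^+$: since $\Col(\gamma_0,\gamma_1)$ acts on the interval $[\gamma_0,\gamma_1]\subseteq[\beta,\alpha)$ it commutes with the top factor $\dot{\E}(r_m,\alpha_{m+1})$, and --- using the absorption Lemmas~\ref{folk} and \ref{easton projection} together with the $\gamma_0$-closure of $\Col(\gamma_0,\gamma_1)$ --- it can be folded into the factor $\dot{\mathbb Q}$ (when $\gamma_0\ge\alpha_{n+1}$) or, when $\gamma_0=\beta$, merged into a rewriting of the initial segment of $\mathbb P_\kappa$ up to and including $\dot{\E}(r_n,\alpha_{n+1})$; one then argues exactly as above, using Lemma~\ref{preserve closed} for the remaining tail. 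I expect the main obstacle to be precisely this last piece of bookkeeping --- verifying that adjoining $\Col(\gamma_0,\gamma_1)$, especially when $\gamma_0=\beta$ so that $(\beta^+)^{V_\kappa^{\mathbb P_\kappa}}$ is destroyed and the cardinal structure of the interval $[\beta,\gamma_1]$ changes, still leaves a forcing of exactly the syntactic shape to which Lemma~\ref{below huge} applies --- whereas all of the Chang-theoretic content is carried by Lemmas~\ref{below huge} and \ref{preserve closed}.
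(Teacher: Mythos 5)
Your treatment of the first sentence of the claim is correct and is essentially the paper's own argument: isolate the block $\mathbb P_{m+1}/\mathbb P_n=(\E(r_n,\alpha_{n+1})*\dot{\mathbb Q})*\dot{\E}(r_m,\alpha_{m+1})$, apply Lemma~\ref{below huge} (with $\dot{\mathbb R}$ trivial), and absorb the $\alpha_{m+1}$-closed tail with Lemma~\ref{preserve closed}. Your bookkeeping of the cardinals $r_i$ and the observation that $\alpha_{i+1}\in A$ at successor indices are both right, and your level of care about relativizing Lemma~\ref{below huge} to $V^{\mathbb P_n}$ matches the paper's.

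The ``furthermore'' is where the proposal has genuine gaps. First, the case $\beta<\gamma_0<\gamma_1=\alpha$ is not trivial: a witness $X\supseteq\alpha$ satisfies $|X\cap\alpha|=|\alpha|=\gamma_0>\beta$, so it does not witness $(\alpha^+,\alpha)\chang(\beta^+,\beta)$; your main argument then explicitly assumes $\gamma_1<\alpha$, so this case is left uncovered. The mechanism you are missing --- and the reason Lemma~\ref{below huge} carries the extra factor $\dot{\mathbb R}$ and states its conclusion as $(\gamma^+,|\gamma|)\chang(\alpha'^+,\alpha')$ rather than with $\gamma$ itself --- is that $\Col(\gamma_0,\gamma_1)$ should be placed in the $\dot{\mathbb R}$ slot: unlike $\dot{\mathbb Q}$, the factor $\dot{\mathbb R}$ is not required to preserve the regularity of $\gamma$; it only needs to be $\alpha_{n+1}$-directed-closed (true, since $\gamma_0>\beta$ and $\gamma_0$ regular in the extension force $\gamma_0\geq\alpha_{n+1}$) and of size $<\alpha_{m+1}$. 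This one move handles every case with $\gamma_0>\beta$ uniformly, including $\gamma_1=\alpha$, and renders your folding into $\dot{\mathbb Q}$ unnecessary. Second, for $\gamma_0=\beta<\gamma_1$ your proposed ``rewriting of the initial segment'' does not work as stated: $\E(\beta,\alpha_{n+1})\times\Col(\beta,\gamma_1)$ together with the intervening Easton collapses is not of the literal shape $\E(\beta,\cdot)$ followed by sufficiently directed-closed forcing that Lemma~\ref{below huge} demands, and the proof of Lemma~\ref{lemma: easton collapse give cc} uses the precise product structure of the first factor (via the master condition $m$), so one cannot substitute a merely forcing-equivalent collapse. The clean repair bypasses Lemma~\ref{below huge} here entirely: the first part of the claim already gives $(\alpha^+,\alpha)\chang(\gamma_1^+,\gamma_1)$ in $V^{\mathbb P_\kappa}$ (indeed $\chang_{\gamma_1}$, by the remarks following Lemma~\ref{preserve small}, as all four cardinals are regular), and $\Col(\beta,\gamma_1)$ has the $\gamma_1^+$-c.c., so Lemma~\ref{preserve small} preserves this instance; in the extension $|\gamma_1|=\beta$ and $\gamma_1^+=\beta^+$, which is exactly the required statement.
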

We do not exclude the cases in which $\beta = \gamma_0$ or $\alpha = \gamma_1$. Note that if both equations hold then in the generic extension $\beta^{+} = \alpha^{+}$ and $|\alpha| = \beta$. In this case the assertion holds trivially.

The stronger claim holds because if $\beta$ is the next regular cardinal $\geq |\mathbb P_i|$ and $\alpha$ is such for $\mathbb P_j$, then it is forced by $\mathbb P_i$ that $(\mathbb P_{j+1} / \mathbb P_i)\times \Col(\gamma_0,\gamma_1)$ has a form satisfying the hypotheses of Lemma \ref{below huge}. As $\Col(\gamma_0,\gamma_1)$ is $\alpha^+$-c.c.\ and $\mathbb P_\kappa / \mathbb P_{j+1}$ is $\alpha^+$-closed, Lemma \ref{preserve closed} implies this instance of Chang's Conjecture continues to hold in $V^{\mathbb P_\kappa \ast \Col(\gamma_0,\gamma_1)}$.

\begin{claim}\label{claim:preserving almost hugeness}
Assume that $\kappa$ is almost huge cardinal with a witnessing elementary embedding $j$ such that $\delta=j(\kappa)$ is Mahlo and $\kappa \in j(A)$. Assume also that $\sup j\image \delta = j(\delta)$. Then in the generic extension by $j(\mathbb{P}_\kappa)$, $j$ extends to an elementary embedding witnessing that $\kappa$ is almost huge.
\end{claim}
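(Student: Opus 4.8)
The plan is to extend the almost-huge embedding $j\colon V\to M$ through the iteration $j(\mathbb P_\kappa)$ by the standard master-condition-plus-lifting technology, being careful to use the directed-closure of the relevant factors to build master conditions and the closure properties of $M$ (for almost hugeness, $M^{<j(\kappa)}\subseteq M$) to keep the extended embedding's target model correct. First I would factor $j(\mathbb P_\kappa)$ as $\mathbb P_\kappa * \dot{\mathbb P}_{[\kappa,\delta)}$, where the tail $\dot{\mathbb P}_{[\kappa,\delta)}$ is the Easton-support iteration from $\kappa$ to $\delta$ computed in $V^{\mathbb P_\kappa}=M^{\mathbb P_\kappa}$ (agreement holds since $\mathbb P_\kappa\in V_\kappa$ and $M$ is closed enough). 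Since $\kappa\in j(A)$, the stage-$\kappa$ factor of $j(\mathbb P_\kappa)$ is $\dot{\mathbb E}(\kappa,\alpha)$ for the appropriate next point $\alpha$ of the closure of $j(A)\cup\{\omega\}$ above $\kappa$, so the tail from $\kappa$ onward is $\kappa$-directed-closed in $M^{\mathbb P_\kappa}$.

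The main steps in order: (1) Force with $\mathbb P_\kappa$ over $V$ to get $G_\kappa$; this is also $M$-generic, and $j$ restricted to $V_\kappa$ is the identity, so $j\restriction V[G_\kappa]$ automatically lifts on the part below $\kappa$ and we need to hit the tail generically. (2) Force with the tail $\mathbb P_{[\kappa,\delta)}$ over $V[G_\kappa]$ to obtain $G_{\rm tail}$, so $j(G_\kappa)\restriction\delta = G_\kappa * G_{\rm tail}$ is what we must continue past $\delta$. (3) Build the master condition: in $M[G_\kappa * G_{\rm tail}]$ the pointwise image $j\image G_{\rm tail}$ is a directed subset of the tail $j(\mathbb P_\kappa)_{[\delta,j(\delta))}$ of size $<j(\kappa)$ (here one uses that $\mathbb P_{[\kappa,\delta)}$ has size $\delta<j(\kappa)$ and $M$ is $<j(\kappa)$-closed so the image is in $M$ and the iteration is $j(\kappa)$-directed-closed there), hence has a lower bound $q^*$. (4) Force below $q^*$ with the remaining part $j(\mathbb P_\kappa)_{[\delta,j(\delta))}$ to obtain $G^*$ and lift to $\tilde j\colon V[G_\kappa]\to M[G_\kappa * G_{\rm tail} * G^*]$, checking elementarity and that the target is the right model. (5) Verify that in the extension $\kappa$ remains almost huge: the extended $\tilde j$ has critical point $\kappa$, $\tilde j(\kappa)=\delta$, and one checks $M[G_\kappa*G_{\rm tail}*G^*]^{<\delta}\subseteq M[G_\kappa*G_{\rm tail}*G^*]$ in the relevant generic extension using the $<\delta$-closure of the iteration factors above each stage together with the $\delta$-c.c. of $j(\mathbb P_\kappa)$ (Easton support at the Mahlo cardinal $\delta$ gives the chain condition).

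The delicate point — and the one I would spend the most care on — is \emph{continuity}, i.e. that $\sup j\image\delta = j(\delta)$ is genuinely used so that the union of the master conditions forms a legitimate condition, exactly as in the proof of Lemma~\ref{lemma: easton collapse give cc}: one writes $q^* = \bigcup_{\alpha<\delta} q^*_\alpha$ where $q^*_\alpha = \inf j\image(G_{\rm tail}\restriction\alpha)$, and one must check that $\dom q^*$ is an Easton subset of $\sup j\image\delta$ and that each coordinate $q^*(\beta)$ is a bona fide condition — both facts hinge on the cofinality hypothesis $\sup j\image\delta=j(\delta)$, since otherwise the domain could be cofinal in $j(\delta)$ in the wrong way. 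A secondary subtlety is the bookkeeping at singular stages $\alpha_i$ of the closure of $A$: there the iterand is $\dot{\mathbb E}(\alpha_i^+,\alpha_{i+1})$ rather than $\dot{\mathbb E}(\alpha_i,\alpha_{i+1})$, and one must confirm these stages do not disrupt the directedness/closure accounting for the master condition — but since $A$ consists of $<\kappa$-supercompact cardinals the relevant cardinal arithmetic goes through, as already noted before Claim~\ref{cc between all successors of regulars}. Everything else is the routine lifting template, so I would present the above as the skeleton and only expand the continuity computation in full.
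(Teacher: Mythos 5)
There is a genuine gap, and it sits exactly at the crux of the almost-huge (as opposed to huge) case. In step (3) you assert that $j\image G_{\rm tail}$ is a directed set of size ${<}j(\kappa)$ because ``$\mathbb P_{[\kappa,\delta)}$ has size $\delta<j(\kappa)$''; but by hypothesis $\delta=j(\kappa)$, so the tail iteration and its generic have size exactly $j(\kappa)$. Since $j$ is only almost huge, $M$ is closed merely under ${<}j(\kappa)$-sequences, so $j\image G_{\rm tail}$ need not belong to $M[G]$ and, even if it did, a directed set of size $\delta$ in a $\delta$-closed poset need not have a lower bound. Consequently the single master condition $q^*$ of steps (3)--(4) does not exist, and the construction does not get off the ground. (This is precisely the situation that Lemma \ref{lemma: easton collapse give cc} avoids: there the relevant piece $g*h$ really does have size ${<}\delta$ and $M^\delta\subseteq M$.) A second, independent problem is that your step (4) obtains the lift only after forcing further with $j(\mathbb P_\kappa)_{[\delta,j(\delta))}$ over $V[G]$, whereas the claim requires the lifted embedding to exist in the extension by $j(\mathbb P_\kappa)$ itself; you would need an extra absoluteness argument to pull the witnessing objects back into $V[G]$, and you do not supply one.

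The paper's proof circumvents both issues by \emph{constructing} the $M[G]$-generic filter $H$ for the $\delta$-closed tail $j(\hat{\mathbb P})\restriction[\delta,j(\delta))$ inside $V[G]$, rather than forcing for it. One uses only the partial master conditions $m_\alpha=\bigcup_{p\in G\cap V_\alpha} j(p)$ for $\alpha<\delta$, each of which lies in $M[G]$ because only $\alpha<\delta$ applications of $j$ are involved; one then observes that $j(\hat{\mathbb P})$ is $j(\delta)$-c.c.\ in $M$, that $|j(\delta)|^V=\delta$, and that $\sup j\image\delta=j(\delta)$ forces every maximal antichain of the tail to be bounded, so that $V[G]$ can enumerate all of $M$'s maximal antichains in a $\delta$-sequence $\langle\mathcal A_i:i<\delta\rangle$ with $\mathcal A_\alpha\subseteq j(\hat{\mathbb P}\restriction\alpha)$. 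The $\delta$-closure of the tail then lets one weave a descending sequence $\langle q_i:i<\delta\rangle$ with $q_\alpha\leq m_\alpha$ meeting every $\mathcal A_\alpha$; the filter it generates is $M[G]$-generic and Silver's criterion gives $\tilde j:V[G]\to M[G][H]$ inside $V[G]$. Your instinct about where $\sup j\image\delta=j(\delta)$ enters is partially right, but its real role here is to bound the antichains for this internal genericity argument, not to glue a single master condition.
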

\begin{proof}
Let $\hat{\mathbb{P}} = j(\mathbb{P}_\kappa)$. Let us analyse the forcing notion $j(\hat{\mathbb{P}})$.

$\hat{\mathbb{P}}$ is defined as an Easton support iteration of Easton collapses, between the elements in the closure of the set $j(A)$. Note that since $M \cap V_{\delta} = V_{\delta}$, $V$ and $M$ compute this iteration in the same way. $j(\hat{\mathbb P})$ is an Easton support iteration in the model $M$, of length $j(\delta)$, between the points in the closure of $j^2(A)$. Note that \[j^2(A) \cap \delta = j(j(A) \cap \kappa) = j(A).\] Therefore, $j(\hat{\mathbb{P}}) = \hat{\mathbb{P}} \ast \dot{\mathbb{Q}}$ where  $\dot{\mathbb{Q}}$ is forced to be a $\delta$-closed forcing notion.

Let $G \subseteq \hat{\mathbb{P}}$ be a $V$-generic filter. In $V[G]$, we will define a filter $H \subseteq j(\hat{\mathbb{P}})$ generic over $M[G]$ such that for every $p\in G$, $j(p) \in G \ast H$.

We imitate the proof of Lemma~\ref{lemma: easton collapse give cc}. For every $\alpha < \delta$ inaccessible, let $m_\alpha = \bigcup_{p\in G \cap V_\alpha} j(p)$.
Since we apply $j$ on $\alpha$ many elements and $\alpha < \delta$, $m_\alpha \in M[G]$. Also, for every $\alpha < \beta$, $m_\beta \restriction j(\alpha) = m_\alpha$.

$\hat{\mathbb{P}}$ is $\delta$-c.c.\ and therefore $M$ models that $j(\hat{\mathbb{P}})$ is $j(\delta)$-c.c. As $\delta$ is inaccessible, $M[G]$ can compute an enumeration of the set of all maximal antichains of $j(\hat{\mathbb{P}})\restriction [\delta, j(\delta))$ in a sequence of length $j(\delta)$. $|j(\delta)|^V = \delta$, so $V[G]$ can enumerate those maximal antichains in a sequence of length $\delta$. Since all those antichains are bounded below $j(\delta)$, we may pick an enumeration $\langle \mathcal{A}_i \colon i < \delta\rangle \in V[G]$ in which $\mathcal{A}_\alpha$ is a maximal antichain in $j(\hat{\mathbb{P}} \restriction \alpha)$ (here we use the fact that $\sup j\image \delta = j(\delta)$).

Let us define a decreasing sequence of conditions $\langle q_i \colon i < \delta\rangle \subseteq \mathbb{Q}$. We require that $q_\alpha \leq m_\alpha$, $q_\alpha \in \mathcal{A}_\alpha$ and that the support of $q_\alpha$ is a subset of $j(\alpha)$. For every $\alpha < \delta$, the sequence $\langle q_\beta \colon \beta < \alpha\rangle$ is a member of $M$. By the $\delta$-closure of $\mathbb{Q}$, one can always pick a condition $q_\alpha$ stronger than all previous conditions. By the properties of $\mathbb{Q}$, it is clear that one can choose $q_\alpha$ to have support which is contained in the union of the supports of $q_\beta$, $\beta < \alpha$.

Let $H$ be the filter generated by $\{q_i \colon i < \delta\}$. By the construction of this sequence, $H$ meets every maximal antichain in $M$ of the forcing notion $j(\hat{\mathbb{P}})$. Therefore, it is $M[G]$-generic.

By Silver's criteria, $j\colon V\to M$ extends to an elementary embedding $\tilde{j}\colon V[G] \to M[G][H]$. Let us claim that $M[G][H]^{{<}\delta} \subseteq M[G][H]$. Indeed, let $\langle \dot{x}_i \colon i < \alpha\rangle$ be a sequence of names of elements of $M$ and assume that $\alpha < \delta$. Without loss of generality, we may assume that $\dot{x_i}$ is a name of an ordinal for every $i < \alpha$. By the chain condition of $\hat{\mathbb{P}}$, this sequence can be encoded as a set of ordinals of cardinality $<\delta$, and therefore belongs to $M$. Since $G\in M[G][H]$, we conclude that also its realization is in $M$, as needed.
\end{proof}

In fact, for our goals it is sufficient to note only that some of the supercompactness of $\kappa$ is preserved.

For the next section we need a stronger version of Lemma~\ref{lemma: easton collapse give cc} and Claim~\ref{cc between all successors of regulars}. We will need to know that a stronger type of reflection holds between pair of elements from $A$.

\begin{definition}[Magidor-Malitz Quantifiers]
Let $M$ be a model over the language $\mathcal{L}$. We enrich $\mathcal{L}$ with the quantifiers $Q^n$ with the following interpretation:
\[ M \models Q^n x_0, \dots x_{n-1} \varphi(x_0, \dots, x_{n-1}, p)\]
iff
\[\exists I \subseteq M,\, |I| = |M|,\, \forall a_0,\dots, a_{n-1}\in I,\, M \models \varphi(a_0, \dots, a_{n-1}, p)\]
\end{definition}

A set $I\subseteq M$ satisfying
\[\forall a_0,\dots, a_{n-1}\in I,\, M \models \varphi(a_0, \dots, a_{n-1}, p)\]
is called a $\varphi$-block.

The Magidor-Malitz quantifiers were defined by Menachem Magidor and Jerome Malitz in \cite{MagidorMalitz1977}. In this paper, they showed that under $\diamondsuit(\aleph_1)$ a certain compactness theorem holds for the language $\mathcal{L}(Q^{<\omega})$ - the first order logic extended by adding the Magidor-Malitz quantifiers.

We say that $A\prec_{Q^n} B$ if $A$ is $\mathcal{L}(Q^n)$-elementary substructure of $B$. We write $\mu\chang_{Q^n} \nu$ if for every model $B$ of cardinality $\mu$, there is a $Q^n$-elementary submodel $A$ of cardinality $\nu$.

\begin{lemma}
Suppose $\kappa$ is a huge cardinal, $j : V \to M$ is a huge embedding with critical point $\kappa$, $j(\kappa) = \delta$, and $\mu,\lambda$ are regular cardinals with $\mu<\kappa \leq \lambda < \delta$.  Suppose also:
\begin{enumerate}
\item $\Vdash_{\E(\mu,\kappa)}$ ``$\dot{\mathbb Q}$ is $\kappa$-directed-closed, of size $\leq \lambda$, and preserves the regularity of $\lambda$.''
\item $\Vdash_{\E(\mu,\kappa) * \dot{\mathbb Q}}$ ``$\dot{\mathbb R}$ is $\kappa$-directed-closed, of size ${<}\delta$.''
\end{enumerate}
Then it is forced by $(\E(\mu,\kappa) * \dot{\mathbb Q}) * (\dot{\mathbb R} \times \dot{\E}(\lambda,\delta))$ that $\lambda^+ \chang_{Q^{<\omega}} \mu^+$.
\end{lemma}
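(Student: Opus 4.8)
The approach is to derive this from Lemma~\ref{lemma: easton collapse give cc} together with the Magidor--Malitz theory of $\mathcal L(Q^{<\omega})$; no new forcing is needed. First I would simply re-run the proof of Lemma~\ref{lemma: easton collapse give cc}: it already shows that $(\E(\mu,\kappa) * \dot{\mathbb Q}) * (\dot{\mathbb R} \times \dot{\E}(\lambda,\delta))$ forces $(\lambda^+,|\lambda|)\chang(\mu^+,\mu)$, where in the extension $\delta = \lambda^+$ and $\kappa = \mu^+$; discarding the clause about $\lambda$, every structure of cardinality $\lambda^+$ has a first-order elementary substructure of cardinality $\mu^+$. So it remains to upgrade ``elementary'' to ``$\mathcal L(Q^{<\omega})$-elementary'', which is a purely model-theoretic matter.

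For the upgrade I would use that the forcing is $\delta$-c.c.\ of size $\delta$ over a model of $\GCH$, so $2^\lambda = \delta = \lambda^+$ in $V[G*g*(h\times H)]$; since $\lambda$ is uncountable, the relevant $\diamondsuit$-principle $\diamondsuit(\lambda^+)$ holds there. By the Magidor--Malitz analysis (\cite{MagidorMalitz1977} for $\aleph_1$; the argument relativizes to $\lambda^+$), under $\diamondsuit(\lambda^+)$ every structure of cardinality $\lambda^+$ can be expanded -- by adjoining new function symbols -- to a structure $B$ carrying \emph{Magidor--Malitz Skolem functions}: there are first-order formulas $\theta_\varphi(\bar y)$, valid in every structure carrying these functions of any infinite cardinality, with $B \models Q^n\bar x\,\varphi(\bar x,\bar p) \leftrightarrow \theta_\varphi(\bar p)$, and the property is inherited by first-order elementary substructures. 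For such $B$, any first-order elementary $A \prec B$ is already $\mathcal L(Q^{<\omega})$-elementary, since $A \models Q^n\bar x\,\varphi(\bar x,\bar p)$ iff $A \models \theta_\varphi(\bar p)$ iff $B \models \theta_\varphi(\bar p)$ iff $B \models Q^n\bar x\,\varphi(\bar x,\bar p)$. Hence, given any structure $B_0$ of cardinality $\lambda^+$, I would expand it to such a $B$, pick $A \prec B$ with $|A| = \mu^+$ using $(\lambda^+,|\lambda|)\chang(\mu^+,\mu)$, note $A \prec_{Q^{<\omega}} B$, and take the reduct of $A$ to the language of $B_0$; this witnesses $\lambda^+ \chang_{Q^{<\omega}} \mu^+$. (Alternatively, to stay parallel with Lemma~\ref{lemma: easton collapse give cc}, one keeps the embedding $\tilde j : V[G*g*(h\times H)] \to M[\hat G * \hat g * (\hat h \times \hat H)]$: with $B$ as above, $\tilde j \image \delta$ is a first-order elementary, hence $\mathcal L(Q^{<\omega})$-elementary, substructure of $\tilde j(B)$ of size $\tilde j(\kappa)$, and one pulls this back using that ``$X$ has an $\mathcal L(Q^{<\omega})$-elementary substructure of size $y$'' is first-order in $X,y$.)

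The step I expect to need the most care is the reduction to a structure with Magidor--Malitz Skolem functions: pinning down exactly which $\diamondsuit$-type principle the Magidor--Malitz construction needs at $\lambda^+$ (rather than at $\aleph_1$), checking it follows from $2^\lambda = \lambda^+$ as computed in the extension, and verifying that the resulting uniform Skolem functions also correctly witness the $Q^n$-quantifiers in a substructure of the smaller cardinality $\mu^+$, so that the elementary substructure of size $\mu^+$ produced by Lemma~\ref{lemma: easton collapse give cc} is genuinely $\mathcal L(Q^{<\omega})$-elementary. All the forcing-theoretic work is inherited from Lemma~\ref{lemma: easton collapse give cc}.
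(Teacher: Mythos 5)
There is a genuine gap, and it sits exactly at the step you flag as needing ``the most care.'' Your reduction rests on the claim that, under $\diamondsuit(\lambda^+)$, one can expand an arbitrary structure $B$ of size $\lambda^+$ by ``Magidor--Malitz Skolem functions'' so that each $Q^n\bar x\,\varphi(\bar x,\bar p)$ becomes equivalent to a first-order $\theta_\varphi(\bar p)$ \emph{uniformly in every infinite first-order elementary substructure}. No such reduction is provided by \cite{MagidorMalitz1977}, and it cannot exist in this generality: the $\diamondsuit$-based result there is a completeness/compactness theorem for \emph{building} models of power $\aleph_1$ of consistent $\mathcal L(Q^{<\omega})$-theories, not a downward-absolute first-order definition of $Q^n$. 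Since the quantifier is interpreted by the cardinality of the ambient model, the problematic direction is: if $A\prec B$ with $|A|=\mu^+$ contains a $\varphi$-block of size $\mu^+$, why must $B$ contain one of size $\lambda^+$? The Skolem-function device (the paper's $f_\Phi$'s) only handles the converse, easy direction, where a one-to-one witnessing function in $B$ restricts to a witness in $A$. If your claimed equivalence $A\models Q^n\bar x\,\varphi \leftrightarrow A\models\theta_\varphi \leftrightarrow B\models\theta_\varphi \leftrightarrow B\models Q^n\bar x\,\varphi$ were available, then every instance of ordinary Chang's Conjecture would upgrade to the $Q^{<\omega}$-version under \GCH, which would make this lemma a trivial corollary of Lemma \ref{lemma: easton collapse give cc} and would also contradict the care the paper takes later: the authors explicitly note that, unlike ordinary Chang's Conjecture, Magidor--Malitz reflection has no analogue of the preservation Lemma \ref{preserve closed}, which is why Claim \ref{claim: mm reflection in successor of regulars} only asserts $\alpha^+\chang_{Q^{<\omega}}\beta^+$ at intermediate stages $\mathbb P_i$ rather than in the final model.

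The paper's actual proof supplies the missing mechanism with forcing, not model theory. It passes to a transitive $\mathcal A\prec H(\delta^+)$, extends $j$ to $\tilde j$ as in Lemma \ref{lemma: easton collapse give cc}, and proves by induction on formulas that $j\image\mathcal A\prec_{Q^{<\omega}} j(\mathcal A)$. For the hard direction, a $\varphi$-block of size $\delta$ inside $j\image\mathcal A$ lives in $M'\subseteq V'[K_0][K_1]$, where the quotient forcing factors as $\mathbb Q_0*\mathbb Q_1$ with $\mathbb Q_0$ of precaliber $\delta$ and $\mathbb Q_1$ $\delta$-closed; one first decides names along a decreasing $\delta$-sequence in $\mathbb Q_1$ to pull the block into $V'[K_0]$, then uses precaliber $\delta$ of $\mathbb Q_0$ to find $\delta$-many deciding conditions generating a filter, pulling the block into $V'$. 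Your parenthetical alternative (``first-order elementary, hence $\mathcal L(Q^{<\omega})$-elementary'') begs exactly this question. To repair your write-up you would need to replace the Skolemization step by this two-stage pull-back through the $\delta$-closed and precaliber-$\delta$ factors of the quotient forcing.
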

\begin{proof}
First, let us note that it is enough to deal with models $\mathcal{A}$ which are transitive elementary submodels of $H(\delta^{+})$. Indeed let $\mathcal{A}^\prime$ be an algebra on $\delta$. Clearly, $\mathcal{A}^\prime \in H(\delta^+)$. Let $\mathcal{A}$ be a transitive elementary submodel of $H(\delta^{+})$ of cardinality $\delta$ that contains $\mathcal{A}^\prime$ as an element. Assume that $\mathcal{B} \prec_{Q^n} \mathcal{A}$. Let us claim that $\mathcal{B}^\prime = \mathcal{B} \cap \mathcal{A}^\prime \prec_{Q^n} \mathcal{A}^\prime$. This is true, as any $Q^n$ statement in $\mathcal{A}^\prime$ is equivalent to a $Q^n$ statement in $\mathcal{A}$.

Let $\mathcal A$ be a transitive elementary structure of $H(\delta^+)$ of size $\delta$. We may assume that for every formula $\Phi$ of the form $Q^n x_0, \dots, x_{n-1} \varphi(x_0, \dots, x_{n-1}, p)$ there is function in the language of $\mathcal{A}$, $f_\Phi$ such that $f_\Phi\colon \mathcal{A}\to \mathcal{A}$ is either constant (if $\neg \Phi$) or one-to-one (if $\Phi$ holds), and if it is one-to-one, then \[\mathcal{A} \models \forall y_0, \dots, y_{n-1} \varphi(f_\Phi(y_0), \dots, f_\Phi(y_{n-1}), p).\]

Let $j : V[G * g * (h \times H)]  \to M[\hat G * \hat g * (\hat h \times \hat H)]$ be as in Lemma \ref{lemma: easton collapse give cc}, and for brevity denote the domain and codomain by $V'$ and $M'$ respectively.  We want to show that $j \image \mathcal A$ is a $Q^{{<}\omega}$-elementary substructure of $j(\mathcal A)$.

Let $\Phi$ be a formula of the form:
\[Q^n x_0, \dots, x_{n-1} \varphi(x_0, \dots, x_{n-1}, j(p)),\]
Let us assume, by induction, that every proper subformula of $\Phi$ is satisfied by $j(\mathcal{A})$ in $M'$ if and only if it is satisfied by $j\image \mathcal{A}$ in $M'$.

First, let us assume $M' \models ``j(\mathcal{A}) \models \Phi."$ By elementarity, \[V' \models `` \mathcal{A} \models Q^n x_0, \dots, x_{n-1} \varphi(x_0, \dots, x_{n-1}, p)."\] By the observation above, there is a function $f_\Phi$ witnessing this fact in $V'$ and clearly, $j(f_\Phi)$ is a one-to-one function on $j \image \mathcal{A}$ witnessing $j \image \mathcal{A} \models \Phi$.

On the other hand, assume that
\[M' \models `` j\image \mathcal{A} \models Q^n x_0, \dots, x_{n-1} \varphi(x_0, \dots, x_{n-1}, j(p))."\]
Let $I\subseteq j\image \mathcal{A}$ be a $\varphi$-block. We want to show that there is a corresponding $\varphi$-block (for the parameter $p$) also in $V'$.  Note that the forcing to obtain $\hat G * \hat g * (\hat h \times \hat H)$ from $G * g * (h \times H)$ is of the form $\mathbb{Q}_0\ast \mathbb{Q}_1$ where $\mathbb{Q}_0$ is a precaliber-$\delta$ forcing, and $\mathbb{Q}_1$ is a $\delta$-closed forcing. Let us denote the generic filter for $\mathbb{Q}_0$ by $K_0$ and the generic filter for $\mathbb{Q}_1$ by $K_1$.

In order to find the $\varphi$-block in $V'$, we will show that the existence of such $\varphi$-block in $V'[K_0][K_1]$ implies the existence of a corresponding $\varphi$-block in $V[K_0]$, and that the existence of the later $\varphi$-block in $V'[K_0]$ implies the existence of a similar $\varphi$-block in $V'$.

In $M'$, there is a $\varphi$-block $I \subseteq j\image \mathcal{A}$ of size $\delta$. Note that all its elements are of the form $j(a)$ for some $a\in V'$. Since $M' \subseteq V'[K_0][K_1]$, there is a $\varphi$-block in $V[K_0][K_1]$. In $V[K_0]$, let $\langle \dot{x}_i \colon i < \delta\rangle$ be a sequence of $\mathbb{Q}_1$-names such that $V[K_0][K_1]\models\text{``} \{ j(\dot{x}_i^{K_1}) \colon i < \delta\}$ is a $\varphi$-block''. In $V[K_0]$ let us construct a decreasing sequence of conditions $\langle q_i \colon i < \delta\rangle\subseteq \mathbb{Q}_1$ such that $q_0 \Vdash$``$ \{j(\dot{x}_i) \colon i < \delta\}$ is a $\varphi$-block in $j\image \mathcal{A}$'' and $q_i \Vdash \dot{x}_i = \check{a}_i$, for some $a_i \in V'$. We claim that $\{a_i \colon i < \delta\}$ is a $\varphi$-block for $\mathcal{A}$ in $V'$ (with parameter $p$). For every $\alpha_0 < \alpha_1 < \cdots < \alpha_{n-1} < \delta$, the condition $q = q_{\alpha_{n-1}+1}$ forces $j(a_{\alpha_0}), j(a_{\alpha_1}), \dots, j(a_{\alpha_{n-1}})\in I$. Thus, $q \Vdash j\image \mathcal{A} \models \varphi(j(a_{\alpha_0}), \dots, j(a_{\alpha_{n-1}}), j(p))$. This is a proper subformula of $\Phi$ and thus, by the induction hypothesis, \[q \Vdash M ' \models j(\mathcal{A}) \models \varphi(j(a_{\alpha_0}), \dots, j(a_{\alpha_{n-1}}), j(p)).\] By elementarity, \[q \Vdash V' \models \mathcal{A} \models  \varphi(a_{\alpha_0}, \dots,a_{\alpha_{n-1}}, p).\]
This statement is about the ground model $V'$, so it does not depend on the condition $q$.
We conclude that in $V[K_0]$ there is a $\varphi$-block, $I'\subseteq \mathcal{A}$ of cardinality $\delta$.

Work in $V'$. Let $\langle \dot{y}_i \colon i < \delta\rangle$ be a $\mathbb{Q}_0$-name such that $\langle a_i \colon i < \delta\rangle$ is its $K_0$ realization. In $V'$, let us pick conditions $\langle r_i \colon i < \delta\rangle \subseteq \mathbb{Q}_0$ such that $r_i \Vdash \dot{y}_i = \check{b}_i$ for some $b_i\in V'$, and each $r_i$ forces that $\{ \dot{y}_i \colon i < \delta\}$ is a $\varphi$-block.

By the $\delta$-precaliber of $\mathbb{Q}_0$, there is $X \in [\delta]^\delta$ such that $\{ r_\alpha : \alpha \in X \}$ generates a filter.  The set $I'' = \{ b_\alpha : \alpha \in X \}$ is a $\varphi$-block for $\mathcal A$ in $V'$:
indeed, if $\alpha_0 < \alpha_1 < \cdots < \alpha_{n-1} \in X$, then there is a condition $r\in \mathbb{Q}_0$ stronger than all the conditions $r_{\alpha_1}, \dots, r_{\alpha_{n-1}}$.
$r \Vdash V' \models \mathcal{A} \models \varphi(b_{\alpha_1}, \dots, b_{\alpha_{n-1}}, p)$.
But this is a statement about the ground model, so it does not depend on the condition $r$. We conclude that:
\[V' \models ``\mathcal A \models  Q^n  x_0, \dots, x_{n-1} \varphi(x_0, \dots, x_{n-1}, p),"\]
so by elementarity, $M' \models ``j(\mathcal{A}) \models \Phi$.''
\end{proof}

Applying the reflection argument of Lemma \ref{below huge}, we conclude that the measure $\mathcal{U}$ generated from the huge embedding contains a set $A$ such that every pair of elements $\alpha < \beta$ in $A$ satisfies the conclusion of Lemma \ref{below huge}, when replacing the Chang's relation $\chang$ with the stronger relation $\chang_{Q^{{<}\omega}}$.

Let us look at the model after the iteration of the Easton collapses. We can't conclude that the stronger version of Claim \ref{cc between all successors of regulars} holds, since we do not have a preservation lemma similar to Lemma \ref{preserve closed} for Magidor-Malitz reflection. Thus we can only conclude the following version:

\begin{claim}\label{claim: mm reflection in successor of regulars}
Let $\mathbb{P}_\kappa$ be the iteration defined in \ref{cc between all successors of regulars}.
If $\alpha > \beta$ are regular in the generic extension by $\mathbb{P}_\kappa$ then there is an $\alpha^+$-c.c.\ complete subforcing $\mathbb{P}_i$ such that $\Vdash_{\mathbb{P}_i} \alpha^{+}\chang_{Q^{{<}\omega}} \beta^+$ and $\mathbb{P}_\kappa / \mathbb{P}_i$ is $\alpha^{+}$-closed. The same holds when replacing $\mathbb{P}_\kappa$ by $j(\mathbb{P}_\kappa)$.
\end{claim}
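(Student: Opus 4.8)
The plan is to mimic the proof of Claim \ref{cc between all successors of regulars}, but this time tracking the Magidor--Malitz relation $\chang_{Q^{<\omega}}$ rather than the plain Chang relation $\chang$, and being careful that we cannot invoke a preservation lemma through the $\alpha^+$-closed tail. So I would argue as follows. Given regular cardinals $\alpha > \beta$ in $V^{\mathbb P_\kappa}$, let $i < j < \kappa$ be least such that $\beta$ is the first regular cardinal $\geq |\mathbb P_i|$ (equivalently $\beta = \alpha_i$ or $\beta = \alpha_i^+$ depending on whether $\alpha_i$ is regular or singular) and $\alpha$ is the first regular cardinal $\geq |\mathbb P_j|$. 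Set $\mathbb{P}_i$ to be the desired complete subforcing of $\mathbb{P}_\kappa$; it is $\alpha_i^+$-c.c., hence $\beta^+$-c.c., hence in particular $\alpha^+$-c.c., and the tail $\mathbb P_\kappa/\mathbb P_{j+1}$ is $\alpha_{j+1}$-closed, hence $\alpha^+$-closed. Actually, to be safe I would take the complete subforcing to be $\mathbb P_{j+1}$ rather than $\mathbb P_i$: it is $\alpha_{j+1}$-c.c.\ (so $\alpha^+$-c.c.) and the tail $\mathbb P_\kappa/\mathbb P_{j+1}$ is $\alpha_{j+1}$-closed (so $\alpha^+$-closed); within $V^{\mathbb P_{j+1}}$ we must still verify $\alpha^+ \chang_{Q^{<\omega}} \beta^+$. (The statement of the claim only asserts the existence of \emph{some} such subforcing $\mathbb P_i$, so either choice is fine as long as the two closure/chain conditions hold and the reflection holds in the intermediate model.)

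To see $\alpha^+ \chang_{Q^{<\omega}} \beta^+$ in $V^{\mathbb P_{j+1}}$, I would exploit that, up to isomorphism, $\mathbb P_{j+1}$ factors as $\mathbb P_i \ast (\mathbb P_{j+1}/\mathbb P_i)$, and that by construction $\mathbb P_{j+1}/\mathbb P_i$ has exactly the form $(\E(\alpha_i,\alpha_{i+1}) * \dot{\mathbb Q}) * (\dot{\mathbb R} \times \dot{\E}(\gamma,\alpha_{j+1}))$ appearing in the hypotheses of Lemma \ref{below huge} — here $\gamma = \alpha_j$ or $\alpha_j^+$, and $\beta$, $\alpha_{j+1}$ lie in the reflecting set $A$, with each element of $A$ sufficiently supercompact so that $\dot{\mathbb Q}$ is $\beta$-directed-closed of size $\leq\gamma$ preserving the regularity of $\gamma$ (exactly as in the verification that Lemma \ref{below huge} applies at singular limits in the discussion preceding Claim \ref{cc between all successors of regulars}). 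Then I invoke the \emph{strengthened} version of Lemma \ref{lemma: easton collapse give cc}, i.e.\ the immediately preceding lemma that replaces $(\lambda^+,|\lambda|)\chang(\mu^+,\mu)$ by $\lambda^+ \chang_{Q^{<\omega}} \mu^+$, together with the reflection argument of Lemma \ref{below huge}: this yields that the measure-one set $A$ can be chosen so that for every pair $\alpha' < \beta'$ from $A$, forcing with any poset of that form over $V^{\mathbb P_i}$ forces $\gamma^+ \chang_{Q^{<\omega}} \alpha'^+$. Applying this with $\alpha' = \beta$, $\beta' = \alpha_{j+1}$ gives $\alpha^+ \chang_{Q^{<\omega}} \beta^+$ in $V^{\mathbb P_i \ast (\mathbb P_{j+1}/\mathbb P_i)} = V^{\mathbb P_{j+1}}$, as required. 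This takes care of the first sentence of the claim.

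For the final sentence — the same holds with $\mathbb P_\kappa$ replaced by $j(\mathbb P_\kappa)$ — I would observe that $j(\mathbb P_\kappa)$ is defined in $M$ by the very same recipe (Easton-support iteration of Easton collapses between successive points of the closure of $j(A)$, with the modification at singular points), so the identical factorization through an appropriate initial segment $j(\mathbb P_\kappa)\restriction(j\text{-analogue of }j+1)$ is available, and the tail is again appropriately closed; since $M \cap V_\delta = V_\delta$ and the relevant instances of the strengthened Lemma \ref{lemma: easton collapse give cc} / Lemma \ref{below huge} are absolute between $V$ and $M$ for the cardinals below $\delta$ that concern us, the same argument goes through verbatim in $M$. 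The main obstacle — and the reason the claim is phrased in this weaker ``there is a complete subforcing'' form rather than as a clean analogue of Claim \ref{cc between all successors of regulars} — is precisely that we have \emph{no} preservation lemma for $\chang_{Q^{<\omega}}$ under the $\alpha^+$-closed tail $\mathbb P_\kappa/\mathbb P_{j+1}$ (Lemma \ref{preserve closed} is stated only for ordinary Chang's Conjecture, and Magidor--Malitz reflection is genuinely fragile under further forcing), so one must be content with asserting the reflection in the intermediate model $V^{\mathbb P_{j+1}}$ and separately recording the chain-condition and closure facts that will let later arguments use it; verifying that $\mathbb P_{j+1}/\mathbb P_i$ really does have the required syntactic shape at \emph{singular} limit stages (so that Lemma \ref{below huge} applies) is the only other point that needs a little care, and it is handled exactly as in the paragraph preceding Claim \ref{cc between all successors of regulars}.
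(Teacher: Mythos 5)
Your proposal is correct and follows exactly the route the paper intends: the paper in fact states this claim without a separate proof, deriving it from the immediately preceding discussion (the Magidor--Malitz strengthening of Lemma \ref{lemma: easton collapse give cc}, reflected as in Lemma \ref{below huge}, applied to the factorization $\mathbb P_{j+1} \cong \mathbb P_i * (\mathbb P_{j+1}/\mathbb P_i)$ with the $\alpha^+$-closed tail left untouched), which is precisely the argument you spell out, including the correct diagnosis of why the claim is only asserted in the intermediate model. Your treatment of the $j(\mathbb P_\kappa)$ case via $M \cap V_\delta = V_\delta$ likewise matches the paper's handling in Claim \ref{claim:preserving almost hugeness}.
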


\subsection{Radin forcing}
Work in the model of Claim~\ref{claim:preserving almost hugeness}. In this model, $\GCH$ holds high above $\kappa$, and  $\kappa$ is almost-huge. In particular, $\kappa$ is measurable and $o(\kappa) = \kappa^{++}$ so we can force with the Radin forcing, while collapsing the cardinals between points in the Radin club. We will show that in the generic extension, for every $\mu < \nu < \kappa$, where $\nu$ is a successor, $(\nu^+,\nu)\chang (\mu^+,\mu)$ holds.

We start with a pair of preservation lemmas:
\begin{lemma}\label{lemma: small forcing force cc after q2}
Let $\alpha < \beta$ be regular cardinals such that $\beta^{<\beta} = \beta$, and assume that $\beta^+\chang_{Q^2} \alpha^+$. Assume that $\mathbb{Q}$ is a $\beta$-c.c.\ forcing notion of size $\leq \beta$, and $\mathbb{Q}$ preserves $\alpha^+$. Then $\mathbb{Q}$ forces $(\beta^+, \beta)\chang (\alpha^+, |\alpha|)$.
\end{lemma}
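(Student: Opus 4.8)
The plan is to use the hypothesis $\beta^+ \chang_{Q^2} \alpha^+$ to produce, in $V$, a rich elementary submodel and then argue that it remains a suitable Chang witness after forcing with $\mathbb Q$. Suppose $\dot f$ is a $\mathbb Q$-name for a function from $(\beta^+)^{{<}\omega}$ to $\beta^+$. Since $\mathbb Q$ has size $\leq \beta$ and is $\beta$-c.c., for each $x \in (\beta^+)^{{<}\omega}$ the set of possible values of $\dot f(x)$ has size ${<}\beta$; code all this data, together with $\mathbb Q$ and a name for $\dot f$, into a structure $\mathcal A$ on $\beta^+$ in a language of size $\leq\beta$, including Skolem functions and a predicate for $\alpha^+$. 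The key point is to also include a function $g\colon (\beta^+)^2 \to \beta^+$ whose fibers encode, for each pair $(x,\gamma)$ with $\gamma$ a possible value of $\dot f(x)$, a maximal antichain of conditions deciding ``$\dot f(x) = \gamma$'' (using $\beta$-c.c.\ and $|\mathbb Q|\le\beta$, such an antichain has size ${<}\beta$, so it can be listed by a function of one further ordinal variable $\leq\beta < \beta^+$). By $\beta^+ \chang_{Q^2} \alpha^+$, there is $\mathcal B \prec_{Q^2} \mathcal A$ with $|\mathcal B| = \alpha^+$ and $|\mathcal B \cap \alpha^+| = |\alpha|$ (after first forcing the predicate for $\alpha^+$ to be ``small'' in $\mathcal B$ by a routine argument — $\mathcal B \cap \alpha^+$ is an initial segment, and being a proper substructure it must have size $|\alpha|$; here one uses that a submodel of size $\alpha^+$ cannot contain all of $\alpha^+$ if it is elementary in enough set theory, exactly as in the remark after the first lemma of the paper).

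Let $X = \mathcal B \cap \beta^+$. In $V$, $X$ is closed under the Skolem functions of $\mathcal A$, so in particular it is closed under $g$. The heart of the argument is to show that $X$ remains closed under $\dot f$ in $V[\mathbb Q]$. Fix a generic $G \subseteq \mathbb Q$ and $x \in X^{{<}\omega}$; I want $\dot f^G(x) \in X$. Let $\gamma = \dot f^G(x)$; then $\gamma$ is one of the ${<}\beta$ possible values, say the $\xi$-th, and there is a condition $q \in G$ in the corresponding maximal antichain $A_{x,\xi} = \{q^{x,\xi}_\eta : \eta < \theta_{x,\xi}\}$ forcing $\dot f(x) = \gamma$, i.e.\ $q = q^{x,\xi}_\eta$ for some $\eta$. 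All of $x$, $\xi$, $\eta$, and the enumeration of $A_{x,\xi}$ lie in $\mathcal B$ — $x \in X$, $\xi < \beta \subseteq \mathcal B$ (as $\mathcal B$ is elementary and $\beta$ is definable from parameters below), $\eta < \theta_{x,\xi} < \beta \subseteq \mathcal B$ — so the value $q^{x,\xi}_\eta$, and hence the ordinal $\gamma$ it forces $\dot f(x)$ to equal, is computable by a Skolem function of $\mathcal A$ applied to elements of $\mathcal B$; therefore $\gamma \in \mathcal B \cap \beta^+ = X$. Where exactly does $Q^2$ (as opposed to plain $\chang$) enter? It is needed so that the block structure of the antichains is preserved: I want $\mathcal B$ to contain, for cofinally many relevant configurations, an actual $\varphi$-block of conditions, guaranteeing that whenever $G$ meets $A_{x,\xi}$ it does so at a condition listed inside $\mathcal B$ — the single-quantifier version would only give that individual conditions are captured, not that a $\mathbb Q$-generic filter threads through the part of the antichain lying in $\mathcal B$. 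Concretely, because $|\mathbb Q| \leq \beta$, one encodes $\mathbb Q$ itself on an ordinal $\leq \beta$ and uses a $Q^2$ statement asserting the existence of a large set of conditions that is ``generic-like'' for the relevant dense sets; the block returned inside $\mathcal B$ then meets $G$.

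The main obstacle I expect is precisely this last point: arranging the coding so that $\mathcal B \prec_{Q^2} \mathcal A$ delivers a subset of $\mathbb Q$ inside $\mathcal B$ that is guaranteed to be met by any $\mathbb Q$-generic $G$, while keeping the language of $\mathcal A$ of size $\leq \beta$ and keeping $\mathcal B \cap \alpha^+$ of size $|\alpha|$. One natural route: since $|\mathbb Q| \le \beta$ and $\beta^{<\beta}=\beta$, there are only $\beta$ many dense subsets of $\mathbb Q$ that matter for deciding $\dot f$ on a given finite tuple, so one can fix an enumeration and phrase, via $Q^2$, that there is a set $D$ of size $|\mathcal A|$ of conditions such that every two elements of $D$ are compatible and $D$ is predense below each listed dense set — i.e.\ $D$ generates a filter meeting all of them; its reflection into $\mathcal B$ still generates such a filter below the relevant level, and any actual generic refines a compatible condition of it. Once the closure of $X$ under $\dot f$ is established, $X$ has size $\alpha^+$ in $V[\mathbb Q]$ (as $\mathbb Q$ preserves $\alpha^+$ and is $\beta$-c.c., so preserves $\beta^+$ too), and $|X \cap \alpha^+| = |\alpha|$, giving $(\beta^+,\beta)\chang(\alpha^+,|\alpha|)$ in $V[\mathbb Q]$, as desired. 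The case bookkeeping (whether $\mathbb Q$ collapses $\alpha$, whether $\alpha^+ = \beta^+$ becomes possible) is handled exactly as in the remark following Claim~\ref{cc between all successors of regulars}.
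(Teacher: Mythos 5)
Your approach has a genuine gap at exactly the point you flag as the main obstacle, and I do not think it can be repaired along those lines. You want a ground-model set $X=\mathcal B\cap\beta^+$ of size $\alpha^+$ that remains closed under $\dot f$ after forcing. For $x\in X^{{<}\omega}$ the antichain deciding $\dot f(x)$ has size ${<}\beta$, which can exceed $\alpha^+$ (e.g.\ when $\beta=\alpha^{+5}$), so $\mathcal B$ cannot contain the whole antichain nor all possible values, and a generic filter is under no obligation to meet that antichain at a condition lying in $\mathcal B$: genericity is about meeting dense sets, not about threading through a prescribed small compatible family. Your proposed fix --- a $Q^2$-sentence asserting a set $D$ of conditions of size $|\mathcal A|=\beta^+$ that is pairwise compatible and predense below the relevant dense sets --- is vacuous, since $|\mathbb Q|\le\beta$ means there is no subset of $\mathbb Q$ of size $\beta^+$ at all, so the $Q^2$ statement (which by definition demands $|I|=|M|$) is false in $\mathcal A$ and reflects nothing. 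Even granting a reformulation, ``any actual generic refines a compatible condition of it'' is not true: the reflected family in $\mathcal B$ generates a filter, but a generic $G$ need not be compatible with any prescribed ground-model filter. Relatedly, the step ``$\xi<\beta\subseteq\mathcal B$'' is wrong, since $|\mathcal B|=\alpha^+<\beta$, so the indices $\xi$ and $\eta$ you feed to the Skolem functions need not lie in $\mathcal B$.

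The paper's proof goes the opposite way: it is a proof by contradiction, and the Chang witness lives in $V[G]$, not in $V$. One assumes $\dot f$ is forced to be a counterexample, takes $\mathcal B\prec_{Q^2}\mathcal A$, and uses preservation of $\alpha^+$ together with a maximal-set argument to find $I\subseteq\mathcal B$ of size $\alpha^+$ such that every pair in $I$ is separated by some condition. The role of $Q^2$ is the \emph{upward} transfer of the sentence ``$Q^2\,a,b\ \exists q\ q\Vdash\dot f(a)\ne\dot f(b)$'' from $\mathcal B$ to $\mathcal A$, producing such a set of size $\beta^+$. This yields a coloring of $[\beta^+]^2$ by $2\times\mathbb Q$, hence by at most $\beta$ colors since $\beta^{<\beta}=\beta$; Remark~\ref{remark: erdos-rado for 2k colors} and the $\beta$-c.c.\ then produce a single condition forcing an increasing sequence of order type $\beta+1$ of ordinals below $\beta$, a contradiction. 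If you want a direct ``close off a ground-model set'' argument, you need the number of possible values of $\dot f(x)$ to be at most $\alpha^+$, i.e.\ $\alpha^{++}$-c.c.; that is Lemma~\ref{preserve small}, not this lemma.
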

\begin{proof}
Let $\dot{f}$ be a name of a function from $(\beta^+)^{<\omega}$ to $\beta$, such that it is forced that there is no set $A$ of cardinality $\alpha^+$ such that $|\dot{f}\image A^{<\omega}| \leq \alpha$.

Let $\mathcal{A}$ be an elementary substructure of $H(\chi)$ of cardinality $\beta^+$, $\chi$ large enough, $\dot{f}, \mathbb{Q}\in \mathcal{A}$ and let $\mathcal{B} \prec_{Q^2} \mathcal{A}$, $|\mathcal{B}| = \alpha^+$, and $\dot{f}, \mathbb{Q}\in \mathcal{B}$. Let us look at the elements $\dot{f}(a)$, $a\in (\mathcal{B}\cap \beta^+)^{<\omega} = \mathcal{B} \cap (\beta^{+})^{{<}\omega}$. It is forced that there are $\alpha^+$ many elements $a\in \mathcal{B}$ with different realizations for $\dot{f}(a)$.

Thus, in the generic extension, there is a set $\dot{I}\subseteq \mathcal{B}$ such that every pair of elements of it obtain different values under $\dot{f}$. We claim that in the ground model, one can find a set of full cardinality $I \subseteq \mathcal{B}$ such that for every $a, b\in I$ there is a condition $q\in \mathbb{Q}$ that forces $f(a) \neq f(b)$.

Let us look at the collection of all subsets of $I^\prime \subseteq \mathcal{B} \cap (\beta^{+})^{{<}\omega}$ such that for every pair of distinct elements $a, b\in I^\prime$ there is a condition $q\in \mathbb{Q}$, such that $q\Vdash \dot{f}(\check a) \neq \dot{f}(\check b)$. Let $I_m$ be maximal with respect to this condition. If $|I_m| \leq \alpha$, then in the generic extension for every $a\in \mathcal{B}\cap (\beta^{+})^{{<}\omega}$, $\dot{f}(a) \in \dot{f}\image I_m$. If $a\in I_m$, this is clear, and otherwise, there is no condition $q$ that forces it to be different from every element in this set, so every condition forces it to be equal to one of them. In particular, in the generic extension $\alpha^{+} = |\dot{f} \image (\mathcal{B} \cap (\beta^{+})^{{<}\omega})| \leq |I_m| \leq |\alpha|$ - a contradiction to the assumption that $\alpha^{+}$ is not collapsed in the generic extension.

Let $I\subseteq \mathcal{B} \cap (\beta^{+})^{{<}\omega}$ be a set of cardinality $\alpha^{+}$ such that for every $a, b\in I$, there is $q\in \mathbb{Q}$ that forces $\dot{f}(a) \neq \dot{f}(b)$.
By elementarity, for every $a, b\in I$, there is $q\in \mathbb{Q} \cap \mathcal{B}$ forcing the same statement. Therefore $\mathcal{B}$ satisfies the following $Q^2$-sentence:
\[Q^2 a, b\in \mathcal{B}\, \exists q\in \mathbb{Q},\, q\Vdash \dot f(a) \neq \dot f(b)\]
Thus, $\mathcal{A}$ satisfies the same formula: There is a set $I\subseteq \mathcal{A}$, $|I| = \beta^+$, and for every $a, b\in I$, there is $q\in \mathbb{Q}$ such that $q \Vdash \dot{f}(a) < \dot{f}(b)$ or $q \Vdash \dot{f}(a) > \dot{f}(b)$. This defines a coloring $h\colon [\beta^+]^2 \to 2 \times \mathbb{Q}$.

By Remark \ref{remark: erdos-rado for 2k colors}, there are sequences $\langle a_i : i < \beta + 1 \rangle \subseteq (\beta^+)^{<\omega}$ and $\langle q_i : i < \beta \rangle \subseteq \mathbb Q$ such that for all $i < \beta$:
\[ (\forall j >i)\, q_i \Vdash \dot f(a_i) < \dot f(a_j) < \beta, \text{ or }
(\forall j >i) q_i \Vdash \beta > \dot f(a_i) > \dot f(a_j). \]
For each $i$, the second option is impossible by well-foundedness.
By the $\beta$-c.c., there is some $q$ forcing $\beta$-many of $q_i$ to be in the generic filter.  Then $q$ forces that there is an increasing sequence of order type $\beta +1$ below $\beta$, which is impossible.
\end{proof}

\begin{corollary}\label{cor: small forcing force cc after half q2}
Work in the generic extension by $\mathbb{P}_\kappa$. Let $\alpha < \gamma_0 \leq \gamma_1 \leq \beta < \kappa$ be regular cardinals. Assume that
$\mathbb{Q}$ is a $\beta$-c.c.\ forcing notion of size $\leq \beta$, and $\mathbb{Q} \times \Col(\gamma_0,\gamma_1)$ preserves $\alpha^+$.
Then $\mathbb{Q} \times \Col(\gamma_0,\gamma_1)$ forces $(\beta^+, \beta)\chang (\alpha^+, |\alpha|)$.
\end{corollary}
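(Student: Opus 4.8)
The plan is to bring this instance of Chang's Conjecture down to an initial segment of $\mathbb{P}_\kappa$ on which the Magidor--Malitz reflection supplied by Claim~\ref{claim: mm reflection in successor of regulars} is still available, apply Lemma~\ref{lemma: small forcing force cc after q2} there, and then lift the conclusion back over the $\beta^+$-closed remainder of $\mathbb{P}_\kappa$ by Lemma~\ref{preserve closed}.

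Working in $V[G_\kappa]$, I would first invoke Claim~\ref{claim: mm reflection in successor of regulars} for the pair $\alpha<\beta$ (applied with its ``$\alpha$'' taken to be our $\beta$ and its ``$\beta$'' our $\alpha$, since here $\beta$ is the larger cardinal) to fix a $\beta^+$-c.c.\ complete subforcing $\mathbb{P}_i\lhd\mathbb{P}_\kappa$ with $\Vdash_{\mathbb{P}_i}\beta^+\chang_{Q^{{<}\omega}}\alpha^+$ and with $\mathbb{P}_\kappa/\mathbb{P}_i$ forced $\beta^+$-closed. Put $G_i=G_\kappa\cap\mathbb{P}_i$ and $\mathbb{S}=(\mathbb{P}_\kappa/\mathbb{P}_i)^{V[G_i]}$, so that $\mathbb{S}$ is a $\beta^+$-closed poset of $V[G_i]$ and $V[G_\kappa]=V[G_i][G_{\mathbb{S}}]$. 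The key observation is that $\mathbb{Q}':=\mathbb{Q}\times\Col(\gamma_0,\gamma_1)$ already lies in $V[G_i]$: the collapse $\Col(\gamma_0,\gamma_1)$ is defined outright from the ordinals $\gamma_0,\gamma_1<\kappa$, and, since $\mathbb{Q}$ has size $\le\beta$, we may take its underlying set to be an ordinal $\le\beta$, so $\mathbb{Q}$ is coded by a subset of $\beta$ — but $\mathbb{S}$, being $\beta^+$-closed over $V[G_i]$, adds no new subsets of $\beta$, whence $\mathbb{Q}\in V[G_i]$. As antichains pass to inner models and the tail adds no subsets of $\beta$, $\mathbb{Q}$ retains its $\beta$-c.c.\ and size $\le\beta$ in $V[G_i]$, and therefore $\mathbb{Q}'$ is $\beta$-c.c.\ of size $\le\beta$ there provided $\gamma_1<\beta$ — the situation in the applications; in the remaining boundary case $\gamma_1=\beta$ one notes that $\Col(\gamma_0,\beta)$ collapses $\beta$ to $\gamma_0$ and reruns the argument below for the pair $(\gamma_0^+,\gamma_0)$.

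Next I would move to $V[G_i]$ and apply Lemma~\ref{lemma: small forcing force cc after q2} with the regular cardinals $\alpha<\beta$ and the forcing $\mathbb{Q}'$: its hypotheses hold, since $\beta^+\chang_{Q^2}\alpha^+$ (a weakening of $\beta^+\chang_{Q^{{<}\omega}}\alpha^+$) and $\beta^{<\beta}=\beta$ hold in $V[G_i]$ — the iteration of Easton collapses keeps GCH at the surviving regular cardinals — and $\mathbb{Q}'$ preserves $\alpha^+$ over $V[G_i]$ because it does so over the larger model $V[G_\kappa]\supseteq V[G_i]$ and $\mathbb{Q}'\in V[G_i]$. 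This gives $\Vdash^{V[G_i]}_{\mathbb{Q}'}(\beta^+,\beta)\chang(\alpha^+,|\alpha|)$. Then I would apply Lemma~\ref{preserve closed} in $V[G_i]$ with $\kappa_1=\beta^+$, taking the lemma's $\mathbb{P}$ to be the $\beta^+$-c.c.\ poset $\mathbb{Q}'$, its $\mathbb{Q}$ the $\beta^+$-closed poset $\mathbb{S}$, and $\dot{\mathbb{R}}$ the canonical name for $\mathbb{S}$ (legitimate since $\mathbb{S}\in V[G_i]$, so $\check{\mathbb{S}}\lhd\check{\mathbb{S}}$ trivially), obtaining $\Vdash^{V[G_i]}_{\mathbb{Q}'\times\mathbb{S}}(\beta^+,\beta)\chang(\alpha^+,|\alpha|)$. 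Finally, by Easton's lemma the $\beta^+$-c.c.\ generic for $\mathbb{Q}'$ and the $\beta^+$-closed generic for $\mathbb{S}$ commute, so $V[G_i][G_{\mathbb{S}}][G_{\mathbb{Q}'}]=V[G_i][G_{\mathbb{Q}'}][G_{\mathbb{S}}]$; since $V[G_i][G_{\mathbb{S}}]=V[G_\kappa]$ and the cardinals $\beta,\beta^+,\alpha,\alpha^+$ are computed alike in $V[G_i]$ and $V[G_\kappa]$ (the tail being $\beta^+$-closed), this is exactly the assertion that $\mathbb{Q}\times\Col(\gamma_0,\gamma_1)$ forces $(\beta^+,\beta)\chang(\alpha^+,|\alpha|)$ over $V[G_\kappa]$.

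I expect the main obstacle to be the model bookkeeping rather than any delicate combinatorics: one must verify carefully that $\mathbb{Q}\times\Col(\gamma_0,\gamma_1)$ genuinely already belongs to $V[G_i]$ — this is what makes the surviving Magidor--Malitz reflection usable via Lemma~\ref{lemma: small forcing force cc after q2} — and that its size, chain condition and preservation of $\alpha^+$ are unaffected by the descent to $V[G_i]$, and that the successor cardinals in the conclusion agree across $V[G_i]$, $V[G_\kappa]$ and the further $\mathbb{Q}'$-extension. The small boundary case $\gamma_1=\beta$, where $\Col(\gamma_0,\beta)$ itself collapses $\beta$, also needs the separate treatment indicated above.
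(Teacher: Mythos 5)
Your main line of argument --- pull back to $\mathbb{P}_i$, apply Lemma~\ref{lemma: small forcing force cc after q2}, lift over the $\beta^+$-closed tail via Lemma~\ref{preserve closed} --- is the right skeleton, and for $\gamma_1<\beta$ it works essentially as in the paper (the bookkeeping about $\mathbb{Q}'\in V[G_i]$ and preservation of $\alpha^+$ downward is fine). The genuine gap is the boundary case $\gamma_1=\beta$, which you relegate to a one-line workaround but which is precisely the case the corollary is used for later: in the Radin argument the corollary is applied with $\mathbb{Q}=\mathbb{P}_0$ and the collapse $\Col(\zeta^+,\xi)$ where $\xi$ plays the role of $\beta$, so $\mathbb{Q}\times\Col(\gamma_0,\gamma_1)$ collapses $\beta$ and is only $\beta^+$-c.c., and Lemma~\ref{lemma: small forcing force cc after q2} cannot be applied to the product. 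Your proposed fix --- ``rerun the argument for the pair $(\gamma_0^+,\gamma_0)$'' --- does not go through as stated: the relevant instance of Magidor--Malitz reflection would have to be $\beta^+\chang_{Q^{2}}\alpha^+$ (equivalently $\gamma_0^+\chang_{Q^{2}}\alpha^+$ \emph{after} the collapse, where $\gamma_0^+$ has become the old $\beta^+$) holding in the extension of $V[G_i]$ by $\Col(\gamma_0,\beta)$. Claim~\ref{claim: mm reflection in successor of regulars} applied to the pair $(\gamma_0,\alpha)$ in $V[G_\kappa]$ only gives reflection for the $V[G_\kappa]$-cardinal $\gamma_0^+<\beta^+$, which concerns algebras on the wrong ordinal; and the paper explicitly notes that there is no analogue of Lemma~\ref{preserve closed} for $\chang_{Q^{<\omega}}$, so you cannot simply transport $\beta^+\chang_{Q^{<\omega}}\alpha^+$ from $V[G_i]$ across the collapse.

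The paper avoids this by placing the collapse on the other side of the split: it uses the fact that the huge-cardinal Magidor--Malitz lemma tolerates an extra $\kappa$-directed-closed factor, so that $\beta^+\chang_{Q^{<\omega}}\alpha^+$ holds already in $V^{\mathbb{P}_i*\dot\Col(\gamma_0,\gamma_1)}$; Lemma~\ref{lemma: small forcing force cc after q2} is then applied over \emph{that} model to $\mathbb{Q}$ alone (which is genuinely small and has a good chain condition there), and Lemma~\ref{preserve closed} is applied to the pair $\mathbb{Q}\times\Col(\gamma_0,\gamma_1)$ versus the $\beta^+$-closed tail exactly as you do at the end. To repair your proof you would need to make this absorption of $\Col(\gamma_0,\gamma_1)$ into the reflection-producing initial segment explicit, rather than treating the collapse as part of the ``small'' forcing to which the $Q^2$-lemma is applied.
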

\begin{proof}
Let $\mathbb{P}_i$ be a $\beta^+$-c.c.\ regular subforcing of $\mathbb{P}_\kappa$ such that $\Vdash_{\mathbb{P}_i} \beta^+\chang_{Q^{{<}{\omega}}} \alpha^+$ and $\mathbb{P}_\kappa / \mathbb{P}_i$ is $\beta^+$-closed.
By Claim \ref{claim: mm reflection in successor of regulars}, $\beta^+ \chang_{Q^{<\omega}} \alpha^+$ holds in $V^{\mathbb P_i * \dot \Col(\gamma_0,\gamma_1)}$.
Lemma \ref{lemma: small forcing force cc after q2} implies that $\mathbb Q$ forces $(\beta^+, \beta)\chang (\alpha^+, |\alpha|)$ over this model.
In $V^{\mathbb P_i}$, $\mathbb{P}_\kappa / \mathbb{P}_i$ is $\beta^+$-closed, and $| \mathbb Q \times \Col(\gamma_0,\gamma_1) | \leq \beta$.  Therefore, Lemma \ref{preserve closed} implies that in the generic extension Chang's Conjecture, $(\beta^+, \beta)\chang (\alpha^+, |\alpha|)$, holds.
\end{proof}
We are now ready for the main theorem. We start by defining a notion of Radin forcing with interleaved collapses. For simplicity, we assume $\GCH$.

Recall the following definition of a measure sequence, due to Radin.
\begin{definition}\cite{Radin1982}
Let $j\colon V\to M$ be an elementary embedding, $\crit j = \alpha$. Let us define, by induction, a sequence of normal measures on $V_\alpha$, $u$. Let $u(0) = \alpha$.

For every $i < j(\alpha)$, if $u\restriction i \in M$, let $u(i) = \{X \subseteq V_\alpha \colon u\restriction i \in j(X)\}$. Otherwise, we halt.

$u$ is called the \emph{measure sequence} derived from $j$.
\end{definition}

Let $j\colon V\to M$ be an elementary embedding with critical point $\kappa$. Let $\mathcal{U}$ be the measure sequence derived from $j$. Let $MS$ be the class of all measure sequences.

We say that a measure $u$ on $V_\alpha$ is \emph{normal} if it is closed under diagonal intersections in the following sense: if $\langle A_v \mid v \in MS \cap V_\alpha\rangle$ is a list of sets from $u$ then also:
\[\triangle_{v} A_v = \{x \in V_\alpha \cap MS \mid \forall v\in V_{x(0)} \cap MS,\, x \in A_v\}\]

Let us start with the following fact:
\begin{lemma}\label{lemma: guiding generics}
There is a sequence $\langle \guidinggeneric_\alpha \colon \alpha \leq \kappa\rangle$ such that:
\begin{enumerate}
\item For every measurable $\alpha$, and every $f\in \guidinggeneric_\alpha$, $f$ is a function with domain $V_{\alpha}$ and for every measure sequence $u \in \dom f$, $f(u) \in \Col(u(0)^+, \alpha)$.
\item Let $u$ be a normal measure on $V_\alpha$, $\alpha \leq \kappa$, and let $j_u\colon V\to M_u$ be the ultrapower embedding. Then the set $\{[f]_u \colon f\in \guidinggeneric_\alpha\}$ is an $M_u$-generic filter for $\Col(\alpha^{+}, j_u(\alpha))$. Moreover, for every function $D\colon V_\alpha \to V$, such that for every measure sequence $v\in V_\alpha$, $D(v)$ is a dense open subset of $\Col(v(0)^+, \alpha)$, there is $f\in \guidinggeneric_\alpha$ such that $\{v \in MS \cap V_\alpha \colon f(v)\in D(v)\}$ belongs to every normal ultrafilter on $V_\alpha$.
\end{enumerate}
\end{lemma}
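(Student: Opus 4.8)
\textit{Plan.} For each measurable $\alpha \le \kappa$ I would build $\guidinggeneric_\alpha$ as a chain of ``collapse functions'' of length $\alpha^+$, descending in an almost--everywhere sense and diagonalizing against all dense sets; for non-measurable $\alpha$ put $\guidinggeneric_\alpha = \emptyset$, which makes (1) and (2) vacuous there. Fix a measurable $\alpha \le \kappa$ and call a function $E$ on $MS \cap V_\alpha$ a \emph{density function} if $E(v)$ is dense open in $\Col(v(0)^+,\alpha)$ for each $v$. A $\GCH$ computation using $|V_\alpha| = \alpha$ gives $|\Col(v(0)^+,\alpha)| = \alpha$, so there are at most $\alpha^+$ density functions; fix an enumeration $\langle E^\eta : \eta < \alpha^+ \rangle$ of them. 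The role of this enumeration is the following: if $u$ is a normal ultrafilter on $V_\alpha$ with ultrapower $j_u : V \to M_u$, then by normality $[v \mapsto v(0)]_u = \alpha$ and trivially $[v \mapsto \alpha]_u = j_u(\alpha)$, so by \L o\'s's theorem $\Col(\alpha^+,j_u(\alpha))^{M_u} = [v \mapsto \Col(v(0)^+,\alpha)]_u$, and every dense open $D \in M_u$ of that poset has the form $[E]_u$ for a density function $E$ (after correcting $E$ on a $u$-null set to make all values dense open).

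Next I would recursively build $\langle f_\xi : \xi < \alpha^+ \rangle$ with each $f_\xi$ defined on $V_\alpha$, $f_\xi(v) \in \Col(v(0)^+,\alpha)$ for $v \in MS \cap V_\alpha$ (the trivial condition elsewhere), preserving: (i) for $\zeta < \xi$, the set $\{ v : f_\xi(v) \le f_\zeta(v) \}$ belongs to every normal ultrafilter on $V_\alpha$; and (ii) $f_{\eta+1}(v) \in E^\eta(v)$ for all $v \in MS \cap V_\alpha$. At a successor $\eta+1$, let $f_{\eta+1}(v)$ be any extension of $f_\eta(v)$ inside the dense set $E^\eta(v)$, so that (ii) holds and $f_{\eta+1} \le f_\eta$ pointwise, giving (i) by transitivity with earlier stages. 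At a limit $\xi$ with $\cf \xi = \theta < \alpha$, fix a cofinal $\langle \xi_i : i < \theta \rangle$; the $\theta$ sets $\{ v : f_{\xi_{i'}}(v) \le f_{\xi_i}(v) \}$ lie in every normal ultrafilter, hence so does their intersection $B$ by $\alpha$-completeness, and on $B \cap \{ v : v(0) > \theta \}$ one takes $f_\xi(v)$ to be a lower bound of the descending sequence $\langle f_{\xi_i}(v) : i < \theta \rangle$ in $\Col(v(0)^+,\alpha)$, which exists as $\theta < v(0)^+$.

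The delicate case, and the main obstacle, is a limit $\xi$ with $\cf \xi = \alpha$, where intersecting $\alpha$ many measure-one sets need not yield a measure-one set. Here I would set, for a cofinal $\langle \xi_i : i < \alpha \rangle$, $C_{i'} = \bigcap_{i < i'} \{ v : f_{\xi_{i'}}(v) \le f_{\xi_i}(v) \}$, which is in every normal ultrafilter since $i' < \alpha$, and pass to the diagonal intersection $D = \{ x : \forall i < x(0),\ x \in C_i \}$, which lies in every normal ultrafilter by normality; on $D$ let $f_\xi(v)$ be a lower bound of $\langle f_{\xi_i}(v) : i < v(0) \rangle$, a descending sequence of length $v(0) < v(0)^+$. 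What makes this go through is that for a fixed $v$ one only ever has to amalgamate coherently inside the $v(0)^+$-closed poset $\Col(v(0)^+,\alpha)$ with $v(0) < \alpha$, so $\alpha$-completeness together with normality is exactly enough. In both limit cases invariant (i) is then checked for a fixed normal $u$ by noting $f_\xi(v) \le f_{\xi_i}(v)$ on the relevant measure-one set and combining the inductive fact $[f_{\xi_i}]_u \le [f_\zeta]_u$ for $\zeta \le \xi_i$ with the cofinality of the $\xi_i$ in $\xi$.

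Finally I would set $\guidinggeneric_\alpha = \{ f_\xi : \xi < \alpha^+ \}$. Clause (1) is immediate, and the ``moreover'' part of clause (2) holds because, given a density function $D = E^\eta$, the member $f_{\eta+1}$ satisfies $f_{\eta+1}(v) \in D(v)$ for every $v \in MS \cap V_\alpha$, so the set named there is all of $MS \cap V_\alpha$, which belongs to every normal ultrafilter. For the first part of (2), fix a normal ultrafilter $u$: since $[v \mapsto v(0)^+]_u = \alpha^+$, each $[f_\xi]_u$ is a condition in $\Col(\alpha^+,j_u(\alpha))^{M_u}$, and by (i) these form a descending chain, hence generate a filter; and for any dense open $D \in M_u$ of that poset, writing $D = [E^\eta]_u$ as above gives $[f_{\eta+1}]_u \in D$, so the generated filter is $M_u$-generic. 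Collecting $\guidinggeneric_\alpha$ over all $\alpha \le \kappa$ yields the required sequence.
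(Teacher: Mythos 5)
Your proposal is correct and follows essentially the same route as the paper: under \GCH{} there are only $\alpha^+$ ``density functions'', and one builds a chain of length $\alpha^+$ that diagonalizes against all of them, is descending modulo every normal measure, and is amalgamated at limit stages by a diagonal intersection followed by taking pointwise lower bounds in the $v(0)^+$-closed collapses (the paper keeps the slightly stronger invariant that $p_\beta(u)\le p_\alpha(u)$ for all $u$ with $u(0)$ in a club $C_{\alpha,\beta}$, so its limit step uses a diagonal intersection of clubs rather than of measure-one sets, but this is only bookkeeping). The one substantive divergence is at $\alpha=\kappa$: you run the construction directly there, which does prove the lemma as stated, whereas the paper sets $\guidinggeneric_\kappa = j(\langle \guidinggeneric_\alpha : \alpha<\kappa\rangle)(\kappa)$ --- a choice that is not needed here but is what makes the coherence argument of Lemma~\ref{lemma: guiding generics reflection}(1) go through later.
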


\begin{proof}
Let $\delta < \kappa$.  Using \GCH, enumerate all the functions $D : V_\delta \to V_{\delta+1}$, such that  $D(u)$ is a dense open subset of $\Col(u(0)^+, \delta)$ for every measure sequence $u\in V_\delta$, as $\langle D_\alpha : \alpha < \delta^+ \rangle$.  Let $p_0 : V_\delta \to V_\delta$ be such that $p_0(u) \in D_0(u)$ for every measure sequence $u \in V_\delta$.  Given $\langle p_i : i \leq \alpha \rangle$, $\alpha < \delta^+$, let $p_{\alpha+1}$ be such that $p_{\alpha+1}(u) \leq p_\alpha(u)$ and $p_{\alpha+1}(u) \in D_{\alpha+1}(u)$ for all measure sequences $u$.  At limit stages $\lambda$ in the construction, we use the following inductive assumption:  For every $\alpha < \beta < \lambda$, there is a club $C_{\alpha,\beta} \subseteq \delta$ such that whenever $u(0) \in C_{\alpha,\beta}$ for a measure sequence $u$, $p_\alpha(u) \geq p_\beta(u)$.  Let $\langle \lambda_\alpha : \alpha < \cf(\lambda) \rangle$ be increasing and cofinal in $\lambda$.  The diagonal intersection, $C = \{ \alpha < \delta :$ for all $\beta < \gamma < \alpha$, $\alpha \in C_{\lambda_\beta,\lambda_\gamma} \}$, is club.  For all $u$ such that $u(0) \in C$, $\langle p_{\lambda_\alpha}(u) : \alpha < u(0) \rangle$ is a decreasing sequence in $\Col(u(0)^+,\delta)$.   Let $p_\lambda$ be such that $p_\lambda(u) \in D_\lambda(u)$ is a lower bound to this sequence for all such $u$.  To continue the induction, we define $C_{\lambda_\alpha,\lambda} = C \cap \{ \beta \mid \alpha < \beta \}$ for $\alpha < \cf(\lambda)$.  For $\beta < \lambda$ not among the $\lambda_\alpha$, let $C_{\beta,\lambda} = C_{\lambda_\alpha,\lambda} \cap C_{\beta,\lambda_\alpha}$, where $\alpha$ is the least ordinal such that  $\lambda_\alpha > \beta$.

For every normal measure $u$ on $V_\delta$, $\{[p_\alpha]_u \colon \alpha < \delta^+ \}$ is a descending sequence in $\Col(\delta^+,j_u(\delta))$, and $[p_\alpha]_u \in [D_\alpha]_u$ for every $\alpha < \delta^+$.  We let $\guidinggeneric_\delta = \{ p_\alpha : \alpha < \delta^+ \}$.  Finally, we let $\guidinggeneric_\kappa = j(\langle \guidinggeneric_\alpha : \alpha < \kappa \rangle)(\kappa)$.
\end{proof}

\begin{lemma}\label{lemma: guiding generics reflection}
Under the same assumptions:
\begin{enumerate}
\item Assume that $\langle f_\alpha \colon \alpha < \kappa\rangle$ is a sequence of functions, $f_\alpha \in \guidinggeneric_\alpha$. Then there is a function $f \in \guidinggeneric_\kappa$ such that the collection: $\{v \in V_\kappa \colon f \restriction V_{v(0)} = f_{v(0)}\}$ is in $\bigcap_{0 < \beta < \len \mathcal{U}} \mathcal{U}(\beta)$.
\item Let $B$ be the set of all measure sequences $u \in V_\kappa$ such that for every $\langle f_\gamma \colon \gamma < u(0)\rangle$, with $f_\gamma \in \guidinggeneric_\gamma$ there is $f$ such that $\{v \in V_{u(0)} \colon f \restriction V_{v(0)} = f_{v(0)}\}$ is in $\bigcap_{0 < \beta < \len u} u(\beta)$. Then $B \in \bigcap_{0 < \beta < \len \mathcal{U}} \mathcal{U}(\beta)$.
\end{enumerate}
\end{lemma}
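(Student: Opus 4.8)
The plan is to establish both clauses by pushing the relevant statements through the embedding $j\colon V\to M$ from which $\mathcal{U}$ was derived, exploiting the fact that in Lemma~\ref{lemma: guiding generics} the set $\guidinggeneric_\kappa$ was \emph{defined} to be $j(\vec\guidinggeneric)(\kappa)$, where $\vec\guidinggeneric=\langle\guidinggeneric_\alpha\colon\alpha<\kappa\rangle$. For clause (1), given $\vec f=\langle f_\alpha\colon\alpha<\kappa\rangle$ with $f_\alpha\in\guidinggeneric_\alpha$, I would simply take $f:=j(\vec f)(\kappa)$. That $f\in\guidinggeneric_\kappa$ is immediate from elementarity: $V\models\forall\alpha<\kappa\,(\vec f(\alpha)\in\vec\guidinggeneric(\alpha))$ gives $j(\vec f)(\alpha)\in j(\vec\guidinggeneric)(\alpha)$ for every $\alpha<j(\kappa)$, and at $\alpha=\kappa$ this reads $f\in j(\vec\guidinggeneric)(\kappa)=\guidinggeneric_\kappa$. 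It then remains to see that $X_f:=\{v\in V_\kappa\cap MS\colon f\restriction V_{v(0)}=f_{v(0)}\}$ lies in $\mathcal{U}(\beta)$ for every $\beta$ with $0<\beta<\len\mathcal{U}$. Since $\mathcal{U}(\beta)=\{X\colon\mathcal{U}\restriction\beta\in j(X)\}$ and $j(X_f)=\{v\in V^M_{j(\kappa)}\cap MS^M\colon j(f)\restriction V_{v(0)}=j(\vec f)(v(0))\}$, and since $(\mathcal{U}\restriction\beta)(0)=\kappa$, this comes down to checking $j(f)\restriction V_\kappa=j(\vec f)(\kappa)=f$. That identity holds because $f$ is a function from $V_\kappa$ to $V_\kappa$, so for $x\in V_\kappa$ one has $j(f)(x)=j(f)(j(x))=j(f(x))=f(x)$, using that $j\restriction V_\kappa$ is the identity.

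For clause (2), the parallel move is to show $\mathcal{U}\restriction\beta\in j(B)$ for each $\beta$ with $0<\beta<\len\mathcal{U}$, which by the definition of $\mathcal{U}(\beta)$ yields $B\in\bigcap_{0<\beta<\len\mathcal{U}}\mathcal{U}(\beta)$. Unwinding $j(B)$ at $u=\mathcal{U}\restriction\beta$ — using $(\mathcal{U}\restriction\beta)(0)=\kappa$, $\len(\mathcal{U}\restriction\beta)=\beta$, and $j(\vec\guidinggeneric)(\gamma)=\guidinggeneric_\gamma$ for $\gamma<\kappa$ — the thing to verify is that $M$ satisfies: ``for every $\vec f=\langle f_\gamma\colon\gamma<\kappa\rangle$ with $f_\gamma\in\guidinggeneric_\gamma$ there is $f$ with $X_f\in\bigcap_{0<\delta<\beta}\mathcal{U}(\delta)$.'' But any such $\vec f$ lies in $M\subseteq V$, so clause (1) applied in $V$ supplies $f=j(\vec f)(\kappa)\in\guidinggeneric_\kappa\subseteq M$ with $X_f\in\bigcap_{0<\delta<\len\mathcal{U}}\mathcal{U}(\delta)\subseteq\bigcap_{0<\delta<\beta}\mathcal{U}(\delta)$; as $X_f$ and the measures $\mathcal{U}(\delta)$ for $\delta<\beta$ all belong to $M$, this membership is absolute, so $M$ sees it. Hence $M$ satisfies the required statement, as desired.

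The only step that is more than bookkeeping is the appeal, in clause (1), to the standard fact from Radin's analysis (\cite{Radin1982}) that for each $\beta<\len\mathcal{U}$ the restriction $\mathcal{U}\restriction\beta$ belongs to $M$ and is recognized there as a measure sequence, necessarily with first coordinate $\kappa$; this is what makes ``$\mathcal{U}\restriction\beta\in j(X_f)$'' (and likewise ``$\mathcal{U}\restriction\beta\in j(B)$'') both meaningful and correct. Granting this, the rest is a matter of keeping the indices straight and combining elementarity with $M\subseteq V$ and the absoluteness of $\Delta_0$ membership, so I do not anticipate a real obstacle.
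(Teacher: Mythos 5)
Your proof is correct and follows essentially the same route as the paper: for (1) take $f = j(\vec f)(\kappa)$, note $f \in j(\vec{\guidinggeneric})(\kappa) = \guidinggeneric_\kappa$ by elementarity, and verify $\mathcal{U}\restriction\beta \in j(X_f)$ via $j(f)\restriction V_\kappa = f = j(\vec f)(\kappa)$; for (2) reflect clause (1) by checking $\mathcal{U}\restriction\beta \in j(B)$, using $M \subseteq V$ and the absoluteness of the relevant memberships. No gaps.
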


\begin{proof}
For (1), let $f = j(\langle f_\alpha \colon \alpha < \kappa\rangle) \in \guidinggeneric_\kappa$.  Note that $j(f) \restriction V_\kappa = f$.  Let $A = \{ \alpha : f \restriction V_\alpha = f_\alpha \}$.  The set $A'$ of measure sequences $u \in V_\kappa$ such that $u(0) \in A$ is in $\mathcal U(\beta)$ for every $\beta < \len \mathcal U$, since $\kappa \in j(A)$.

Now let $B$ be as in (2).  For all $\beta < \len \mathcal U$, $M$ can see that for all sequences $\langle f_\alpha \colon \alpha < \kappa\rangle$ as in (1), there is $f \in \guidinggeneric_\kappa$ such that $\{ v \in V_\kappa : f \restriction V_{v(0)} = f_{v(0)} \} \in \bigcap_{\alpha < \beta} \mathcal U(\alpha)$.  Thus $\mathcal U \restriction \beta \in j(B)$, and $B \in \mathcal U(\beta)$.
\end{proof}

Let us define the forcing notion $\mathbb{P}$. $p\in \mathbb{P}$ iff
\[p = \langle f_{-1}, q_0, f_0, \dots, q_n\rangle\]
Where
\[q_i = \langle u_i, A_i, F_i\rangle\]
and:
\begin{enumerate}
\item $u_i$ is a measure sequence. We denote $u_i(0)$ by $\alpha_i$.
\item If $\len u_i > 0$, $A_i \in \bigcap_{0 < \beta < \len(u_i)} u_i(\beta)$. Otherwise $A_i = \emptyset$.
\item $F_i \colon A_i \to V$, and for all $\beta \in A_i$, $F_i(\beta) \in \Col(\beta^+, \alpha_i)$ and $F_i \in \guidinggeneric_{u_i(0)}$.
\item $u_n = \mathcal{U}$.
\item For every $i \geq 0$, $f_i \in \Col(\alpha_i^{+}, \alpha_{i+1})$.
\item $f_{-1} \in \Col(\omega, \alpha_0)$.
\item If $v\in A_i$ then $v(0) > \sup \range f_{i-1}$.
\end{enumerate}

Let
\[p = \langle f_{-1}, u_0, A_0, F_0, f_0, \dots, u_n, A_n, F_n\rangle \in \mathbb{P}\]
\[p^\prime = \langle f^\prime_{-1}, u^\prime_0, A^\prime_0, F^\prime_0, f^\prime_0, \dots, u^\prime_m, A^\prime_m, F^\prime_m\rangle \in \mathbb{P}\]

$p \leq p^\prime$ iff:
\begin{enumerate}
\item $m \leq n$ and there is a strictly increasing sequence of indices $i_0, \dots, i_m$ such that $u_{i_j} = u^\prime_j$. Let us set $i_{-1} = -1$.
\item For all $-1 \leq j \leq m$, $f_{i_j} \supseteq f^\prime_j$.
\item $A_{i_j} \subseteq A^\prime_j$.
\item For every $-1 \leq j < m$ and every $i_j < k < i_{j+1}$, $u_k \in A_{i_{j+1}}$, $f_k \supseteq F^\prime_{i_{j+1}}(u_k)$ and $A_k \subseteq A^\prime_{j+1}$.
\item For all $j$ and $i_j < k \leq i_{j+1}$, for all $\beta \in A_k$, $F_k(\beta) \supseteq F^\prime_{i_{j+1}} (\beta)$.
\end{enumerate}

We say that $p \leq^\star q$ if $p\leq q$ and $\len p = \len q$.

For a condition $p = \langle f_{-1}, u_0, A_0, F_0, f_0, \dots, u_n, A_n, F_n\rangle \in \mathbb{P}$, let us denote:
\[\stem p = \langle f_{-1}, u_0, A_0, F_0, f_0, \dots, u_n\rangle\]

\begin{lemma}\label{lemma: radin centered}
$\mathbb{P}$ is $\kappa$-centered.
\end{lemma}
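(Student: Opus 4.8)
The plan is to exhibit $\mathbb{P}$ as a union of $\kappa$ many directed subsets, indexed by stems. For a stem $s = \langle f_{-1}, u_0, A_0, F_0, f_0, \dots, f_{n-1}, u_n \rangle$ put $C_s = \{ p \in \mathbb{P} : \stem p = s \}$. Since $\len p$ is determined by $\stem p$, we have $\mathbb{P} = \bigcup_s C_s$, so it suffices to show (a) each $C_s$ is directed, i.e.\ any finitely many of its elements have a common lower bound in $\mathbb{P}$, and (b) there are at most $\kappa$ stems.

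For (a), fix $p_0, \dots, p_{k-1} \in C_s$; writing $p_\ell = \langle f_{-1}, u_0, A_0, F_0, \dots, f_{n-1}, u_n, A_n^\ell, F_n^\ell \rangle$, only the top pair $(A_n^\ell, F_n^\ell)$ depends on $\ell$. Since $u_n = \mathcal{U}$, condition (2) gives $A_n^\ell \in \bigcap_{0 < \beta < \len \mathcal{U}} \mathcal{U}(\beta)$ and condition (3) gives $F_n^\ell \in \guidinggeneric_\kappa$. Recall from the construction in Lemma~\ref{lemma: guiding generics} that for measurable $\delta < \kappa$ the set $\guidinggeneric_\delta$ is enumerated as $\{ g^\delta_\alpha : \alpha < \delta^+ \}$ with the coherence property that $g^\delta_{\alpha'}(v)$ extends $g^\delta_\alpha(v)$ as a Levy condition whenever $\alpha < \alpha' < \delta^+$ and $v(0)$ lies in a certain club $C^\delta_{\alpha\alpha'} \subseteq \delta$; by elementarity the same holds of $\guidinggeneric_\kappa = j(\langle \guidinggeneric_\delta : \delta < \kappa \rangle)(\kappa) = \{ g_\alpha : \alpha < \kappa^+ \}$. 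Write $F_n^\ell = g_{\alpha_\ell}$, pick $\gamma < \kappa^+$ above the finitely many $\alpha_\ell$, and let $C = \bigcap_{\ell < k} C_{\alpha_\ell \gamma}$, a club subset of $\kappa$; then $g_\gamma(v)$ extends $F_n^\ell(v)$ for all $\ell < k$ whenever $v(0) \in C$. Since $\crit j = \kappa$ we have $j(C) \cap \kappa = C$, which is unbounded in $\kappa$, so $\kappa \in j(C)$ because $j(C)$ is closed; hence for every $1 \le \beta < \len \mathcal{U}$, $(\mathcal{U} \restriction \beta)(0) = \kappa \in j(C)$, i.e.\ $\{ v \in V_\kappa : v(0) \in C \} \in \mathcal{U}(\beta)$. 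Therefore
\[ A_n^* := \{ v \in V_\kappa : v(0) \in C \} \cap \bigcap_{\ell < k} A_n^\ell \in \bigcap_{0 < \beta < \len \mathcal{U}} \mathcal{U}(\beta), \]
and $r := \langle f_{-1}, u_0, A_0, F_0, \dots, f_{n-1}, u_n, A_n^*, g_\gamma \restriction A_n^* \rangle$ is a condition, condition (7) at the top coordinate being inherited from $p_0$ since $A_n^* \subseteq A_n^0$. Because all the stems agree, the relation $r \le p_\ell$ reduces exactly to $A_n^* \subseteq A_n^\ell$ together with "$g_\gamma(v)$ extends $F_n^\ell(v)$ for $v \in A_n^*$," both of which hold; so $C_s$ is directed.

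For (b), $u_n = \mathcal{U}$ is fixed, while for each $i < n$ the entry $u_i$ ranges over the measure sequences in $V_\kappa$ (at most $|V_\kappa| = \kappa$ many, as $\kappa$ is inaccessible), $A_i$ over $\mathcal{P}(V_{u_i(0)})$ (of size $< \kappa$, by inaccessibility together with \GCH), $F_i$ over $\guidinggeneric_{u_i(0)}$ (of size $u_i(0)^+ < \kappa$), $f_i$ over $\Col(u_i(0)^+, u_{i+1}(0))$ (of size $< \kappa$), and $f_{-1}$ over $\Col(\omega, u_0(0))$ (of size $< \kappa$). As a stem is a finite sequence of such data, there are at most $\kappa$ of them, and $\mathbb{P} = \bigcup_s C_s$ then presents $\mathbb{P}$ as a union of $\le \kappa$ directed, hence centered, sets. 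The only step that is not routine bookkeeping with the order on $\mathbb{P}$ is merging the top guiding-generic functions $F_n^\ell$ into a single $F \in \guidinggeneric_\kappa$ that dominates them all on one set that is measure-one for \emph{every} $\mathcal{U}(\beta)$ simultaneously; this is exactly why we use that $\guidinggeneric_\kappa$ is, by its construction, linearly preordered modulo the club filter on $\kappa$, and that club-pullbacks belong to each $\mathcal{U}(\beta)$.
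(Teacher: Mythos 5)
Your proof is correct, and it is the standard argument the paper intends but leaves unwritten (the lemma is stated without proof): conditions sharing a stem are finitely compatible, and there are only $\kappa$ stems since $\kappa$ is inaccessible. You also correctly identify and handle the one genuinely non-routine point, namely that merging the top guiding-generic functions requires a common refinement on a set that is measure one for \emph{every} $\mathcal{U}(\beta)$ simultaneously, which you get from the mod-club linearity of the sequence $\langle p_\alpha\rangle$ built in Lemma~\ref{lemma: guiding generics} together with the fact that $\kappa\in j(C)$ for every club $C\subseteq\kappa$; the only slip is that $f_{n-1}$ ranges over $\Col(\alpha_{n-1}^+,\kappa)$, which has size $\kappa$ rather than ${<}\kappa$, but this does not affect the count of stems.
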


For any measure sequence $u$ which is derived from some elementary embedding, let us denote by $\mathbb{P}^{u}$ the forcing notion which is defined as $\mathbb{P}$ when replacing $\mathcal{U}$ by $u$. Lemma \ref{lemma: radin centered} holds for $\mathbb{P}^u$. We have the standard decomposition:
\begin{claim}
For every condition $p\in \mathbb{P}$ of length $n$, and every measure sequence $u$ in the stem of $p$, the forcing $\mathbb{P} \restriction p$ of all conditions below $p$ splits into the product:

$\mathbb{P} \restriction p = \mathbb{P}^{>u} \restriction p_{\uparrow} \times \mathbb{P}^{u} \restriction p_{\downarrow}$, where $\mathbb{P}^{>u}$ is the forcing $\mathbb{P}$ when we modify the definition of $\alpha_{-1}$ to be $u(0)^+$. $p_{\uparrow}, p_{\downarrow}$ is the decomposition of the condition $p$ to the parts above and below $u$, respectively.
\end{claim}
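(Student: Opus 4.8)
The plan is to write down the obvious ``cut at $u$'' bijection and verify it is an order isomorphism; the entire content of the claim is that nothing in the definition of $\mathbb{P}$ couples the two sides of the measure sequence $u$. Fix $p = \langle f_{-1}, u_0, A_0, F_0, f_0, \dots, u_n, A_n, F_n\rangle$ and a measure sequence $u$ occurring in $\stem p$; we may assume $u$ is not the top measure sequence $\mathcal{U}$ (that case is vacuous and not needed in the sequel), so $u = u_k$ with $k<n$. If $q\leq p$, then since $u$ lies in $\stem p$, the index sequence witnessing $q\leq p$ carries the occurrence of $u$ in $p$ to an occurrence of $u$ among the measure sequences of $q$; writing $q = \langle g_{-1}, v_0, B_0, G_0, \dots, v_N, B_N, G_N\rangle$ with $v_\ell = u$, set
\begin{align*}
q_\downarrow &= \langle g_{-1}, v_0, B_0, G_0, \dots, g_{\ell-1}, v_\ell, B_\ell, G_\ell\rangle, \\
q_\uparrow &= \langle g_\ell, v_{\ell+1}, B_{\ell+1}, G_{\ell+1}, \dots, v_N, B_N, G_N\rangle.
\end{align*}
Then $q_\downarrow$ is a condition of $\mathbb{P}^{u}$ below $p_\downarrow$: its top measure sequence is $u = v_\ell$, all of its data involves only $V_{u(0)}$ and the guiding generics $\guidinggeneric_\beta$ with $\beta\leq u(0)$, and the requirement $B_\ell\in\bigcap_{0<\beta<\len u}u(\beta)$ on the top is exactly what clause (2) demands of the top of a condition of $\mathbb{P}^{u}$. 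And $q_\uparrow$ is a condition of $\mathbb{P}^{>u}$ below $p_\uparrow$: the collapse $g_\ell\in\Col(u(0)^+, v_{\ell+1}(0))$ lying immediately above $u$ has exactly the domain and target that the bottom collapse of $\mathbb{P}$ is required to have once the floor $\alpha_{-1}$ is redefined to be $u(0)^+$, and clauses (1)--(7) for the remaining entries are inherited from $q$. Thus $q\mapsto(q_\uparrow,q_\downarrow)$ maps $\mathbb{P}\restriction p$ injectively into $\mathbb{P}^{>u}\restriction p_\uparrow\times\mathbb{P}^{u}\restriction p_\downarrow$.

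For surjectivity I would take $r_0\leq p_\downarrow$ in $\mathbb{P}^{u}$ and $r_1\leq p_\uparrow$ in $\mathbb{P}^{>u}$, let $\langle u, B, G\rangle$ be the top triple of $r_0$ and $h$ the bottom collapse of $r_1$, and let $q$ be obtained from $r_0$ by inserting $h$ directly after $\langle u, B, G\rangle$ and then appending the rest of $r_1$. One checks $q$ is a condition of $\mathbb{P}$: clauses (1)--(4) and (6) are inherited coordinatewise; clause (5) at the slot of $u$ holds because, by the floor modification of $\mathbb{P}^{>u}$, $h\in\Col(u(0)^+,\cdot) = \Col((u_\ell(0))^+,\cdot)$; clauses (2)--(3) on the set and function attached to $u$ say the same thing for $u$ as an interior sequence of $q$ as for $u$ as the top of $r_0$; and the only diagonal constraint, clause (7), couples a measure sequence only to the collapse immediately below it, so at the seam it constrains $B$ (through the last collapse of $r_0$) and the $A$-set of the first measure sequence of $r_1$ (through $h$) separately. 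Then $q\leq p$ because the witnessing index sequences for $r_0\leq p_\downarrow$ and $r_1\leq p_\uparrow$ concatenate, the comparison of $h$ with the bottom collapse of $p_\uparrow$ being clause (2) of the order of $\mathbb{P}$ at the collapse just above $u$. Running this concatenation/restriction argument in both directions shows that $q\mapsto(q_\uparrow,q_\downarrow)$ and its inverse both preserve $\leq$, so it is an order isomorphism, which we use to identify the two posets.

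Since this is a ``standard decomposition,'' there is no serious obstacle; the proof is bookkeeping with the tuple notation. The step deserving the most care — and the reason the statement involves the modified poset $\mathbb{P}^{>u}$ rather than plain $\mathbb{P}$ — is precisely checking that the two sides are genuinely decoupled: the floor $\alpha_{-1} = u(0)^+$ is exactly calibrated so that the collapse sitting immediately above $u$ is a legal bottom collapse of $\mathbb{P}^{>u}$, and clause (7) never reaches across a measure sequence. Granting these two observations, the factorization is forced, in exact analogy with the decomposition already used for the Prikry forcing with interleaved collapses earlier in the paper.
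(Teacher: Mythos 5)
Your argument is correct: the paper states this as ``the standard decomposition'' and omits the proof entirely, and your cut-at-$u$ bijection with the two verifications you isolate (the floor $\alpha_{-1}=u(0)^+$ making the collapse just above $u$ a legal bottom collapse of $\mathbb{P}^{>u}$, and clause (7) only ever constraining $A_i$ against the collapse $f_{i-1}$ immediately below it, so never across the seam) is exactly the bookkeeping the authors are invoking. Nothing is missing.
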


\begin{lemma}
$\mathbb{P}$ satisfies the Prikry property. Moreover, this is true for $\mathbb{P}^{>u}$ for every measure sequence $u$.
\end{lemma}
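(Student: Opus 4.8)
The plan is to follow the standard Prikry-style argument for Radin forcing with interleaved collapses, splitting the verification into two conceptually separate pieces as dictated by the decomposition claim just stated: a "diagonalization over the top measure" step and a "collapse factor is closed" step. Fix a condition $p = \langle f_{-1}, u_0, A_0, F_0, f_0, \dots, u_n, A_n, F_n\rangle$ with $u_n = \mathcal{U}$, and a statement $\Phi$ of the forcing language. First I would recall that the lower parts of the condition, namely the factors $\Col(\alpha_i^+,\alpha_{i+1})$ together with $\mathbb{P}^{u_i}\restriction p_\downarrow$ for $i<n$, are (below $p$) a product of small and/or closed forcings that add no bounded subsets of $\alpha_n$; formally, $\mathbb{P}\restriction p \cong (\text{small/closed part}) \times \mathbb{P}^{>u_{n-1}}\restriction p_\uparrow$. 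So it suffices to handle the "top" forcing, i.e. to prove the Prikry property and its $\mathbb{P}^{>u}$-version for the piece sitting above the largest measure sequence in the stem, by induction on the length of the measure sequence $\mathcal{U}$ (equivalently, on $o(\kappa)$). This reduces everything to the following core task: given $q\leq^\star p$ with $\len q = n$ (so only the top block above $u_{n-1}$ is free), find $q' \leq^\star q$ deciding $\Phi$.

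The core task is handled by the usual two-step shrinking. Step one: for each measure sequence $v$ in the "one-point extension set" $A_n$ (the set of possible next points below $\kappa=\alpha_n(0)$), consider the condition $q^\frown v$ obtained by adding $v$ to the stem, with the appropriate collapse condition $F_n(v)$ attached and the sets $A_k$, $F_k$ restricted below. By the inductive hypothesis applied to $\mathbb{P}^{>v}$ (whose top measure sequence $v$ is shorter, or to the collapse factors which are small/closed), we can find $q_v \leq^\star q^\frown v$ deciding $\Phi$ — and we can arrange that $q_v$ only shrinks the parts of $q$ living strictly above $v$, leaving an "$A$-component" for each $v$. Crucially, the set of possible decisions is $\{$yes, no, undecided$\}$, a set of size $< \kappa = $ the completeness of $\bigcap_{0<\beta<\len\mathcal{U}}\mathcal{U}(\beta)$; and the associated measure-one "side conditions" can be amalgamated using normality (diagonal intersection over measure sequences) to produce a single set $A_n^* \in \bigcap_{0<\beta<\len\mathcal{U}}\mathcal{U}(\beta)$ and a single strengthening so that the decision of $\Phi$ in $q^\frown v$ depends only on a fixed coarse parameter (essentially $\len v$ and finitely much combinatorial data), uniformly for $v \in A_n^*$. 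Here I would also invoke Lemma~\ref{lemma: guiding generics reflection}(2): on a measure-one set $B$ the guiding generics reflect, so the collapse parts $F_n(v)$ can be chosen compatibly, which is exactly what lets the side conditions cohere into a legitimate $F_n^*$ with $F_n^* \in \guidinggeneric_\kappa$.

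Step two: having arranged uniformity, one runs the standard "if $\Phi$ were not already decided, a density argument forces a one-point extension to decide it differently, contradicting uniformity" dichotomy, together with the $\kappa$-closure of $\mathbb{P}^\star$ below a fixed stem (every ${<}\kappa$-sized directed set of conditions with a fixed stem has a $\leq^\star$-lower bound, by clause (2)/(3) of the order and the $\kappa$-directed-closure of the $\guidinggeneric$'s) to absorb the iterated shrinking at limit stages. This yields $q' \leq^\star q$ deciding $\Phi$. The same argument with $u_{n-1}$ in place of $\mathcal{U}$, and with $\alpha_{-1}$ redefined to be $u(0)^+$, gives the statement for $\mathbb{P}^{>u}$; indeed the $\mathbb{P}^{>u}$ case is what the induction feeds on. The main obstacle I anticipate is bookkeeping the simultaneous shrinking of the many components $(A_k, F_k)$ while keeping all the guiding-generic side conditions coherent — that is, making sure that the diagonal-intersection amalgamation in step one genuinely lands inside $\guidinggeneric_\kappa$ and inside the correct intersection of measures, which is precisely where Lemmas~\ref{lemma: guiding generics} and~\ref{lemma: guiding generics reflection} are used, rather than any new idea; the combinatorial skeleton is entirely the classical Radin Prikry property.
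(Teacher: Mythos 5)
Your treatment of the top block (the case of a condition $\langle f_{-1}, \mathcal U, A, F\rangle$ with empty stem above the last named measure sequence) matches the paper's argument in outline: dense subsets of the collapses are absorbed into the guiding generic via Lemma~\ref{lemma: guiding generics}, one-point extensions are classified three ways and stabilized on measure-one sets by normality and diagonal intersection, Lemma~\ref{lemma: guiding generics reflection} keeps the amalgamated collapse part inside $\guidinggeneric_\kappa$, and a minimal-length density argument rules out a deciding extension of strictly greater length. That part of the plan is sound.

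The gap is in your reduction of the general case. After decomposing $\mathbb P\restriction p$ into a lower part times $\mathbb P^{>u_{n-1}}\restriction p_\uparrow$, you assert that ``it suffices to handle the top forcing'' and formulate the core task as finding $q'\leq^\star q$ in which \emph{only the top block is free}. This cannot work: a direct extension is permitted to, and in general must, strengthen the lower part as well. For instance, if $\Phi$ is the statement $\dot g(0)=0$ about the collapse generic added by the $\Col(\omega,\alpha_0)$ coordinate, then no extension of the top block alone decides $\Phi$; one must extend $f_{-1}$. (Your parenthetical that the lower factors ``add no bounded subsets of $\alpha_n$'' is also false --- they are collapses.) The paper closes this gap with a two-stage argument that your plan omits: first, using that $\mathbb P^u$ is $u(0)$-centered, enumerate its $u(0)$ many stems $\langle r_i : i < u(0)\rangle$ and build a $\leq^\star$-decreasing sequence $\langle p_i : i\leq u(0)\rangle$ in $\mathbb P^{>u}$ (using the closure of $\leq^\star$ there) so that $p_{u(0)}$ uniformly decides, for each lower stem $r_i$, whether and how a condition with that stem decides $\Phi$; second, apply the induction hypothesis to the \emph{lower} forcing $\mathbb P^u$ to obtain a direct extension $r'\leq^\star p_\downarrow$ deciding which way $p_{u(0)}$ decides $\Phi$. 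The final direct extension is $r'{}^\frown p_{u(0)}$, which extends both factors. Without this diagonalization over lower stems and the resulting direct extension of the lower part, the induction does not close.
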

\begin{proof}
We sketch a proof for the lemma. The proof is similar to the one in \cite[Section 3, Section 4]{Gitik2010}, with minor changes. We will prove it only for the case of $\mathbb{P}$. The other cases are similar.
Let $\Phi$ be a statement in the forcing language. Let $p\in \mathbb{P}$ be a condition.

In order to show that $\mathbb{P}$ satisfies the Prikry property, we will show that this is true for $\mathbb{P}^u$, for every measure sequence $u$.

Let us assume, by induction, that this is true for every measure sequence $u$ such that $u(0) < \kappa$.

Let us start with the case $\len p = 0$:

\begin{claim}
Assume that $p = \langle f_{-1}, \mathcal U, A, F\rangle$. There is a direct extension of $p$ that decides the truth value of $\Phi$.
\end{claim}
\begin{proof}
First, let us consider, for every stem $s$, the following sets:
\[D^0_{s, v} = \{g \leq F(v) \colon \exists A_v, F_v, A^\prime, F^\prime, s^\smallfrown \langle A_v, F_v, g, A^\prime, F^\prime\rangle \parallel \Phi\}\]
\[D^1_{s, v} = \{g \leq F(v) \colon \forall A_v, F_v, A^\prime, F^\prime, \forall g' \leq g,\, s^\smallfrown \langle A_v, F_v, g', A^\prime, F^\prime\rangle \not\parallel \Phi\}\]
Clearly, $D^0_{s, v} \cup D^1_{s,v}$ is a dense subset of $\Col(v(0)^{+},\kappa)$.
By the distributivity of $\Col(v(0)^{+},\kappa)$, the intersection $D_v = \bigcap_{s\in V_{v(0)}} (D^0_{s, v} \cup D^1_{s,v})$ is a dense subset of $\Col(v(0)^{+},\kappa)$. By Lemma \ref{lemma: guiding generics}, there is $F'\in \guidinggeneric_\kappa$ such that the set of all $v\in V_\kappa$ with $F'(v)\in D_v$ belongs to $\bigcap_{0 < \alpha < \len \mathcal{U}} \mathcal{U}(\alpha)$. Let $A'$ be the intersection of the above set with $A$. Let $F^\star$, be a condition in $\guidinggeneric_\kappa$ stronger than $F, F'$.

Let us define for every possible stem of a condition stronger than $\langle f_{-1}, \mathcal{U}, A', F^\star\rangle$,
\[s = \langle f^s_{-1}, u_0^s, A_0^s, F_0^s, f_0^s, \dots, u_{k-1}^s, A_{k-1}^s, F_{k-1}^s, f_{k-1}^s \rangle,\]
and for every $\alpha < \len \mathcal{U}$, a set $A(s, \alpha)\in \mathcal{U}(\alpha)$. This is a measure one set, relative to $\mathcal{U}(\alpha)$, such that one of the three possibilities holds for it:
\begin{enumerate}
\item For every $v\in A(s,\alpha)$ there is a choice of $B^s_v, F^s_v, f^s_v$ such that an extension of $p$ with the stem $s^\smallfrown \langle v, B^s_v, F^s_v, f^s_v\rangle$ forces $\Phi$.
\item For every $v\in A(s,\alpha)$ there is a choice of $B^s_v, F^s_v,f^s_v$ such that an extension of $p$ with the stem $s^\smallfrown \langle v, B^s_v, F^s_v,f^s_v\rangle$ forces $\neg\Phi$.
\item For every $v\in A(s,\alpha)$, there is no extension of $p$ with stem $s ^\smallfrown \langle v, B_v, F_v,f_v\rangle$, that forces either $\Phi$ or $\neg\Phi$.
\end{enumerate}
Using the closure of the generic filter $\guidinggeneric_{v(0)}$, we may assume that $F^s_v, f^s_v$ depend only on $v$ (by taking the lower bound of all the $F^s_v, f^s_v$ with $s \in V_{v(0)}$ - there are only $v(0)$ many such stems).
Let $A(\alpha)$ be the diagonal intersection of all the $A(s, \alpha)$, and let $A^\star = A' \cap \bigcup_{\alpha < \len \mathcal{U}} A(\alpha)$. Let $p^\star = \langle f_{-1}, \mathcal U, A^\star, F^\star\rangle$.

Let us observe first that for every $v\in A(s, \alpha) \cap A'$, if one of the first two options holds, then we may take $f_v = F^\star(v)$. Recall that $F^\star(v) \in D^0_{s, v} \cup D^1_{s,v}$.  $f_v \leq F^\star(v)$, and it decides the truth value of $\Phi$. Thus, $F^\star(v) \in D^0_{s, v}$. In particular, there are $B_v^\prime, F_v^\prime$ that together with $F^\star(v)$ decide the truth value of $\Phi$. By compatibility, a condition with stem $s^\smallfrown \langle v, B_v \cap B_v^\prime, F_v \wedge F^\prime_v, F^\star(v)\rangle$ must force the same truth value for $\Phi$ as a condition with stem $s^\smallfrown \langle v, B_v, F_v, f_v\rangle$.

Let us take an extension of $p^\star$ which decides $\Phi$ and has a minimal length. If it is a direct extension, we are done. Let us assume, towards a contradiction, that this extension has length $n + 1$:
\[r = \langle f_{-1}^r, v^r_0, A^r_0, F^r_0,f^r_0 \dots, \mathcal U, A^r_{n+1}, F^r_{n+1}\rangle\]
Let $s$ be the lower stem (up to length $n$). By our assumption, there is $\alpha$ such that $A(s, \alpha)\in \mathcal{U}(\alpha)$ contains only measure sequences $v$, which when appended to $s$ together with $A_v, F_v$, form a condition that decides the statement in the same direction as $r$. Without loss of generality, they all force $\Phi$.

For every $v \in A(s, \alpha)$, $F_v$ is stronger than the restriction of $F^\star$ to $A_v$ (pointwise) and belongs to $\guidinggeneric_{v(0)}$.
Let us consider the function $g\colon A(s, \alpha) \to V_\kappa$, $g(v) = \langle A_v, F_v\rangle$. By the definition of $\mathcal U(\alpha)$, $\mathcal{U}\restriction \alpha \in \dom j(g)$. Let $\langle A^{<\alpha}, F^{<\alpha}\rangle = j(g)(\mathcal{U}\restriction \alpha)$.

By elementarity, $A^{<\alpha} \in \bigcap_{\beta < \alpha} \mathcal{U}(\beta)$ and for $\mathcal{U}(\alpha)$-almost all $v \in V_\kappa$, $A^{<\alpha} \cap V_{v(0)} = A_v$ and $F^{<\alpha} \restriction V_{v(0)} = F_v$. Let $A^{\alpha}$ be the collection of all $v \in A(s,\alpha)$ that satisfy the above assertion. By Lemma \ref{lemma: guiding generics reflection}, $F^{<\alpha}\in \guidinggeneric_\kappa$, so let $F^{\star\star} = F^{<\alpha} \wedge F^\star$.

Let $A^{>\alpha}$ to be all the sets that reflect $A^{\alpha}$, namely $A^{>\alpha} = \{u\in A^\star \colon \exists \beta,\, A^{\alpha} \cap V_{u(0)} \in u(\beta)\}$. Now let $A^{\star\star} = A^{<\alpha} \cup A^\alpha \cup A^{>\alpha}$, and let us restrict the domain of $F^{\star\star}$ to $A^{\star\star}$.

Let us show that any extension of the condition $q_s = s ^\smallfrown \langle\mathcal{U}, A^{\star\star}, F^{\star\star}\rangle$ is compatible with a choice of an element from $A(s, \alpha)$. Therefore, any extension of the current condition is compatible with an extension that forces $\Phi$.

This is true by our choice of $A^{\star\star}$. If we extend $q_s$ by only adding elements below $v_n$ and strengthening the collapses, then this condition is compatible with any condition in which we extend $q_s$ by adding a single element from $A(s, \alpha)$ above $v_{n-1}$. Otherwise, let $q \leq q_s$ be any extension of $q_s$ and assume that the Radin club of $q$ contains elements above $v_{n-1}$. Let $m = \len q$,
\[q = \langle f_{-1}^q, u_0^q, A_0^q,F_0^q,f_0^q, \dots, u_m^q, A^q_m, F^q_m \rangle\]
and assume that $k < m$ is the first index of an element in the Radin sequence which
is a measure sequence of length $>0$ such that $(A^{\alpha} \cup A^{> \alpha}) \cap V_{u_k(0)} \in \bigcup_{\beta < \len u_k} u_k(\beta)$. If there is no such element, let us pick any nontrivial measure sequence $v \in A^\alpha \cap A^q_m$.  Then $A^{<\alpha} \cap A^q_i \in \bigcap_{\beta < \len u_i} u_i(\beta)$ for all $i < m$, and $A_v = A^{<\alpha} \cap V_{v(0)}$.  Thus adding $\langle v,A_v,F^{\star\star} \restriction A_v \rangle$ to $q_s$ results in a condition that forces $\Phi$ and is compatible with $q$.

So, let us assume that there is such element $u_k^q$. If $A^{\alpha} \in u_k^q(\beta)$ for some $\beta < \len u_k^q$, then there is $v \in A^q_k$ such that $A_v \cap A^q_k \in \bigcap_{\beta < \len v} v(\beta)$, so as above, $q_s$ may be extended by $\langle v,A_v,F^{\star\star} \restriction A_v \rangle$ to get a condition compatible with $q$ that forces $\Phi$.
If $A^{>\alpha} \in u_k^q(\beta)$ for some $\beta < \len u_k^q$, then by our choice of $A^{>\alpha}$ there is some $v \in A^q_k$ that can be added to $q$ to put us into the previous case.

We conclude that in any case, any extension of $q_s$ has an extension that forces $\Phi$ and thus, $q_s \Vdash \Phi$. But this is a contradiction to the minimality of $n$.
\end{proof}
Let us continue to the general case.

Let $p\in \mathbb{P}$ be a general condition. Let us assume, by induction, that Prikry property holds for every shorter condition. By the claim above, we may assume that $\len p > 0$.
We want to find a direct extension of $p$ that decides the truth value of statement $\Phi$.

We can decompose the forcing notion $\mathbb{P} \restriction p$ into a product $\mathbb{P}^{>u}\restriction p_\uparrow \times \mathbb{P}^u \restriction p_\downarrow$ for some measure sequence $u$ that appears in $p$.

Recall that $\mathbb{P}^u$ is $\alpha$-centered, where $\alpha = u(0)$. Let $\langle r_i \colon i < \alpha\rangle$ enumerate all possible stems of conditions in $\mathbb{P}^u$.

Let us define, by induction, a sequence of conditions in $\mathbb{P}^{>u}$, $\langle p_i \colon i \leq \alpha\rangle$ in the following way. Let $p_0 = p_\uparrow$.  Given $p_i$, let $p_{i+1} \leq^\star p_i$ decide whether there is a condition in $\mathbb P^u$ with stem $r_i$ deciding $\Phi$, and if so, whether it forces $\Phi$ or $\neg \Phi$.  At limit ordinals $i \leq \alpha$, we use the closure of the order $\leq^\star$ and take $p_i$ to be a lower bound of $p_{i'}$ for every $i' < i$.

If $G \times H \subseteq \mathbb P^u \times \mathbb{P}^{>u}$ is generic with $\langle p_\downarrow,p_\alpha \rangle$, then there is a condition $\langle r,q \rangle \in G \times H$ deciding $\Phi$.  The stem of $r$ is $r_i$ for some $i < \alpha$, and $p_\alpha$ must already decide which way $r$ decides $\Phi$.  Thus it is forced by $\mathbb P^u$ that $p_\alpha$ decides $\Phi$.  By induction, we may take a direct extension $r' \leq^\star p_\downarrow$ which decides which way $p_\alpha$ decides $\Phi$.  $r^{\prime \smallfrown} p_\alpha$ is the desired direct extension of $p$.
\end{proof}

Recall that $\mathcal{U}$ was derived from an elementary embedding $j\colon V\to M$.
\begin{lemma}
Assume that $V_{\kappa + 2} \subseteq M$. Then $\mathbb{P}$ preserves the measurability of $\kappa$.
\end{lemma}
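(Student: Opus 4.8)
The plan is to show that $\kappa$ remains a regular strong limit cardinal in every $\mathbb P$-generic extension $V[G]$. The strong-limit half is bookkeeping built from the factorization of $\mathbb P$ at a point of the Radin club together with the direct-extension machinery already used for the Prikry property; the regularity of $\kappa$ is the substantive point, and it is exactly where $V_{\kappa+2}\subseteq M$ is needed. First I would extract from the hypothesis that the measure sequence is long. Each $\mathcal U(\beta)$ is a subset of $\mathcal P(V_\kappa)=V_{\kappa+1}$, hence $\mathcal U(\beta)\in V_{\kappa+2}\subseteq M$; and for $i<\kappa$ the sequence $\mathcal U\restriction i$ may be coded by a subset of $V_\kappa$ via a pairing $V_\kappa\times V_\kappa\to V_\kappa$, so $\mathcal U\restriction i\in V_{\kappa+1}\subseteq M$ and the measure-sequence construction does not halt below $\kappa$; thus $\len\mathcal U\geq\kappa$. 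Since $V_{\kappa+1}\subseteq M$, $M$ also computes correctly that $\kappa$ is inaccessible and carries a measure sequence of length $\geq\kappa$, and by Lemma~\ref{lemma: guiding generics reflection} that the guiding-generic reflection of Lemma~\ref{lemma: guiding generics}(2) holds at $\kappa$. Hence the set $B$ of all $\nu<\kappa$ that are inaccessible, carry a measure sequence of length $\geq\nu$, and satisfy the conclusion of Lemma~\ref{lemma: guiding generics reflection}(2) lies in $\mathcal U(\beta)$ for every $\beta$ with $0<\beta<\len\mathcal U$, because $\kappa\in j(B)$. Shrinking the measure-one sets in conditions, I may assume every measure sequence occurring in a condition other than $\mathcal U$ itself has first coordinate in $B$; in particular the Radin generic club $C_G$ is cofinal in $\kappa$ and consists of $V$-inaccessible ordinals.

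\emph{$\kappa$ is a strong limit in $V[G]$.} Fix $\gamma<\kappa$, work in $V[G]$, choose $u\in C_G$ with $u(0)>\gamma$, and fix $p\in G$ whose stem contains $u$. Using the decomposition $\mathbb P\restriction p=\mathbb P^{>u}\restriction p_\uparrow\times\mathbb P^{u}\restriction p_\downarrow$ recalled before the Prikry-property lemma, note $|\mathbb P^{u}\restriction p_\downarrow|\leq 2^{u(0)}<\kappa$ since $\kappa$ is a strong limit in $V$; writing $\alpha_0$ for the first point of $p_\uparrow$ above $u(0)$, $\alpha_0\in B$ is inaccessible, so $\alpha_0>2^{u(0)}\geq|\mathbb P^{u}\restriction p_\downarrow|$. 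By the standard interaction of small forcing with the highly complete measures and highly closed guiding generics used above $\alpha_0$, the Prikry property for $\mathbb P^{>u}$ persists over $V[G_{\downarrow u}]$, where $G_{\downarrow u}$ is the $\mathbb P^{u}\restriction p_\downarrow$-part of $G$. As in the Prikry-property argument the $\leq^\star$-order of $\mathbb P^{>u}$ is $u(0)^+$-closed; combined with the Prikry property this shows $\mathbb P^{>u}$ adds no new function $\gamma\to u(0)$ over $V[G_{\downarrow u}]$ — decide such a name coordinate by coordinate along a $\leq^\star$-decreasing chain of length $\gamma$, using that each coordinate has at most $u(0)$ possible values and that $\leq^\star$ is closed enough to amalgamate the value-deciding direct extensions and to take the limit. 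Hence $\mathcal P(\gamma)^{V[G]}=\mathcal P(\gamma)^{V[G_{\downarrow u}]}$, which has size $<\kappa$, so $2^\gamma<\kappa$ in $V[G]$.

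\emph{$\kappa$ is regular in $V[G]$.} Suppose towards a contradiction that some $p$ forces $\dot f:\check\mu\to\check\kappa$ to be cofinal, $\mu<\kappa$ regular; the aim is to produce $q\leq p$ and $\rho<\kappa$ with $q\Vdash\dot f[\check\mu]\subseteq\check\rho$. I would first extend $p$ so that its stem contains a measure sequence $w$ with $w(0)>\mu$ and $w(0)\in B$, and split $\mathbb P\restriction p=\mathbb P^{>w}\restriction p_\uparrow\times\mathbb P^{w}\restriction p_\downarrow$. The factor $\mathbb P^{w}$ has size $<\kappa$, hence is $\kappa$-c.c., so any $\mathbb P^{w}$-name for a map $\mu\to\kappa$ has bounded range, and $\mathbb P^{>w}$ adds no new map $\mu\to w(0)$ by the argument above. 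The obstruction is that, a priori, $\mathbb P^{>w}$ might still add a cofinal map $\mu\to\kappa$ from the Radin structure above $w(0)$; ruling this out is where $\len\mathcal U\geq\kappa$ is used. Since $B\in\mathcal U(\beta)$ for cofinally many $\beta<\len\mathcal U$, every measure sequence $v$ appearing in an extension carries a measure sequence of length $\geq v(0)$, so one may argue by induction on $v(0)$ (the shorter instances being the assertion that $\mathbb P^{v}$ preserves the inaccessibility of $v(0)$) that a potential counterexample $\dot f$ can be reflected downward and localized — by an amalgamation argument of the same shape as the proof of the Prikry property, but tracking the \emph{values} taken by $\dot f$ rather than the truth value of a single statement — so that a direct extension of $p$ already forces $\dot f[\check\mu]$ bounded below some $\rho<\kappa$, a contradiction.

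The step I expect to be the main obstacle is precisely this last one: turning the length of $\mathcal U$ into a localization of an arbitrary name for a cofinal map $\mu\to\kappa$ below a single point of $C_G$. It mirrors, at the level of the whole forcing, the long inductive/diagonalization argument already carried out for the Prikry property, and it genuinely requires $\len\mathcal U$ to be large (equivalently $V_{\kappa+2}\subseteq M$): with only $\len\mathcal U=2$ the forcing is Prikry forcing with collapses and $\kappa$ becomes singular. Once $\kappa$ is shown to be both regular and a strong limit in $V[G]$, it is inaccessible there, as desired.
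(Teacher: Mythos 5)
Your argument has a genuine gap at exactly the point you flag as ``the main obstacle'': the regularity of $\kappa$. Everything you write there is a plan, not a proof --- ``one may argue by induction on $v(0)$ \dots\ so that a direct extension of $p$ already forces $\dot f[\check\mu]$ bounded'' is precisely the content of the lemma, and the localization/amalgamation argument you gesture at is never carried out. Worse, the hypothesis you actually extract, $\len\mathcal U\geq\kappa$, is far weaker than what $V_{\kappa+2}\subseteq M$ gives and is not a hypothesis from which your sketch could succeed in general: the cofinality of $\kappa$ in a Radin extension is governed by the cofinality of $\len\mathcal U$, and measure sequences of length only slightly above $\kappa$ (e.g.\ of countable cofinality) singularize $\kappa$. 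So an argument that purports to derive regularity from $\len\mathcal U\geq\kappa$ alone cannot be correct as stated. (Your strong-limit paragraph is fine in outline --- the factorization at a point of the Radin club, the smallness of the lower part, and the closure of $\leq^\star$ together with the Prikry property for the upper part do show that bounded subsets of $\kappa$ come from a small forcing --- but that was never the issue.)

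The paper's proof uses the hypothesis in a sharper way and is complete. Under \GCH, $V_{\kappa+2}\subseteq M$ gives $\len\mathcal U\geq\kappa^{++}$, while $|V_{\kappa+1}|=\kappa^+$, so there are only $\kappa^+$ many possible measures and hence there is a \emph{repeat point}: an $\alpha<\kappa^{++}$ with $\bigcap_{0<\beta<\alpha}\mathcal U(\beta)=\bigcap_{0<\beta<\kappa^{++}}\mathcal U(\beta)$. Replacing $\mathcal U$ by $\mathcal U\restriction\alpha$ does not change the forcing, and one then takes the ultrapower $k\colon V\to M$ by $\mathcal U(\alpha)$; below a suitable condition, $k(\mathbb P)$ factors as $\mathbb P\times\mathbb Q$ with $\mathbb Q$ sufficiently closed that an $M$-generic for it can be built from the $k$-images of conditions, so Silver's criterion lifts $k$ to $\tilde k\colon V[K]\to M[K][H]$ with critical point $\kappa$. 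Regularity (indeed measurability) of $\kappa$ in $V[K]$ falls out immediately. If you want to salvage your approach, you should at minimum extract the full $\len\mathcal U\geq\kappa^{++}$ (or at least a repeat point) from the hypothesis and then either carry out the lifting argument or give a genuine proof of the bounding claim; as written, the central step is missing.
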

\begin{proof}
By $\GCH$ and the strength of the elementary embedding, $j$, we have $\len \, \mathcal{U} \geq \kappa^{++}$, while $|V_{\kappa + 1}| = \kappa^{+}$. Therefore, there is a \emph{repeat point}, namely an ordinal $\alpha < \kappa^{++}$ such that \[\bigcap_{0 < \beta < \alpha} \mathcal{U}(\beta) = \bigcap_{0 < \beta < \kappa^{++}} \mathcal{U}(\beta).\] Let $\alpha$ be the first repeat point in the measure sequence derived from $j$.

Clearly, replacing $\mathcal{U}$ with $\mathcal{U}\restriction \alpha$ does not change the forcing. Let $k\colon V \to M$ be an elementary embedding generated by $\mathcal{U}(\alpha)$. Let us look at $k(\mathbb{P})$. Let:
\[p = \langle f_{-1}, u_0, A_0, F_0, f_0, \dots, u_n, A_n, F_n\rangle \in \mathbb{P}.\]
Let us extend the condition $k(p)$ by adding $\langle \mathcal{U}\restriction \alpha, A_n, F_n\rangle$ at the $n$-th coordinate, and let $q$ be the obtained condition. The Radin forcing below the condition $q$ is equivalent to a product $\mathbb{P} \times \mathbb{Q}$ (where $\mathbb{Q}$ consists of all the upper parts of the conditions in $\mathbb{P}$). Recall that $\mathbb{Q}$ is $\kappa^{+}$-weakly closed. 

Let us define an ultrafilter in $V[G]$ by $p \Vdash \dot{A} \in \mathcal{V}$ if and only if there is a direct extension $q^\star$ of the above $q$, where the lower part of $q^\star$ is $p$, and it forces $\check\kappa \in k(\dot{A})$. Since every pair of direct extensions are compatible, $\mathcal{V}$ is well defined. Moreover, using the Prikry Property and the genericity of $G$, for every $\dot{A}$ there is $p\in G$ such that $p \Vdash \dot{A} \in \mathcal{V}$ or $p \Vdash \check\kappa \setminus \dot{A} \in \mathcal{V}$. Let us show that $\mathcal{V}$ is $\kappa$-complete. 

Assume otherwise and let us work, for a moment, in $V$. Let $p$ be a condition that forces that $\langle \dot{A}_i \mid i < i_\star\rangle$ is a sequence of names of elements of $\mathcal{V}$ with intersection not in $\mathcal{V}$ and $i_\star < \kappa$. Let $q$ be as before. 
We conclude that there are direct extensions $q_i \leq^* q$ for each $i < i_\star$ such that $q_i \Vdash \check\kappa \in k(\dot{A}_i)$. 
Since the parts below $\kappa$ of those conditions are the same and they are all a direct extension of the same condition, there is a single condition $q^\star$ stronger than all of them. 
$q^\star \Vdash \forall i < i_\star \check\kappa \in k(A_i)$ and in particular, 
\[q^\star \Vdash \check\kappa \in k(\bigcap _{i < i_\star}\dot{A}_i) = \bigcap_{i < i_\star} k(A_i).\]
\end{proof}
Since $\kappa$ is measurable in the generic extension, it is in particular inaccessible there and thus $V_\kappa$ of the generic extension is a model of $\ZFC$.

There is a natural projection from a measure on measure sequences in $V_\kappa$ to a measure on $\kappa$ by taking each measure sequence $u$ to its first element $u(0)$. When saying that a subset of $\kappa$ is large relative to a measure on the measure sequences of $V_\kappa$ we mean that it is large relative to the corresponding projection.

Let us return now to the model that was obtained in the previous section.
\begin{theorem}
Let $\mathbb{P}$ be the Radin forcing for adding a club through $\kappa$, with interleaved collapses, collapsing $\rho_{i+1}$ to be of cardinality $\rho_i^+$ for any two successive Radin points. Let $A$ be the set obtained in Lemma \ref{below huge}. Assume that $A$ is $\mathcal{U}$-large relative to all relevant measures.

Then $\mathbb{P}$ forces $(\beta^{++}, \beta^+)\chang (\alpha^+, \alpha)$ for all $\alpha \leq \beta < \kappa$.
\end{theorem}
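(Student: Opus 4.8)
The plan is to derive each instance $(\beta^{++},\beta^+)\chang(\alpha^+,\alpha)$ from the reflection built into $A$ (Lemma~\ref{below huge} and its Magidor--Malitz refinement, propagated through $\mathbb P_\kappa$ by Claims~\ref{cc between all successors of regulars} and \ref{claim: mm reflection in successor of regulars}) together with the product structure of the Radin forcing. First I would read off the cardinal arithmetic of the extension: every bounded successor cardinal of $V[\mathbb P]$ has the form $\nu^+$ for a point $\nu$ on the generic club $C$, and two successive points $\rho<\sigma$ of $C$ have $\rho^+$ and $\sigma^+$ adjacent (the collapse interleaved between them being $\Col(\rho^+,\sigma)$). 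So a pair $\alpha\le\beta<\kappa$ yields successive points $\rho<\sigma$ of $C$ with $\beta^+=\rho^+$, $\beta^{++}=\sigma^+$, and a point $\rho'\le\rho$ of $C$ with $\alpha^+=(\rho')^+$ and $\alpha$ the predecessor cardinal of $(\rho')^+$. Since $A$ is $\mathcal U$-large, conditions may be thinned so that their measure-one components lie in $A$; hence $C\subseteq A$ and in particular $\rho',\rho,\sigma\in A$.

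Next I would fix a condition $p\in G$ whose stem pins down the measure sequences $u_{\rho'},u_\rho,u_\sigma$ and use the standard product decomposition $\mathbb P\restriction p\cong\mathbb P^{>u_\sigma}\restriction p_\uparrow\times\mathbb P^{u_\sigma}\restriction p_\downarrow$, iterated again at $u_\rho$ (and $u_{\rho'}$) inside the lower factor. The aim is to display the part of $\mathbb P$ active on $[\alpha,\beta^{++})$ in the $(\E(a,b)*\dot{\mathbb Q})*(\dot{\mathbb R}\times\dot{\E}(c,d))$-shape of Lemma~\ref{below huge}, with $a=\alpha$, $b=\rho'$, $d$ the first point of $C$ above $\sigma$, and $c$ a $V$-regular cardinal collapsed to $\beta^+$: the Levy collapses interleaved below $\rho'$ absorb into $\E(\alpha,\rho')$ by Lemma~\ref{folk}, $\dot{\mathbb Q}$ collects the $\rho'$-directed-closed collapsing between $\rho'$ and $\rho$, $\dot{\mathbb R}$ contains $\Col(\rho^+,\sigma)$ (which is $\rho'$-directed-closed because $\rho'<\rho^+$), and the guiding generics $\guidinggeneric_\bullet$ are absorbed harmlessly via Lemmas~\ref{lemma: guiding generics} and \ref{lemma: guiding generics reflection}. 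Then the Magidor--Malitz form of Lemma~\ref{below huge} gives $(\beta^{++},\beta^+)\chang_{Q^{<\omega}}(\alpha^+,\alpha)$ in the intermediate extension; Corollary~\ref{cor: small forcing force cc after half q2}, through Lemma~\ref{lemma: small forcing force cc after q2}, turns this into genuine $(\beta^{++},\beta^+)\chang(\alpha^+,\alpha)$ once the small $\beta^{++}$-c.c.\ collapse of size $\le\sigma$ is present; and finally Lemma~\ref{preserve closed} with $\kappa_1=\beta^{++}$ pushes Chang's Conjecture up to $V[G]$, since the factor below $\mathbb P^{>u_\sigma}$ is $\beta^{++}$-c.c.\ and $\mathbb P^{>u_\sigma}$ is a complete subforcing of a genuinely $\beta^{++}$-closed poset (absorb its own interleaved collapses by Lemma~\ref{folk} and use a term-space argument). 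As $\kappa$ stays inaccessible, reading the conclusion in $V_\kappa[G]\models\ZFC$ finishes the proof. The degenerate sub-case where $\alpha=\rho'$ is a limit point of $C$ I would handle separately, by taking submodels of $\beta^{++}$ meeting $\beta^+$ in sets of size $<\rho'$ of cofinally many cardinalities and amalgamating them.

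The hard part is the second step: fitting the relevant slice of Radin forcing into the rigid $(\E*\dot{\mathbb Q})*(\dot{\mathbb R}\times\dot{\E})$ form that Lemma~\ref{below huge} demands. The delicate points are the directed-closure bookkeeping for $\dot{\mathbb Q}$ and $\dot{\mathbb R}$ when there is nontrivial intermediate Radin structure strictly between $\rho'$ and $\rho$ --- one likely wants an induction on $\beta$ so that only plain Levy collapses appear in the window being matched, or a direct check that the $\le^\star$-closed suborders are enough; the absorption of the guiding generics into the collapsing factors; and the fact that Radin uses $\Col(\nu^+,\nu')$, genuinely collapsing $\nu'$, so that matching it to the regularity-preserving Easton collapses of Lemma~\ref{below huge} forces one to first peel off the correct closed part. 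The secondary difficulty is the tail $\mathbb P^{>u_\sigma}$: being Prikry-type rather than literally closed, the final preservation step genuinely needs the Magidor--Malitz strengthening of the earlier lemmas, not bare Chang reflection --- which is exactly why the preceding subsections built $\chang_{Q^{<\omega}}$-versions of everything.
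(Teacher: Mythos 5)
Your overall scaffolding (reading the cardinal structure off the Radin club, splitting $\mathbb P\restriction p$ into a lower c.c.\ part, a closed collapse, and a tail adding no small sets, then finishing with Lemma \ref{preserve closed}) matches the paper, but the step you yourself flag as ``the hard part'' is both not carried out and not the right move, and this is a genuine gap. You propose to realize a slice of the Radin forcing itself in the $(\E(a,b)*\dot{\mathbb Q})*(\dot{\mathbb R}\times\dot{\E}(c,d))$ template of Lemma \ref{below huge}. That lemma concerns forcings over the model $V$ in which the huge embedding and the set $A$ live, with $\dot{\mathbb Q}$, $\dot{\mathbb R}$ directed-closed and the outer factors genuine Easton collapses; the Radin forcing, by contrast, is defined over the extension by $j(\mathbb P_\kappa)$, its interleaved collapses are guided Levy collapses $\Col(\nu^+,\nu')$ (which destroy rather than preserve the regularity of their targets), and the part of $\mathbb P$ below $\alpha$ is Prikry-type and in no sense absorbable into $\E(\alpha,\rho')$. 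The obstacles you list --- intermediate Radin structure, guiding generics, the regularity-preservation mismatch --- are real, and ``an induction on $\beta$'' or ``a direct check'' is not developed enough to believe they can be overcome.

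The paper sidesteps all of this: the Magidor--Malitz reflection between the relevant $V$-cardinals is already present in the Radin \emph{ground model}, via Claim \ref{claim: mm reflection in successor of regulars} applied to $j(\mathbb P_\kappa)$ --- that is where Lemma \ref{below huge} and its $Q^{<\omega}$ strengthening are consumed, at the iteration stage rather than at the Radin stage. All that remains is to observe that the Radin forcing below $\xi$ (the point with $(\xi^+)^V=\beta^{++}$) factors as $\mathbb P_0\times\Col(\zeta^+,\xi)$ with $\mathbb P_0$ $\zeta^+$-c.c., of size $\le\zeta^+$, and $\alpha^+$-preserving, which is exactly the hypothesis of Corollary \ref{cor: small forcing force cc after half q2}; that corollary, through Lemma \ref{lemma: small forcing force cc after q2}, converts the ground-model $Q^2$-reflection into $(\beta^{++},\beta^+)\chang(\alpha^+,\alpha)$ after this c.c.\ forcing. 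You do cite these two results, but only as a postscript to the template-matching step, so the load-bearing part of your argument is the part that does not work. Your treatment of the tail also needs repair: $\mathbb P^{>u_\sigma}$ is not a complete suborder of a genuinely $\beta^{++}$-closed poset in the ground model, so Lemma \ref{preserve closed} does not apply to it as stated; the paper instead peels off the single closed factor $\Col(\xi^+,\rho)$ for Lemma \ref{preserve closed} and disposes of the remaining tail by the Prikry property (it adds no new subsets of $\xi^+$, hence no new algebras on $\beta^{++}$).
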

\begin{proof}
Let $p \in \mathbb P$ force that $\beta$ be a cardinal in the extension.  Assume $p$ is strong enough to decide three successive points $\zeta < \xi < \rho$ in the Radin club such that $p \Vdash \beta^+ = (\zeta^+)^V,\, \beta^{++} = (\xi^+)^V$, and $\beta^{+3} = (\rho^+)^V$.

The forcing $\mathbb{P}\restriction p$ splits into a product $\mathbb{Q}_0 \times \Col(\xi^+,\rho) \times \mathbb Q_1$, where $\mathbb{Q}_0$ is the Radin forcing below $\xi$, and $\mathbb Q_1$ adds no subsets of $\xi^+$.

Note that $\mathbb{Q}_0 = \mathbb{P}_0 \times \Col(\zeta^+, \xi)$, where $\mathbb P_0$ is $\zeta^+$-c.c., has size $\leq \zeta^+$, and preserves $\alpha^+$, which is the successor cardinal in $V$ to some member of the Radin club.  Corollary \ref{cor: small forcing force cc after half q2} implies that $(\beta^{++},\beta^+) \chang (\alpha^+,\alpha)$ holds after forcing with $\mathbb{P}_0 \times \Col(\zeta^+, \xi)$.

Since $\mathbb Q_0$ is $\xi^+$-c.c.\ and $\Col(\xi^+,\rho)$ is $\xi^+$-closed, Lemma \ref{preserve closed} implies that the instance of Chang's Conjecture holds after forcing with $\mathbb{Q}_0 \times \Col(\xi^+, \rho)$.  It continues to hold after forcing with $\mathbb Q_1$, since no new algebras on $\xi^+$ are added.
\end{proof}

In the above model, the instances of Chang's Conjecture of the form $(\mu^+, \mu)\chang (\nu^+, \nu)$ where $\mu$ is singular, always fail. Since every singular cardinal in the generic extension is inaccessible in the ground model, ${\Square}_\mu^\star$ holds there. Since we preserve its successor, it still holds in the generic extension. Any instance of Chang's Conjecture of the form $(\mu^+, \mu)\chang (\nu^+, \nu)$, where $\mu$ is singular, implies the failure of the weak square, $\Square_\mu^\star$, by \cite{ForemanMagidor1997}.

\section{Segments of Chang's Conjecture}\label{sec: segment cc}
In the previous section we dealt with obtaining Chang's Conjecture between all pairs of the form $(\mu^+, \mu)$ and $(\nu^+, \nu)$ where $\mu$ is a successor cardinal. The cases of $\mu$ singular cardinal, which were not covered in the previous section, are much harder.

Recall that any instance of Chang's Conjecture of the form $(\mu^+,\mu)\chang(\nu^+,\nu)$ where $\mu$ is singular, $\nu < \mu$ (in which we assume that the elementary submodel of cardinality $\nu^+$ contains $\nu$) implies the failure of the weak square $\square_{\mu}^\star$. Indeed, it implies that there are no good scales. Thus, the problem of getting, for example $(\mu^+,\mu)\chang((\cf \mu)^+, \cf \mu)$ globally requires us to get a failure of weak square at all singular cardinals. See \cite{DiagonalSCPrikry} for the best known consistency result towards this goal.

We want to attack a more modest problem. We will get all the instances of Chang's Conjecture which are compatible with \GCH, in a small segment of cardinals not covered by the previous section.

Let $\kappa$ be a huge cardinal, and let $j\colon V\to M$ be an elementary embedding witnessing it. Let $\delta = j(\kappa)$. 
\begin{lemma}\label{lem: universal laver function}
There is a function $\ell\colon \delta \to V_\delta$ with the following properties:
\begin{enumerate}
\item $j(\ell\restriction \kappa) = \ell$.
\item For every cardinal $\mu$, and $\gamma < \delta$, if $x\in V_\gamma$ and $\mu$ is $2^\gamma$-supercompact then there is an elementary embedding $k\colon V\to N$, $\crit k = \mu$, $k\image \gamma \in N$, $k(\ell)(\mu) = x$.
\item For every $\mu < \kappa$ non-measurable, $\ell(\mu) = \emptyset$.
\item For every $\mu < \kappa$, if $\mu$ is ${<}\delta$-supercomapct, $\ell(\mu) = \emptyset$.
\end{enumerate}
\end{lemma}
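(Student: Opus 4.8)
The plan is to build $\ell$ in two stages. Working first in $V$, fix a well-ordering $<^\ast$ of $V_\kappa$ (arranged so that $\langle x',\gamma'\rangle<^\ast\langle x,\gamma\rangle$ entails $\max(\gamma',\rank x')\le\max(\gamma,\rank x)$), and define $\ell_0\colon\kappa\to V_\kappa$ by recursion on $\mu<\kappa$: put $\ell_0(\mu)=\emptyset$ if $\mu$ is not measurable or is ${<}\delta$-supercompact; otherwise let $\ell_0(\mu)=x$, where $\langle x,\gamma\rangle$ is the $<^\ast$-least pair (if one exists, else $\ell_0(\mu)=\emptyset$) with $\gamma<\delta$, $x\in V_\gamma$, $\mu$ $|\gamma|$-supercompact, and such that there is no elementary $k\colon V\to N$ with $\crit k=\mu$, $k\image\gamma\in N$ and $k(\ell_0\restriction\mu)(\mu)=x$. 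The recursion is legitimate because $k(\ell_0\restriction\mu)(\mu)$ depends only on $k$ and on $\ell_0\restriction\mu$; moreover ``no such $k$ exists'' becomes first-order over $V$ once one quantifies over fine normal ultrafilters on $P_\mu\gamma$ and their ultrapowers, so both the definition of $\ell_0$ and the property we are after are first-order statements about $\ell_0$. Now set $\ell:=j(\ell_0)$. A huge embedding has $\delta$ inaccessible in $V$ and $V_\delta\subseteq M$, hence $j(V_\kappa)=V_\delta$, so $\ell\colon\delta\to V_\delta$; property (1) holds by construction, with $\ell\restriction\kappa=\ell_0$ since $\crit j=\kappa$. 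Properties (3) and (4) follow at once: for $\mu<\kappa$, $\ell(\mu)=\ell_0(\mu)=\emptyset$ by the first two clauses whenever $\mu$ is non-measurable or ${<}\delta$-supercompact.

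The work is in (2). I would first prove in $V$ the first-order statement: for all $\mu<\kappa$, all $\gamma<\delta$ and all $x\in V_\gamma$, if $\mu$ is $2^\gamma$-supercompact then some $k\colon V\to N$ has $\crit k=\mu$, $k\image\gamma\in N$ and $k(\ell_0)(\mu)=x$ (note $k(\ell_0)(\mu)=k(\ell_0\restriction\mu)(\mu)$). This is the standard Laver diagonalization. If it fails at some $\mu$, let $\langle x,\gamma\rangle$ be the $<^\ast$-least witnessing pair, so $\gamma$ and $\rank x$ are as small as possible and $\mu$ is $2^{\gamma'}$-supercompact for every smaller witnessing pair. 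Take a $2^\gamma$-supercompact $j'\colon V\to M'$ with $\crit j'=\mu$ and analyze $j'(\ell_0)(\mu)$ by elementarity and the recursion. Since $2^\gamma<\delta$ and $\delta$ is inaccessible, $j'(\delta)=\delta$, and $M'$ still sees $\mu$ as $|\gamma|$-supercompact but not as ${<}\delta$-supercompact, so the recursion in $M'$ is in its active branch at $\mu$ and $\langle x,\gamma\rangle$ is a candidate there. Every pair $<^\ast\langle x,\gamma\rangle$ is, by minimality, anticipated in $V$, and the witnessing $|\gamma'|$-supercompactness ultrafilter and its ultrapower of $\ell_0\restriction\mu$ lie low enough in the cumulative hierarchy to be computed correctly inside $M'$, so $M'$ sees all of them anticipated. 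On the other hand $\langle x,\gamma\rangle$ is not anticipated in $M'$, for an $M'$-internal anticipating ultrapower would, lying low, already witness anticipation in $V$. Hence the recursion yields $j'(\ell_0)(\mu)=x$. Factoring $j'$ through the ultrapower of $V$ by the fine normal measure $U$ on $P_\mu\gamma$ derived from $j'$, say $j'=e\circ k$ with $k=j_U$ and $\crit e>\gamma$, gives $k\colon V\to N$ with $\crit k=\mu$, $k\image\gamma=[\mathrm{id}]_U\in N$, and $e(k(\ell_0)(\mu))=j'(\ell_0)(\mu)=x$; as $\rank x<\gamma<\crit e$ and $e$ is injective, $k(\ell_0)(\mu)=x$. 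This contradicts the choice of $\langle x,\gamma\rangle$.

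Finally, lift from $\kappa$ to $\delta$. Applying the huge embedding $j$ to the statement just established produces, in $M$, the same assertion with $\kappa$ replaced by $\delta$ and $\delta$ by $j(\delta)$, now about $\ell=j(\ell_0)$. Restricting to $\gamma<\delta$ and using $V_\delta\subseteq M$ and $M^\delta\subseteq M$ --- which ensure that for $\mu,\gamma<\delta$ the models $M$ and $V$ agree on ``$\mu$ is $2^\gamma$-supercompact'', on the fine normal measures on $P_\mu\gamma$, and on the relevant ultrapowers of $\ell$ --- yields property (2) in $V$.

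The main obstacle I expect is part of the second paragraph: ensuring that the target $M'$ of the ``big'' embedding both computes the relevant supercompactness of $\mu$ correctly (so that it agrees with $V$ on which pairs are anticipated, and on the value of each anticipating ultrapower) and lands in the active branch of the recursion at $\mu$. This is exactly what the gap ``$2^\gamma$'' in the hypothesis of (2) buys --- $M'$ is then closed enough to see $|\gamma|$-supercompactness measures on $\mu$ yet still thin enough above $\gamma$ --- and it is where $\GCH$ (or the ambient cardinal arithmetic below $\delta$) is used to pin down the relevant sizes; the parallel check that everything can be phrased first-order so that $j$ applies is routine but needs care.
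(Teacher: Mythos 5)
Your construction is essentially the paper's: define $\ell\restriction\kappa$ on $V_\kappa$ by recursion via $<^\ast$-least non-anticipated pairs, push it up to $\delta$ with $j$ (using $V_\delta\subseteq M$ for item (1)), verify (2) below $\kappa$ by the standard Laver minimal-counterexample reflection, and transfer (2) up to $\delta$ by elementarity of $j$ together with the agreement of $V$ and $M$ on measures and ultrapower computations below $\delta$. The one substantive deviation is that you hard-code clause (4) into the recursion (``$\ell_0(\mu)=\emptyset$ if $\mu$ is ${<}\delta$-supercompact''), and this creates a genuine weak point in your proof of (2): your contradiction argument needs the recursion to be in its ``active branch'' at $\mu$ inside $M'$, i.e.\ it needs $M'\not\models$ ``$\mu$ is ${<}\delta$-supercompact,'' and you assert this without justification. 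It is not automatic: $M'$ is only required to be closed under $2^\gamma$-sequences, which neither guarantees that $M'$ fails to contain (possibly $M'$-internal, non-$V$-normal) witnesses to higher supercompactness of $\mu$, nor, when $\mu$ really is ${<}\delta$-supercompact in $V$, that $M'$ fails to see this. If $M'$ does think $\mu$ is ${<}\delta$-supercompact, the reflected recursion outputs $\emptyset$ rather than $x$ and your contradiction evaporates; worse, in that scenario it is not even clear that your $\ell_0$ satisfies (2) at such $\mu$ at all. The paper sidesteps this entirely: the recursion is defined purely by minimal counterexamples (with $\emptyset$ as the default when none exist), item (3) holds because a non-measurable $\mu$ carries no supercompactness measures so no candidate pairs arise, and item (4) is \emph{derived} from item (2) --- a ${<}\delta$-supercompact $\mu<\kappa$ is $2^\theta$-supercompact for every $\theta<\delta$ (as $\delta$ is inaccessible), so every candidate pair is anticipated and the default clause applies. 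You should adopt that ordering: drop the extra clause from the definition, run your (otherwise correct) reflection argument for (2), and then read off (3) and (4) as consequences.
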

\begin{proof}
We pick a universal Laver function $\ell\restriction \kappa$ on $V_\kappa$, using minimal counter examples (if there are counter examples). I.e.\ we pick $\ell(\rho) = x$ if $x\in H(\theta)$ and $\theta < \delta$ is the minimal cardinal for which there is no supercompact measure on $P_\rho \theta$, $\mathcal{U}$ such that $j_{\mathcal{U}}(\ell\restriction \rho) = x$. We pick $x$ to be the minimal such witness of failure, relative to some fixed well order. 
 
Apply $j$ on this function to obtain $\ell$. Since $V_\delta \subseteq M$, the first item holds. The second item holds below $\kappa$ by the usual proof for the Laver diamond: the minimal $x$ for which property (2) fails would be exactly the value of $j(\ell)$ at the point $\mu$ in any closed enough ultrapower. The third item holds by definition. The fourth item holds as for any fully supercompact cardinal (from the point of view of $V_\delta$), $\mu$, $\ell\restriction \mu$ is already a Laver diamond and therefore there is no counter example to pick. 
\end{proof}

Let $\ell$ be a Laver function for $V_\delta$ as in the conclusion of Lemma \ref{lem: universal laver function}.

We wish to make every $\alpha< \kappa$ which is ${<}\kappa$-supercompact indestructible under any $\alpha$-directed-closed forcing.  We do the usual Laver iteration $\mathbb P$ with respect to $\ell$.  We claim that if $G \subseteq j(\mathbb P)$ is generic, then $\kappa$ is still huge in $V[G]$.  Since $\kappa$ is ${<}\delta$-supercompact in $M$, $j(\mathbb P)/(G \cap \mathbb P)$ is $(2^\kappa)^+$-directed-closed in $V[G\cap \mathbb P]$.  Thus we may take a master condition and build an $M[G]$-generic filter for $j^2(\mathbb P)/G$.

Now let $A$ be the set obtained from Lemma \ref{below huge}. By reflection arguments, we may assume every cardinal in $A$ is ${<}\kappa$-supercompact. Let us pick such cardinals $\langle \mu_n \mid n < \omega\rangle$.

\begin{theorem}
There is $\rho < \mu_0$ such that:
\[\Col(\omega, \rho^{+\omega}) \ast \E(\rho^{+\omega+1}, \mu_0) \ast \E(\mu_0, \mu_1) \ast \cdots \ast \E(\mu_n, \mu_{n+1})\ast \cdots\]
forces:
\begin{enumerate}
\item For every $m < n < \omega$, $(\aleph_{n+1}, \aleph_n) \chang (\aleph_{m+1}, \aleph_m)$.
\item $(\aleph_{\omega+1}, \aleph_{\omega}) \chang (\aleph_1, \aleph_0)$.
\end{enumerate}
\end{theorem}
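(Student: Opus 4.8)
Write $\mathbb S$ for the displayed iteration and $\nu=\sup_n\mu_n$. The plan is to first read off the cardinal arithmetic of $V^{\mathbb S}$, then realize the ``regular'' instances in Part (1) from Lemma~\ref{below huge} plus the preservation lemmas, and finally handle the instances touching the bottom pair $(\aleph_1,\aleph_0)$ — the $m=0$ case of Part (1) and all of Part (2) — by a separate embedding argument built around the choice of $\rho$. For the arithmetic: $\Col(\omega,\rho^{+\omega})$ makes everything ${\le}\rho^{+\omega}$ countable, so $\aleph_1=\rho^{+\omega+1}$; since $\mu_0$ is Mahlo, $\E(\rho^{+\omega+1},\mu_0)$ is $\rho^{+\omega+1}$-closed and $\mu_0$-c.c.\ and collapses $(\rho^{+\omega+1},\mu_0)$ to $\rho^{+\omega+1}$, so $\aleph_2=\mu_0$; inductively each $\E(\mu_n,\mu_{n+1})$ gives $\aleph_{n+3}=\mu_{n+1}$, hence $\aleph_{n+2}=\mu_n$ for all $n<\omega$, $\aleph_\omega=\nu$, and $\aleph_{\omega+1}=\nu^+$. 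Two points need checking here: that the full-support tail $\prod_n\E(\mu_n,\mu_{n+1})$ preserves $\nu$ and $\nu^+$ (using that the $n$-th factor has size $<\mu_{n+1}$ and the tails become increasingly directed-closed — the main bookkeeping of this step), and that $\GCH$-type behaviour survives around each $\mu_n$, which holds since the iteration below $\mu_n$ has size $\le\mu_n$ and the tail from $\E(\mu_n,\mu_{n+1})$ on is $\mu_n$-closed.

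For Part (1) with $1\le m<n$, set $\alpha=\rho^{+\omega+1}$ if $m=1$ and $\alpha=\mu_{m-2}$ if $m\ge 2$, and $\beta=\mu_{m-1}$, $\gamma=\mu_{n-2}$, $\delta_0=\mu_{n-1}$; then $\beta,\delta_0\in A$ and $\alpha<\beta\le\gamma<\delta_0$ are regular. The tail of $\mathbb S$ from $\E(\alpha,\beta)$ onward is a $V$-definable $\alpha$-directed-closed forcing $\mathbb S_{\ge\alpha}$, while the initial part $\mathbb S_{<\alpha}$ has size $<\beta$ (hence is $\alpha^+$-c.c.), so $\mathbb S$ is forcing-equivalent to $\mathbb S_{<\alpha}\times\mathbb S_{\ge\alpha}$, and $\mathbb S_{\ge\alpha}=\mathbb M\ast\mathbb T$ where $\mathbb M=(\E(\alpha,\beta)\ast\dot{\mathbb Q})\ast(\dot{\mathbb R}\times\dot{\E}(\gamma,\delta_0))$ with $\dot{\mathbb Q}=\E(\beta,\mu_m)\ast\cdots\ast\E(\mu_{n-3},\mu_{n-2})$ and $\dot{\mathbb R}$ trivial — so hypotheses (1),(2) of Lemma~\ref{below huge} hold ($\dot{\mathbb Q}$ is $\beta$-directed-closed, of size $\le\gamma$, and $\gamma$-c.c.) — and $\mathbb T=\E(\delta_0,\mu_n)\ast\cdots$ is $\delta_0$-closed. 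Lemma~\ref{below huge} gives $(\gamma^+,|\gamma|)\chang(\alpha^+,\alpha)$ in $V^{\mathbb M}$; Lemma~\ref{preserve closed} (with $\kappa_1=\delta_0$ and $\mathbb T\lhd\mathbb T$) carries it to $V^{\mathbb S_{\ge\alpha}}$; all four cardinals involved are regular there, so plain $\chang$ upgrades to $\chang_\alpha$, which Lemma~\ref{preserve small} then preserves through the $\alpha^+$-c.c.\ forcing $\mathbb S_{<\alpha}$. Since in $V^{\mathbb S}$ one has $\alpha=\aleph_m$, $\alpha^+=\aleph_{m+1}$, $\gamma=\aleph_n$, $\gamma^+=\aleph_{n+1}$, this is exactly $(\aleph_{n+1},\aleph_n)\chang(\aleph_{m+1},\aleph_m)$.

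It remains to obtain $(\aleph_2,\aleph_1)\chang(\aleph_1,\aleph_0)$ and $(\aleph_{\omega+1},\aleph_\omega)\chang(\aleph_2,\aleph_1)$; by transitivity of $\chang$ together with the instances already established, these two yield all of $m=0$ in Part (1) and all of Part (2). Here Lemma~\ref{below huge} is unavailable, since the source cardinal below — $\rho^{+\omega}$, respectively $\aleph_\omega=\nu$ — is singular, so I would argue directly. Choose $\rho<\mu_0$ by reflection below the supercompact $\mu_0$; this is exactly where the universal Laver function of Lemma~\ref{lem: universal laver function} is used — it supplies, for $\mu_0$ (or for the relevant cardinal in $(\rho,\mu_0)$) together with the appropriate parameters, an elementary embedding $j$ guessing precisely the master-condition and guiding-generic data needed, and arranges that the segment of $\mathbb S$ above $\rho$ is the $j$-image of the corresponding segment of $\mathbb S$ above the critical point of $j$. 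One then lifts $j$ through $\mathbb S$ by constructing a master condition and an $M$-generic filter exactly as in the proof of Lemma~\ref{lemma: easton collapse give cc} and Claim~\ref{claim:preserving almost hugeness} (the continuity analysis of the master condition and the reflection of dense sets through the guiding generics, as in Lemma~\ref{lemma: guiding generics reflection}). Given $f$ on the relevant source cardinal in $V^{\mathbb S}$, the image $\tilde j``(\text{source structure})$ is a set in the target, closed under $\tilde j(f)$, of the correct cardinality, meeting the predicate in a set of the correct (smaller) cardinality; pulling this back by elementarity yields the desired Chang relation, and composing along the already-proved chain $(\aleph_{n+1},\aleph_n)\chang(\aleph_2,\aleph_1)$, $(\aleph_2,\aleph_1)\chang(\aleph_1,\aleph_0)$ finishes the remaining cases.

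The hard part will be this last step — pinning down the correct $\rho$ and verifying that the huge-type embedding lifts through all of $\mathbb S$. The most delicate point is the master condition for the $\omega$-length block $\prod_n\E(\mu_n,\mu_{n+1})$, which straddles the critical point of $j$: one must show that its union-type lower bound is a genuine Easton-support condition (the ``frozen coordinate'' continuity argument of Lemma~\ref{lemma: easton collapse give cc}) and that the transferred filter meets every dense subset of the target, for which the guiding-generic machinery and the fact that $\GCH$ holds high up are needed exactly as in the Radin-forcing analysis. A secondary technical obstacle, already flagged above, is the preservation of $\aleph_{\omega+1}=\nu^+$ by the full-support tail.
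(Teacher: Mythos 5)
Your cardinal arithmetic and your treatment of Part (1) for $1\le m<n$ are fine and match the paper's route (the paper simply cites Lemma~\ref{below huge} for these instances, and your decomposition $\mathbb S\cong\mathbb S_{<\alpha}\times\mathbb S_{\ge\alpha}$ with Lemmas~\ref{preserve closed} and~\ref{preserve small} is essentially Claim~\ref{cc between all successors of regulars} adapted to this iteration). The fatal problem is your plan for Part (2). You propose to obtain $(\aleph_{\omega+1},\aleph_\omega)\chang(\aleph_1,\aleph_0)$ by composing $(\aleph_{\omega+1},\aleph_\omega)\chang(\aleph_2,\aleph_1)$ with $(\aleph_2,\aleph_1)\chang(\aleph_1,\aleph_0)$. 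But the consistency of $(\aleph_{\omega+1},\aleph_\omega)\chang(\aleph_2,\aleph_1)$ is a well-known open problem --- the introduction of this very paper states it is ``completely open'' --- so no amount of master-condition and guiding-generic machinery will deliver it here; the sketch you give (lifting an embedding through all of $\mathbb S$ and taking $\tilde j\image(\text{source structure})$) does not engage with the actual obstruction, namely that a set of size $\aleph_2$ meeting $\aleph_\omega$ in a set of size $\aleph_1$ forces the existence of stationarily many bad points of cofinality $\omega_2$ on every scale. The paper instead proves the drop to $(\aleph_1,\aleph_0)$ \emph{directly}: rearrange the iteration as a product, force with the tail $\E(\mu_0,\mu_1)\times\cdots$ first (this is where the Laver preparation is used --- only to keep $\mu_0$ ${<}\kappa$-supercompact, not to guess master conditions), take $k:V\to M$ with critical point $\mu_0$ and $A=k\image\mu_0^{+\omega+1}\in M$, and observe that a putative counterexample $\dot f$ would have $\dot f\image A^{<\omega}$ of size $\aleph_1$ inside $k(\mu_0^{+\omega})=\bigcup_n k(\mu_0^{+n})$; since $\aleph_1$ is regular and uncountable, the image concentrates on a single $k(\mu_0^{+n})$, and an \Erdos--Rado argument at $\mu_0^{+n}$ gives a contradiction. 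This argument works precisely because the target second coordinate is $\aleph_0$ and has no analogue for target $(\aleph_2,\aleph_1)$.

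Two further points. First, your proposal omits the step that transfers the result from the product back to the iteration: the paper proves a projection from $(\Col(\omega,\rho^{+\omega})\ast\E(\rho^{+\omega+1},\mu_0))\times\prod_n\E(\mu_n,\mu_{n+1})$ onto the iteration (via termspace forcing, as in Lemma~\ref{easton projection}) and then uses $\mu_0$-distributivity of the quotient to pull the $\aleph_1$-sized witness down. Since the supercompactness argument requires forcing with the tail first, this step cannot be skipped. Second, for $(\aleph_2,\aleph_1)\chang(\aleph_1,\aleph_0)$ the paper does not run an embedding-lifting argument at all: after $\Col(\omega,\rho^{+\omega})$ the cardinal $\mu_0$ is still measurable, and the $\sigma$-closed, $\mu_0$-c.c.\ forcing $\E(\omega_1,\mu_0)$ yields an $\omega_2$-complete ideal on $\omega_2$ whose positive sets have a $\sigma$-closed dense subset, which gives strong Chang's Conjecture by Sakai's theorem; the remainder of the iteration is $\aleph_2$-closed and so preserves this. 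Your Part (1), $m=0$, cases do then follow by transitivity once this instance is in hand, so that portion of your plan is salvageable; Part (2) is not.
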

In the generic extension $\aleph_1 = \rho^{+\omega + 1}$ and $\aleph_{n + 2} = \mu_n$ for all $n  <\omega$.

We split the proof of this theorem into three parts:
\begin{lemma}
For every choice of $\rho$, $(\aleph_{2}, \aleph_1)\chang (\aleph_1, \aleph_0)$.
\end{lemma}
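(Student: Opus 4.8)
The plan is to peel off the highly closed tail of the iteration and reduce to the classical situation of a measurable cardinal collapsed to $\aleph_2$. Write the forcing as $\mathbb S_0 \ast \dot{\mathbb S}_1$ with $\mathbb S_0 = \Col(\omega,\rho^{+\omega}) \ast \E(\rho^{+\omega+1},\mu_0)$ and $\dot{\mathbb S}_1 = \dot\E(\mu_0,\mu_1)\ast\dot\E(\mu_1,\mu_2)\ast\cdots$, everything done over the Laver-prepared model $V[\mathbb P]$. In the $\mathbb S_0$-extension one has $\aleph_0=\omega$, $\aleph_1=\rho^{+\omega+1}$ (the first factor collapses everything up to $\rho^{+\omega}$), and $\aleph_2=\mu_0$ (the second factor is $\mu_0$-c.c., as $\mu_0$ is Mahlo, and collapses $(\rho^{+\omega+1},\mu_0)$ onto $\rho^{+\omega+1}$), while $\dot{\mathbb S}_1$ is $\mu_0$-closed, i.e.\ $\aleph_2$-closed. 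Since $\mathbb S_0$ is $\aleph_2$-c.c., Lemma~\ref{preserve closed} (with $\kappa_1=\mu_0$, $\kappa_0=\mu_1=\rho^{+\omega+1}$, $\mu_0=\omega$, realizing $\dot{\mathbb S}_1$ via an $\aleph_2$-closed term-space poset over $V[\mathbb P]$, exactly as it is applied to closed tails of iterations elsewhere) reduces the lemma to proving $(\aleph_2,\aleph_1)\chang(\aleph_1,\aleph_0)$ after forcing with $\mathbb S_0$. Also, since $\mu_0$ is supercompact in $V[\mathbb P]$ and $\Col(\omega,\rho^{+\omega})$ has size $<\mu_0$, by L\'evy--Solovay $\mu_0$ remains supercompact, in particular measurable, in $V_1 := V[\mathbb P][\Col(\omega,\rho^{+\omega})]$. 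Set $V_2 = V_1[G]$ with $G\subseteq \mathbb G := \E(\rho^{+\omega+1},\mu_0)$ generic; note $\mathbb G$ is $\rho^{+\omega+1}$-closed, $\mu_0$-c.c., of size $\mu_0$, and $\mathbb G\subseteq V^{V_1}_{\mu_0}$.

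At this point I would run the standard argument (Silver's, showing that L\'evy-collapsing a measurable cardinal onto $\aleph_2$ yields $(\aleph_2,\aleph_1)\chang(\aleph_1,\aleph_0)$), with $\rho^{+\omega+1}$ playing the role of $\omega_1$ and $\E(\rho^{+\omega+1},\mu_0)$ the role of the L\'evy collapse. Given $f:\mu_0^{<\omega}\to\mu_0$ in $V_2$, fix a $\mathbb G$-name $\dot f$; by the $\mu_0$-c.c.\ choose, in $V_1$, for each $\vec a\in\mu_0^{<\omega}$ a maximal antichain deciding $\dot f(\vec a)$ and the decided values, and (using that $\mu_0$ is inaccessible in $V_1$) package all of this together with $\mathbb G$ and Skolem functions into a structure $\mathfrak B\in V_1$ in a countable language whose universe is (a code of) $V^{V_1}_{\mu_0}$. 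Because $\mu_0$ is measurable in $V_1$ it satisfies $\mu_0 \to (\rho^{+\omega+1})^{<\omega}$ together with the refinements yielding \emph{good} (remarkable) indiscernibles; so there is $I\subseteq\mu_0$ with $\otp(I)=\rho^{+\omega+1}$ and $\min I>\rho^{+\omega+1}$ which is a good set of indiscernibles for $\mathfrak B$, meaning $\sk^{\mathfrak B}(I)\cap\gamma=\sk^{\mathfrak B}(I\cap\gamma)\cap\gamma$ for all $\gamma\in I$.

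Let $N := \sk^{\mathfrak B}(I)\prec\mathfrak B$, and let $X\subseteq\mu_0$ be the set of ordinals of $N$. Then $|X|=|I|=\rho^{+\omega+1}$; writing $\gamma_0=\min I$, goodness gives $X\cap\rho^{+\omega+1}\subseteq N\cap\gamma_0=\sk^{\mathfrak B}(\varnothing)\cap\gamma_0$, which is countable in $V_1$ (hence in $V_2$) and contains $\omega$, so $|X\cap\aleph_1^{V_2}|=\aleph_0$. For closure under $f$: by elementarity $\mathbb G\cap N$ is a complete suborder of $\mathbb G$ (a maximal antichain of $\mathbb G\cap N$ lying in $N$ is, by elementarity into $\mathfrak B=V_{\mu_0}^{V_1}$, maximal in $\mathbb G$), so $G$ induces an $N$-generic $g$ for $\mathbb G\cap N$, $N[g]$ is an elementary substructure of $V_{\mu_0}^{V_2}$ (Silver's lifting criterion, since $g\subseteq G$), and $N[g]$ has the same ordinals as $N$; hence for $\vec a\in X^{<\omega}$ the relevant antichain lies in $N$, the condition in it met by $g$ lies in $N$, and the value it forces for $\dot f(\vec a)$ lies in $N$ — i.e.\ $f(\vec a)\in X$. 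Thus $X$ witnesses $(\aleph_2,\aleph_1)\chang(\aleph_1,\aleph_0)$ in $V_2$.

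The step I expect to carry the real weight is the last one: arguing that the Skolem hull manufactured inside $V_1$ remains (elementary, hence) closed under the generically added $f$ in $V_2$. This is precisely the substance of Silver's theorem, and it forces the auxiliary structure $\mathfrak B$ to carry $\mathbb G$ and the deciding antichains rather than $f$ itself, so that $\mathbb G\cap N$ completely embeds into $\mathbb G$ and $G$ restricts to an $N$-generic; I would present it via the transitive collapse of $N$ and Silver's criterion, which is the cleanest way to handle the book-keeping. The only other point needing care is the availability of \emph{good} indiscernibles of the prescribed order type $\rho^{+\omega+1}$ with large minimum for the structure $\mathfrak B$ — this is exactly where measurability of $\mu_0$ (inherited from supercompactness through the small collapse) is used, and it is remarkability, not mere indiscernibility, that pins $X\cap\aleph_1$ down to being countable.
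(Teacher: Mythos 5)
Your reduction of the problem to the two-step iteration $\Col(\omega,\rho^{+\omega}) \ast \E(\rho^{+\omega+1},\mu_0)$ via Lemma~\ref{preserve closed} is fine and matches what is implicitly needed. The core argument, however, does not work, and the failure is not a repairable bookkeeping issue. The decisive problem is that you produce the witness $X = N \cap \mu_0$ as a \emph{ground-model} set, and no ground-model set can witness $(\aleph_2,\aleph_1)\chang(\aleph_1,\aleph_0)$ in this extension. Indeed, let $\alpha \in (\rho^{+\omega+1},\mu_0)$ be inaccessible and let $h_\alpha : \rho^{+\omega+1} \to \alpha$ be the generic surjection added at coordinate $\alpha$ of $\E(\rho^{+\omega+1},\mu_0)$; take $f$ to extend $h_\alpha$. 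Any Chang witness $X$ has $|X|=\aleph_1$ and $X\cap\aleph_1^{V_2}$ countable, so $|X\cap\alpha|\leq\aleph_1<\alpha$; but for any ground-model sets $c \in [\rho^{+\omega+1}]^{\aleph_0}$ and $Y\in[\alpha]^{\leq\aleph_1}$ it is dense to force $h_\alpha(\xi)\notin Y$ for some $\xi\in c$. Hence $X$ cannot be closed under $f$. The specific step where this surfaces in your write-up is the claim that $\mathbb G\cap N \lessdot \mathbb G$ and that $G$ restricts to an $N$-generic filter. That claim is false: for $\alpha\in N$ inaccessible, the set $\{q_\eta : \eta\in N\cap\alpha\}$, where $q_\eta$ is the condition asserting $h_\alpha(0)=\eta$, is a maximal antichain of $\mathbb G\cap N$ (any $r\in N$ deciding $h_\alpha(0)$ decides it to be an ordinal in $N$) but is obviously not predense in $\mathbb G$, and generically $G$ meets the full antichain $\{q_\eta:\eta<\alpha\}\in N$ at a condition outside $N$. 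Elementarity of $N$ in $\mathfrak B$ gives that antichains \emph{belonging to} $N$ are maximal in $\mathbb G$, but gives no control over \emph{where} $G$ meets them; since $N\cap\aleph_1^{V_2}$ is countable by design, $N$ is nowhere near containing its antichains as subsets. This is exactly the master-condition problem discussed in the paper before Lemma~\ref{lemma: easton collapse give cc}; Silver's theorem from an $\omega_1$-\Erdos{} cardinal is true, but its proof must build the witness in the extension and is considerably more delicate than a ground-model Skolem hull.

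The paper's own proof avoids all of this: after $\Col(\omega,\rho^{+\omega})$ the cardinal $\mu_0$ is still measurable (\Levy--Solovay), and the $\mu_0$-c.c., $\sigma$-closed forcing $\E(\omega_1,\mu_0)$ then yields a normal $\omega_2$-complete ideal on $\omega_2$ whose positive sets contain a $\sigma$-closed dense subset (the standard duality argument using the $\sigma$-closed quotient $j(\E(\omega_1,\mu_0))/\E(\omega_1,\mu_0)$). By Sakai's theorem this implies the strong Chang conjecture, hence $(\aleph_2,\aleph_1)\chang(\aleph_1,\aleph_0)$. If you want to keep an indiscernibles-style argument, you would need either the Silver collapse (designed so that hulls admit master conditions) or the full machinery of Silver's original proof; as written, your argument does not establish the lemma.
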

\begin{proof}

After forcing with $\Col(\omega, \rho^{+\omega})$, $\mu_0$ is still measurable. Since the forcing $\E(\omega_1, \mu_0)$ is $\mu_0$-c.c.\ and $\sigma$-closed, after forcing with it there is an $\omega_2$-complete ideal on $\omega_2$, in which the positive sets have a dense subset which is $\sigma$-closed. In particular, the Strong Chang conjecture holds (see, for example, \cite{Sakai2005}[Theorem 1.1]).
\end{proof}

\begin{lemma}
For every choice of $\rho$, for every $0 < m < n < \omega$, $(\aleph_{n + 1}, \aleph_n)\chang (\aleph_{m + 1}, \aleph_m)$.
\end{lemma}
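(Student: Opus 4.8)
The plan is, for each pair $1 \le m < n < \omega$, to isolate a ``middle segment'' of the iteration that by Lemma~\ref{below huge} already forces the desired Chang relation, and then to absorb the initial and final segments using the preservation lemmas of Section~\ref{sec: preliminaries}. First I would record the cardinal arithmetic of the generic extension: $\aleph_1 = \rho^{+\omega+1}$ and $\aleph_{k+2} = \mu_k$ for $k < \omega$. Writing $\alpha = \aleph_m$ (which is $\mu_{m-2}$ when $m \ge 2$ and $\rho^{+\omega+1}$ when $m = 1$; in either case $\alpha$ is regular in the ground model $V$), $\beta = \aleph_{m+1}$, $\gamma = \aleph_n$ and $\delta = \aleph_{n+1}$, the goal $(\aleph_{n+1},\aleph_n)\chang(\aleph_{m+1},\aleph_m)$ is the relation $(\delta,\gamma)\chang(\beta,\alpha)$; one has $\alpha < \beta \le \gamma < \delta$ with $\beta,\delta \in A$, and the iteration factors as $\mathbb P = \mathbb P_{<m} \ast \dot{\mathbb M} \ast \dot{\mathbb T}$, where $\mathbb P_{<m}$ is the part strictly below $\E(\alpha,\beta)$, $\mathbb M = \E(\alpha,\beta) \ast \dot{\mathbb Q} \ast \dot{\E}(\gamma,\delta)$ with $\dot{\mathbb Q}$ the Easton-collapse iteration from $\beta$ up to $\gamma$ (trivial when $m = n-1$, i.e.\ $\beta = \gamma$), and $\dot{\mathbb T} = \dot{\E}(\delta,\mu_n)\ast\dot{\E}(\mu_n,\mu_{n+1})\ast\cdots$ is the tail above $\delta$.

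The key point is that $\mathbb M$ meets the hypotheses of Lemma~\ref{below huge}: $\dot{\mathbb Q}$ is forced to be $\beta$-directed-closed, of size $\le \gamma$ under \GCH, and (being $\gamma$-c.c., since its last factor is) to preserve the regularity of $\gamma$. Taking $\dot{\mathbb R}$ trivial, Lemma~\ref{below huge} gives that $\mathbb M$ forces $(\gamma^+,|\gamma|)\chang(\alpha^+,\alpha)$. Since in the $\mathbb M$-extension $\alpha^+ = \beta$, $\gamma^+ = \delta$ and $|\gamma| = \gamma$ (no factor of $\mathbb M$ collapses $\gamma$), this is precisely $(\delta,\gamma)\chang(\beta,\alpha)$. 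Furthermore \GCH\ holds in that extension and $\alpha,\beta,\gamma,\delta$ are all regular there, so by the remarks following Lemma~\ref{preserve small} we may strengthen the relation to $(\delta,\gamma)\chang_\alpha(\beta,\alpha)$.

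It remains to carry this relation through $\mathbb P_{<m}$ and $\dot{\mathbb T}$. The tail is handled by closure: $\dot{\mathbb T}$ is $\delta$-directed-closed, so by the case of Lemma~\ref{preserve closed} with trivial first factor (Foreman's theorem) the relation $(\delta,\gamma)\chang(\beta,\alpha)$ survives forcing with $\dot{\mathbb T}$. For the initial segment, $|\mathbb P_{<m}| \le \alpha$, hence $\mathbb P_{<m}$ is $\alpha^+$-c.c.\ and by Lemma~\ref{preserve small} preserves $(\delta,\gamma)\chang_\alpha(\beta,\alpha)$. The only genuine wrinkle is that in the iteration $\dot{\mathbb M}$ is computed over $V^{\mathbb P_{<m}}$ rather than over $V$, so Lemma~\ref{below huge} does not apply to it verbatim. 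Here I would use the absorption machinery of Section~\ref{sec: preliminaries}: since $\mathbb P_{<m}$ is small relative to the $\alpha$-directed-closure of $\E(\alpha,\beta)$, one checks with Lemma~\ref{folk} and the termspace projection that $\mathbb P_{<m}\ast\dot{\mathbb M}$ is absorbed by the product of $\mathbb P_{<m}$ with $\mathbb M$ as computed in $V$; the conclusion of Lemma~\ref{below huge} over $V$, together with the $\chang_\alpha$ strengthening and Lemma~\ref{preserve small}, then gives $(\delta,\gamma)\chang(\beta,\alpha)$ in the $\mathbb P_{<m}\ast\dot{\mathbb M}$-extension. Alternatively, since $\mathbb P_{<m}$ is small it preserves the hugeness of $\kappa$, so one may re-run the reflection proof of Lemma~\ref{below huge} over $V^{\mathbb P_{<m}}$ and apply it to $\dot{\mathbb M}$ directly, noting that $\alpha$ need only be regular, and $\beta,\delta$ inaccessible, in that model. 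Combining the three steps, $(\delta,\gamma)\chang(\beta,\alpha)$ holds in $V^{\mathbb P}$, which is the assertion.

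I expect this last step to be the main obstacle: reconciling the Easton collapses as computed along the iteration with their ground-model counterparts, and arranging the preservation lemmas in the right order so that the $\alpha^+$-c.c.\ part acts after the middle collapsing forcing while the $\delta$-closed tail acts last. This bookkeeping is entirely in the spirit of Claim~\ref{cc between all successors of regulars} and the arguments of the previous sections, but needs to be carried out with care.
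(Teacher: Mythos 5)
Your proposal is correct and takes essentially the same route as the paper, whose entire proof of this lemma is the single sentence ``This is a special case of Lemma~\ref{below huge}'': your decomposition into a small initial segment, a middle block of the form $(\E(\alpha,\beta)\ast\dot{\mathbb Q})\ast(\dot{\mathbb R}\times\dot{\E}(\gamma,\delta))$ handled by Lemma~\ref{below huge}, and a closed tail handled by Lemma~\ref{preserve closed}, with Lemma~\ref{preserve small} absorbing the $\alpha^{+}$-c.c.\ initial part, is exactly the bookkeeping the paper suppresses here and spells out only in the analogous Claim~\ref{cc between all successors of regulars}. The $V$-versus-$V^{\mathbb P_{<m}}$ issue you flag at the end is likewise left implicit in the paper, which applies Lemma~\ref{below huge} to the quotient over the initial segment without further comment.
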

\begin{proof}
This is a special case of Lemma \ref{below huge}.
\end{proof}

\begin{lemma}
There is a choice of $\rho$ for which $(\aleph_{\omega+1}, \aleph_{\omega}) \chang (\aleph_1, \aleph_0)$.
\end{lemma}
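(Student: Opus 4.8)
The plan is to run a version of the Levinski--Magidor--Shelah construction of a model of $(\aleph_{\omega+1},\aleph_\omega)\chang(\aleph_1,\aleph_0)$ (cf.\ \cite{LevinskiMagidorShelah1990}), using here the indestructibly ${<}\kappa$-supercompact cardinals $\mu_n$ and the hugeness of $\kappa$. Write $\mathbb R(\rho)=\Col(\omega,\rho^{+\omega})\ast\E(\rho^{+\omega+1},\mu_0)\ast\E(\mu_0,\mu_1)\ast\cdots$, a full-support iteration of the (each $\mu_n$-directed-closed, of size ${<}\mu_{n+1}$) forcings $\E(\mu_n,\mu_{n+1})$ after the two initial collapses, and let $H$ be $\mathbb R(\rho)$-generic over $V[G]$. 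The first task is to confirm the cardinal arithmetic asserted after the statement: the opening collapse makes $\rho^{+\omega+1}$ into $\aleph_1$, the $n$-th Easton collapse makes $\mu_n$ into $\aleph_{n+2}$, and the tail iteration preserves $\mu_\omega:=\sup_n\mu_n$ and $\mu_\omega^+$ (all new bounded subsets of $\mu_\omega$ appear at finite stages, by directed-closure, and $\mu_\omega^+$ is not collapsed, by the standard analysis of such iterated collapses). Hence in $V[G\ast H]$ one has $\aleph_\omega=\mu_\omega$ and $\aleph_{\omega+1}=\mu_\omega^+$, and it remains to show that for every $f\colon(\mu_\omega^+)^{{<}\omega}\to\mu_\omega$ in $V[G\ast H]$ there is $X\subseteq\mu_\omega^+$ closed under $f$ with $|X|=\aleph_1$ and $|X\cap\mu_\omega|=\aleph_0$.

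For the choice of $\rho$: since the set $A$ of Lemma~\ref{below huge} may be taken to reflect to itself below each of its elements, there is a ${<}\kappa$-supercompact $\rho\in A\cap\mu_0$, which by the Laver preparation of Lemma~\ref{lem: universal laver function} is, in $V[G]$, indestructibly $\lambda$-supercompact for every $\lambda<\kappa$ (in particular for $\lambda=\mu_\omega^+$); the same holds, coherently, of an $\omega$-sequence of ${<}\kappa$-supercompacts cofinal in $\mu_\omega$, which one extracts from the $\mu_n$. The essential point about this $\rho$ is that $\rho<\rho^{+\omega}$, so $\Col(\omega,\rho^{+\omega})$ collapses $\rho$ to be countable in $V[G\ast H]$. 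One then lifts the relevant supercompactness embeddings through $\mathbb R(\rho)$: the part of $\mathbb R(\rho)$ below a given critical point consists of conditions lying in the corresponding rank-initial segment and is therefore fixed pointwise, while the tail from that point on is directed-closed, so its pointwise image has a lower bound by the directed-closure of the target model (the non-closed opening collapse is crossed by passing to a further generic extension, then the closed remainder by a master condition). Composing the lifted embeddings and passing to a direct limit, one obtains an elementary $e\colon\bar N\to H(\theta)^{V[G\ast H]}$ in which the preimage $e^{-1}(\mu_\omega)$ is countable; the Chang set $X$ is then the image under $e$ of an $\aleph_1$-sized piece of $\bar N$, so that $f$-closure of $X$ follows by elementarity, $|X|=\aleph_1$, and $|X\cap\mu_\omega|=\aleph_0$ precisely because $e^{-1}(\mu_\omega)$ is countable. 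The hugeness of $\kappa$ enters exactly here: it is what makes available, via the embedding $j$, the reflection guaranteeing that such a $\rho$, and such a coherent configuration of embeddings, can be found below $\mu_0$.

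The delicate part, which I expect to be the main obstacle, is making the last paragraph precise — getting the preimage $e^{-1}(\mu_\omega)$ to be genuinely countable, which forces one to control, simultaneously, the convergence of the critical points to $\mu_\omega$, the behaviour of the $\omega$-iteration $\E(\mu_0,\mu_1)\ast\cdots$, and the way the opening collapse $\Col(\omega,\rho^{+\omega})$ interacts with both; this is where the singular cardinal $\aleph_\omega$ has to be tamed. The surrounding technicalities — the preservation of $\mu_\omega$ and $\mu_\omega^+$ through the tail, the lifting across the non-$\sigma$-closed collapse, and verifying the reflection off $\kappa$ that produces $\rho$ — are routine but must be carried out. (Where the already-established local instances $(\aleph_{n+1},\aleph_n)\chang(\aleph_{m+1},\aleph_m)$ are needed in the course of the argument, they remain true in the relevant intermediate extensions by Corollary~\ref{cor: small forcing force cc after half q2} and Lemma~\ref{preserve closed}, just as in the proofs of the two preceding lemmas.)
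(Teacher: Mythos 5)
Your proposal does not actually contain the proof: the step you yourself flag as ``the main obstacle'' --- producing the elementary $e\colon\bar N\to H(\theta)^{V[G\ast H]}$ with $e^{-1}(\mu_\omega)$ countable --- is precisely the content of the lemma, and the mechanism you propose for it does not have the right shape. A submodel of size $\aleph_1$ whose trace on $\aleph_\omega$ is countable is a hugeness-type configuration; it cannot be assembled by composing lifted supercompactness embeddings whose critical points sit below, or converge to, $\mu_\omega$, since for any such embedding $e$ the set $e\image\mu_\omega$ has ground-model cardinality $\mu_\omega$ and so is uncountable in the final model (the $\mu_n$ are not collapsed --- they become the $\aleph_{n+2}$'s). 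Relatedly, your remark that ``the essential point about this $\rho$ is that $\rho<\rho^{+\omega}$'' cannot be right: that holds for every $\rho$, whereas the preceding two lemmas are stated ``for every choice of $\rho$'' and this one deliberately is not. The entire content of the lemma is the existence of a \emph{suitable} $\rho$, and your proposal never isolates what property of $\rho$ is being used.

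The paper's argument is quite different and you should compare. One first replaces the iteration by the corresponding product (the transfer back to the iteration is via a projection as in Lemma~\ref{easton projection} together with distributivity of the tail), forces with the part of the product above $\mu_0$ first, and uses the Laver indestructibility to keep $\mu_0$ itself $\mu_0^{+\omega+1}$-supercompact there. One then argues by contradiction: if every $\rho<\mu_0$ admitted a name $\dot f_\rho$ for a counterexample, apply an embedding $k$ with critical point $\mu_0$ (not some $\rho$ below it!), set $A=k\image\mu_0^{+\omega+1}$ and $\dot f=k(\langle\dot f_\rho\rangle)(\mu_0)$; since $\dot f\image A^{<\omega}$ is forced to be an uncountable subset of $\bigcup_n k(\mu_0^{+n})$, some $n$ works, and a decreasing sequence of conditions plus reflection yields a coloring of $[\mu_0^{+n+1}]^2$ to which \Erdos--Rado applies, producing an increasing $(\mu_0^{+n}+1)$-sequence of ordinals below $\mu_0^{+n}$ --- a contradiction. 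The suitable $\rho$ thus emerges from a pigeonhole over all $\rho<\mu_0=\crit(k)$, exactly as in Theorem~\ref{thm: cc at single point}; no lifting of embeddings with small critical points, and no direct construction of the Chang submodel, is needed.
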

\begin{proof}
Let us show first that there is $\rho$ such that after forcing with:
\[(\Col(\omega, \rho^{+\omega}) \ast \E(\rho^{+\omega+1}, \mu_0)) \times (\E(\mu_0, \mu_1) \times \cdots \times \E(\mu_n, \mu_{n+1})\times \cdots) \]
Chang's conjecture $(\aleph_{\omega+1}, \aleph_{\omega}) \chang (\aleph_1, \aleph_0)$, holds.

Replace the order of the product and force with the second component first. By the indestructibility of $\mu_0$, $\mu_0$ remains ${<}\kappa$-supercompact after this forcing.
Assume that for every $\rho < \mu_0$ there is a name for a function $\dot{f}_\rho \colon (\mu_0^{+\omega+1})^{<\omega} \to \mu_0^{+\omega}$ witnessing the failure of Chang's conjecture in the generic extension.

Let $k\colon V \to M$ be an elementary embedding with critical point $\mu_0$ and $A = k\image \mu_0^{+\omega+1} \in M$.  Let $\dot f = k(\langle \dot f_\rho : \rho < \mu_0 \rangle)(\mu_0)$.  By our assumption, $\Vdash |\dot{f} \image A^{<\omega}| = \aleph_1$. Since the image of $\dot{f}$ is contained in $k(\mu_0^{+\omega})$, there is an integer $n >0$ and a condition $p \in  \Col(\omega, \mu_0^{+\omega}) \ast \E(\mu_0^{+\omega+1}, k(\mu_0))$ such that $p \Vdash |\dot{f} \image A^{<\omega} \cap k(\mu_0^{+n})| = \aleph_1$.

Let us find a sequence of decreasing conditions $p_\alpha$ below $p$ and a sequence of sets $a_\alpha \in (\mu_0^{+\omega+1})^{{<}\omega}$ such that $p_\beta \Vdash \dot f(k(a_\alpha)) < \dot f(k(a_\beta)) < k(\mu_0^{+n})$ for every $\alpha < \beta < \mu_0^{+\omega+1}$. We find this sequence in the same way as we did in Theorem \ref{thm: cc at single point}. Namely, let $\langle \dot{x}_i \mid i < \mu_0^{+\omega+1}\rangle$ be a sequence of names for elements in $A^{<\omega}$ such that $p \Vdash \dot{f}(\dot{x}_\alpha) < \dot{f}(\dot{x}_\beta) < k(\mu_0^{+n})$ for every $\alpha < \beta$. Let us pick for every $\alpha < \mu_0^{+\omega + 1}$ a condition $p_\alpha = \langle r_\alpha, \dot{q}_\alpha\rangle$ below $p$ such that:
\begin{enumerate}
\item For some $a_i \in (\mu_0^{+\omega+1})^{{<}\omega}$, $p_\alpha \Vdash \dot{x}_\alpha = k(\check a_\alpha)$.
\item For every $\beta < \alpha$, $r_0 \Vdash \dot{q}_\alpha \leq \dot{q}_\beta$.
\end{enumerate}

Given the partial sequence $\langle r_\alpha, \dot q_\alpha, a_\alpha : \alpha<\beta \rangle$ satisfying the above conditions, we let $\dot q$ be a name for a lower bound to $\langle \dot q_\alpha : \alpha < \beta \rangle$.  Then we pick $\langle r_\beta, \dot q' \rangle \leq \langle r_0, \dot q \rangle$ and $a_\beta$ such that $\langle r_\beta, \dot q' \rangle \Vdash \dot x_\beta = k(\check a_\beta)$.  Then let $\dot q_\beta$ be such that $r_\beta \Vdash \dot q_\beta = \dot q'$ and $r' \Vdash \dot q_\beta = \dot q$ for all $r' \perp r_\beta$.
By the regularity of $\mu_0^{+\omega + 1}$, there is a fixed condition $r_\star$ and a cofinal set of ordinals $\alpha < \mu_0^{+\omega+1}$ such that $r_\alpha = r_\star$. Without loss of generality, for every $\alpha$, $r_\alpha = r_\star$.

By elementarity, for every $\alpha < \beta < \mu_0^{+\omega + 1}$ there is $\rho < \mu_0$ and a condition $q \in \Col(\omega, \rho^{+\omega}) * \mathbb E(\rho^{+\omega+1}, \mu_0)$ that forces $\dot f_\rho(a_\alpha) < \dot f_\rho(a_\beta) < \mu_0^{+n}$. Applying the \Erdos-Rado theorem on the first $\mu_0^{+n + 1}$ elements in this sequence, we obtain a sequence of ordinals $I$ of order type $\mu_0^{+n} + 1$ and a single $\rho_\star < \mu_0$, $q_\star \in \Col(\omega, \rho_\star^{+\omega}) * \mathbb E(\rho_\star^{+\omega+1}, \mu_0)$ such that for every $\alpha < \beta$ in $I$, $q_\star \Vdash \dot f_{\rho_\star}(a_\alpha) < \dot f_{\rho_\star}(a_\beta) < \mu_0^{+n}$. This is a contradiction, since it is impossible to get an increasing sequence of ordinals of length $\mu_0^{+n}+1$ below $\mu_0^{+n}$.

Thus, there is $\rho < \mu_0$ such that the product forces $(\aleph_{\omega+1}, \aleph_{\omega}) \chang (\aleph_1, \aleph_0)$.  Let us show that in this case the iteration does the same.

\begin{claim}
There is a projection
\begin{align*}
\pi & : (\Col(\omega, \rho^{+\omega}) \ast \E(\rho^{+\omega+1}, \mu_0)) \times \E(\mu_0, \mu_1) \times \cdots \times \E(\mu_n, \mu_{n+1})\times \cdots  \\
 & \to \Col(\omega, \rho^{+\omega}) \ast \E(\rho^{+\omega+1}, \mu_0) \ast \E(\mu_0, \mu_1) \ast \cdots \ast \E(\mu_n, \mu_{n+1})\ast \cdots
\end{align*}
\end{claim}
\begin{proof}
Let $\mathbb P = \Col(\omega, \rho^{+\omega}) \ast \E(\rho^{+\omega+1}, \mu_0)$. The argument for Lemma \ref{easton projection} shows the following:  For each $n < \omega$, there is a map $$\sigma_n : \E(\mu_{n},\mu_{n+1}) \to T(\mathbb P \ast \E(\mu_0, \mu_1) \ast \cdots \ast \E(\mu_{n-1}, \mu_{n}),  \E(\mu_{n},\mu_{n+1}))$$ such that $\langle p,q \rangle \mapsto \langle p, \sigma_n(q) \rangle$ is a projection from $(\mathbb P \ast \E(\mu_0, \mu_1) \ast \cdots \ast \E(\mu_{n-1}, \mu_{n})) \times \E(\mu_n,\mu_{n+1})$ to $(\mathbb P \ast \E(\mu_0, \mu_1) \ast \cdots \ast \E(\mu_{n-1}, \mu_{n}) \ast \E(\mu_{n}, \mu_{n+1})$.  Furthermore, if $p \Vdash \dot q_1 \leq \sigma_n(\check q_0)$, then there is $q_2 \leq q_0$ such that
$p \Vdash \sigma_n(\check q_2) = \dot q_1$.

For a condition $r = \langle p,q_0,q_1,\dots \rangle$ in the infinite product we define $\pi(r) = \langle p, \sigma_0(q_0),\sigma_1(q_1),\dots \rangle$.  To verify that $\pi$ is a projection, suppose $\langle p',q'_0,q'_1,\dots \rangle \leq \langle p, \sigma_0(q_0),\sigma_1(q_1),\dots \rangle.$  For each $n$, there is $q''_n \leq q_n$ such that $\langle p',q'_0,\dots,q'_{n-1} \rangle \Vdash \sigma_n(q''_n) = q'_n$.  An easy induction argument shows that $\langle p', \sigma_0(q''_0),\sigma_1(q''_1),\dots \rangle \leq \langle p',q'_0,q'_1,\dots \rangle$.
\end{proof}

As the product is $\mu_0$-closed in the ground model, it is $\mu_0$-distributive in the generic extension by the $\mu_0$-c.c.\ forcing $\Col(\omega, \rho^{+\omega}) \ast \E(\rho^{+\omega+1}, \mu_0)$. Therefore, if $f : \aleph_{\omega+1}^{{<}\omega} \to \aleph_\omega$ is in the extension by the iteration, the $\aleph_1$-sized witness for Chang's conjecture with respect to $f$ already exists in the extension by the iteration.
\end{proof}

\section{Open Questions}
We conclude this paper with a list of open questions:
\begin{question} Is it consistent, relative to large cardinals, that for every pair of cardinals $\kappa < \lambda$ such that $\kappa < \cf \lambda$ or $\cf \kappa = \cf \lambda$, \[(\lambda^+, \lambda)\chang (\kappa^+,\kappa)?\]
\end{question}
In the model of Section~\ref{sec: global cc} we gave a positive answer to this question when restricting $\lambda$ to be a successor cardinal.

\begin{question}
What is the consistency strength of $(\aleph_4, \aleph_3) \chang (\aleph_2, \aleph_1)$?
\end{question}
In Section~\ref{sec: local cc} we gave an upper bound of $(+2)$-subcompact cardinal. The known lower bound, due to Levinski is $0^\dagger$, \cite{Levinski1984}.
\providecommand{\bysame}{\leavevmode\hbox to3em{\hrulefill}\thinspace}
\providecommand{\MR}{\relax\ifhmode\unskip\space\fi MR }
\providecommand{\MRhref}[2]{%
  \href{http://www.ams.org/mathscinet-getitem?mr=#1}{#2}
}
\providecommand{\href}[2]{#2}


\begin{thebibliography}{10}

\bibitem{AbrahamMagidor2010}
Uri Abraham and Menachem Magidor, \emph{Cardinal arithmetic}, Handbook of set
  theory. {V}ols. 1, 2, 3, Springer, Dordrecht, 2010, pp.~1149--1227.
  \MR{2768693}

\bibitem{DiagonalSCPrikry}
Omer Ben-Neria, Chris Lambie-Hanson, and Spencer Unger, \emph{Diagonal
  supercompact {R}adin forcing}, arXiv preprint arXiv:1604.01564 (2016).

\bibitem{BrookeTaylorFriedman2013}
Andrew~D. Brooke-Taylor and Sy-David Friedman, \emph{Subcompact cardinals,
  squares, and stationary reflection}, Israel J. Math. \textbf{197} (2013),
  no.~1, 453--473. \MR{3096624}

\bibitem{ChangKeisler1990}
C.~C. Chang and H.~J. Keisler, \emph{Model theory}, third ed., Studies in Logic
  and the Foundations of Mathematics, vol.~73, North-Holland Publishing Co.,
  Amsterdam, 1990. \MR{1059055 (91c:03026)}

\bibitem{ErdosRado1956}
P.~Erd{\"o}s and R.~Rado, \emph{A partition calculus in set theory}, Bull.
  Amer. Math. Soc. \textbf{62} (1956), 427--489. \MR{0081864}

\bibitem{Faubion2012}
Zachary Faubion, \emph{Improving the consistency strength of stationary set
  reection at aleph omega+ 1}, University of California, Irvine, 2012.

\bibitem{Foreman1983}
Matthew Foreman, \emph{More saturated ideals}, Cabal seminar 79--81, Lecture
  Notes in Math., vol. 1019, Springer, Berlin, 1983, pp.~1--27. \MR{730584}

\bibitem{Foreman2009}
\bysame, \emph{Smoke and mirrors: combinatorial properties of small cardinals
  equiconsistent with huge cardinals}, Adv. Math. \textbf{222} (2009), no.~2,
  565--595. \MR{2538021}

\bibitem{Foreman2010ideals}
\bysame, \emph{Ideals and generic elementary embeddings}, Handbook of set
  theory, Springer, 2010, pp.~885--1147.

\bibitem{ForemanMagidor1997}
Matthew Foreman and Menachem Magidor, \emph{A very weak square principle}, J.
  Symbolic Logic \textbf{62} (1997), no.~1, 175--196. \MR{1450520}

\bibitem{Gitik2010}
Moti Gitik, \emph{Prikry-type forcings}, Handbook of set theory. {V}ols. 1, 2,
  3, Springer, Dordrecht, 2010, pp.~1351--1447. \MR{2768695}

\bibitem{HayutMagidorMalitz}
Yair Hayut, \emph{Magidor-{M}alitz reflection}, submitted (2015).

\bibitem{Jech2003}
Thomas Jech, \emph{Set theory}, Springer Monographs in Mathematics,
  Springer-Verlag, Berlin, 2003, The third millennium edition, revised and
  expanded. \MR{1940513}

\bibitem{Kanamori2009}
Akihiro Kanamori, \emph{The higher infinite}, second ed., Springer Monographs
  in Mathematics, Springer-Verlag, Berlin, 2009, Large cardinals in set theory
  from their beginnings, Paperback reprint of the 2003 edition. \MR{2731169}

\bibitem{Levinski1984}
Jean-Pierre Levinski, \emph{Instances of the conjecture of {C}hang}, Israel J.
  Math. \textbf{48} (1984), no.~2-3, 225--243. \MR{770703}

\bibitem{LevinskiMagidorShelah1990}
Jean-Pierre Levinski, Menachem Magidor, and Saharon Shelah, \emph{Chang's
  conjecture for {$\aleph_\omega$}}, Israel J. Math. \textbf{69} (1990), no.~2,
  161--172. \MR{1045371}

\bibitem{Magidor1971}
Menachem Magidor, \emph{On the role of supercompact and extendible cardinals in
  logic}, Israel J. Math. \textbf{10} (1971), 147--157. \MR{0295904 (45
  \#4966)}

\bibitem{MagidorMalitz1977}
Menachem Magidor and Jerome Malitz, \emph{Compact extensions of {$L(Q)$}.
  {I}a}, Ann. Math. Logic \textbf{11} (1977), no.~2, 217--261. \MR{0453484 (56
  \#11746)}

\bibitem{Radin1982}
Lon~Berk {R}adin, \emph{Adding closed cofinal sequences to large cardinals},
  Annals of Mathematical Logic \textbf{22} (1982), no.~3, 243--261.

\bibitem{Sakai2005}
Hiroshi Sakai, \emph{Semiproper ideals}, Fundamenta Mathematicae \textbf{186}
  (2005), no.~3, 251--267.

\bibitem{Shioya2011}
Masahiro Shioya, \emph{The {E}aston collapse and a saturated filter}, {RIMS}
  K\^{o}ky\^{u}roku \textbf{1754} (2011), 108--114.

\end{thebibliography}
\end{document}